
\documentclass[12pt, reqno]{amsart}

\usepackage{fullpage}
\usepackage{amssymb}    
\usepackage{latexsym}   
\usepackage{url}        
\usepackage{verbatim}   
\usepackage{array}      
\usepackage{enumitem}   

\theoremstyle{plain}
\newtheorem{theorem}{Theorem}[section]
\newtheorem{lemma}[theorem]{Lemma}
\newtheorem{proposition}[theorem]{Proposition}
\newtheorem{corollary}[theorem]{Corollary}

\newtheorem{problem}[theorem]{Problem}


\theoremstyle{remark}
\newtheorem{example}[theorem]{Example}

\newtheorem*{acknowledgment}{Acknowledgment}

\numberwithin{equation}{section}

\newcommand{\seclabel}[1]{\label{sec:#1}}   
\newcommand{\thmlabel}[1]{\label{thm:#1}}   
\newcommand{\lemlabel}[1]{\label{lem:#1}}   
\newcommand{\corlabel}[1]{\label{cor:#1}}   
\newcommand{\prplabel}[1]{\label{prp:#1}}   
\newcommand{\exmlabel}[1]{\label{exm:#1}}   
\newcommand{\prblabel}[1]{\label{prb:#1}}   
\newcommand{\eqnlabel}[1]{\label{eqn:#1}}   

\newcommand{\secref}[1]{\ref{sec:#1}}   
\newcommand{\thmref}[1]{\ref{thm:#1}}   
\newcommand{\lemref}[1]{\ref{lem:#1}}   
\newcommand{\corref}[1]{\ref{cor:#1}}   
\newcommand{\prpref}[1]{\ref{prp:#1}}   
\newcommand{\prbref}[1]{\ref{prb:#1}}   
\newcommand{\eqnref}[1]{\eqref{eqn:#1}} 


\newcommand{\Aut}{\mathrm{Aut}}		
\newcommand{\Sym}{\mathrm{Sym}}     
\newcommand{\Mlt}{\mathrm{Mlt}}     
\newcommand{\LMlt}{\Mlt_{\lambda}}     
\newcommand{\Inn}{\mathrm{Inn}}		
\newcommand{\Soc}{\mathrm{Soc}}     
\newcommand{\Dih}{\mathrm{Dih}}     


\newcommand{\setof}[2]{\{#1\,|\,#2\}}   

\newcommand{\byeqn}[1]{\overset{\eqnref{#1}}=}  

%
\newcommand{\ldiv}{\backslash}                    
\newcommand{\rdiv}{\slash}                        
\newcommand{\inv}{^{-1}}                        
\newcommand{\sbl}[1]{\langle#1\rangle}      
\newcommand{\normal}{\mathop{\unlhd}}       
\newcommand{\backnormal}{\mathop{\unrhd}}   
\newcommand{\propnormal}{\lhd}              
\newcommand{\Char}{\mathop{\mathrm{char}}}  


\newcommand{\ad}{\mathrm{ad}}
\newcommand{\id}{\mathrm{id}}
\newcommand{\dia}{\diamond}


\title{The Structure of Automorphic Loops}

\author[M. K. Kinyon]{Michael K. Kinyon}
\address{Department of Mathematics \\
University of Denver \\
Denver, CO 80208 USA}
\email{\url{mkinyon@du.edu}}
\author[K. Kunen]{Kenneth Kunen}
\address{Department of Mathematics \\
University of Wisconsin \\
Madison, WI 57306 USA}
\email{\url{kunen@math.wisc.edu}}
\author[J. D. Phillips]{J. D. Phillips}
\address{Department of Mathematics and Computer Science \\
Northern Michigan University \\
Marquette, Michigan 49855 USA}
\email{\url{jophilli@nmu.edu}}
\author[P. Vojt\v{e}chovsk\'{y}]{Petr Vojt\v{e}chovsk\'{y}$^*$}
\address{Department of Mathematics \\
University of Denver \\
Denver, CO 80208 USA}
\email{\url{petr@math.du.edu}}

\thanks{${}^*$Partially supported by Simons Foundation Collaboration Grant 210176.}

\date{\today}

\subjclass[2000]{20N05} \keywords{Automorphic loop, inner mapping, Odd Order Theorem, Cauchy Theorem, Lagrange Theorem, solvable loop, Bruck loop, Lie ring, middle nuclear extension, dihedral automorphic loop, simple automorphic loop, primitive group}

\begin{document}

\begin{abstract}
Automorphic loops are loops in which all inner mappings are automorphisms. This variety of loops includes, for instance, groups and commutative Moufang loops.

We study uniquely $2$-divisible automorphic loops, particularly automorphic loops of odd order, from the point of view of the associated Bruck loops (motivated by Glauberman's work on uniquely $2$-divisible Moufang loops) and the associated Lie rings (motivated by a construction of Wright). We prove that every automorphic loop $Q$ of odd order is solvable, contains an element of order $p$ for every prime $p$ dividing $|Q|$, and $|S|$ divides $|Q|$ for every subloop $S$ of $Q$.

There are no finite simple nonassociative commutative automorphic loops, and there are no finite simple nonassociative automorphic loops of order less than $2500$. We show that if $Q$ is a finite simple nonassociative automorphic loop then the socle of the multiplication group of $Q$ is not regular. The existence of a finite simple nonassociative automorphic loop remains open.

Let $p$ be an odd prime. Automorphic loops of order $p$ or $p^2$ are groups, but there exist nonassociative automorphic loops of order $p^3$, some with trivial nucleus (center) and of exponent $p$. We construct nonassociative ``dihedral'' automorphic loops of order $2n$ for every $n>2$, and show that there are precisely $p-2$ nonassociative automorphic loops of order $2p$, all of them dihedral.
\end{abstract}

\maketitle

\section{Introduction}
\seclabel{intro}

A \emph{loop} $(Q,\cdot)$ is a set $Q$ with a binary operation $\cdot : Q\times Q\to Q$ such that (i) $(Q,\cdot)$ is a \emph{quasigroup}, that is, for each $a$, $b\in Q$, the equations $ax=b$ and $ya=b$ have unique solutions $x$, $y\in Q$, and (ii) there exists a neutral element $1\in Q$ such that $1x = x1 = x$ for all $x\in Q$. Equivalently, a loop can be viewed as having three binary operations $\cdot, \ldiv, \rdiv$ satisfying the identities $x\ldiv (xy) = y$, $x(x\ldiv y) = y$, $(xy)\rdiv y = x$, $(x\rdiv y)y = x$, $x\rdiv x = y\ldiv y$. Basic references for loop theory are \cite{Bruck, Pflugfelder}.

For $a\in Q$, the \emph{right translation} and \emph{left translation} by $a$ are the bijections $R_a :Q\to Q; x\mapsto xa$ and $L_a :Q\to Q;x\mapsto ax$. These generate the \emph{multiplication group} $\Mlt(Q) = \sbl{R_x, L_x\ |\ x\in Q}$. The \emph{inner mapping group} is the subgroup stabilizing the neutral element, $\Inn(Q) = (\Mlt(Q))_1$.

A loop is \emph{automorphic} (or an \emph{A-loop}) if every inner mapping is an automorphism, that is, if $\Inn(Q) \leq \Aut(Q)$. The study of automorphic loops began in 1956 with Bruck and Paige \cite{BP}. They were particularly interested in \emph{diassociative} automorphic loops, that is, loops in which each $2$-generated subloop is a group. They noted that such loops share many properties with Moufang loops. Shortly thereafter, Osborn showed that commutative diassociative automorphic loops are Moufang \cite{Osborn}. More results showing the Moufang nature of diassociative, automorphic loops were found in \cite{Drapal} and \cite[Thm. 5]{Wright1}. The general case was finally settled in \cite{KKP}: Every diassociative automorphic loop is a Moufang loop.

In recent years, a detailed structure theory has emerged for \emph{commutative} automorphic loops. For instance, the Odd Order, Lagrange and Cauchy Theorems hold for commutative automorphic loops, a finite commutative automorphic loop has order $p^k$ if and only if each element has order a power of $p$, and a finite commutative automorphic loop decomposes as a direct product of a loop of odd order and a loop of order a power of $2$ \cite{JKV1}; there are no finite simple nonassociative commutative automorphic loops \cite{GKN}; for an odd prime $p$, if $Q$ is a finite commutative automorphic $p$-loop then $\Mlt(Q)$ is a $p$-group and $Q$ is centrally nilpotent \cite{Csorgo1,JKV3}; for an odd prime $p$, commutative automorphic loops of order $p$, $p^2$, $2p$, $2p^2$, $4p$, $4p^2$ are groups \cite{JKV2}.

In this paper we lay foundations for the study of automorphic loops. Our understanding is not yet as complete as in the commutative case, but we obtain several significant results, as described below. For notation and terminology, see Section \secref{preliminaries}.

\subsection{Summary of results}

\S \secref{preliminaries} introduces the notation, definitions, and preliminary results concerned mostly with identities valid in automorphic loops.

\S\S \secref{cores}, \secref{bruck}: Motivated by work of Glauberman, we first study certain derived operations on automorphic loops. In \cite{Gl1,Gl2} Glauberman showed that Bruck loops of odd order are solvable and satisfy the Cauchy, Lagrange and Sylow Theorems. He also constructed a Bruck loop $(Q,\circ)$ from a uniquely $2$-divisible Moufang loop $(Q,\cdot)$ by setting $x\circ y = (xy^2x)^{1/2}$, and this  allowed him to transfer the above results from Bruck loops of odd order to Moufang loops of odd order.

We show in three steps that the analog of Glauberman's operation for uniquely $2$-divisible automorphic loops is the operation
\begin{displaymath}
    x\circ y = (x^{-1}\ldiv (y^2x))^{1/2},
\end{displaymath}
which coincides with Glauberman's operation on Moufang loops because Moufang loops are diassociative. First, given any automorphic loop $(Q,\cdot)$, we show that the core $(Q,*)$ defined by
\begin{displaymath}
    x*y = x^{-1}\ldiv (y^{-1}x).
\end{displaymath}
is an involutive quandle. Second, using the core, we show that the set $P_Q = \{P_x\ |\ x\in Q\}$ with $P_x = R_xL_{x^{-1}}^{-1}$ is a twisted subgroup of $\Mlt(Q)$, satisfying $P_xP_yP_y = P_{yP_x}$ and $P_x^n = P_{x^n}$. As is well known, on a uniquely $2$-divisible twisted subgroup $(T,\cdot)$ one can define a Bruck loop $(T,\bullet)$ by
\begin{displaymath}
    x\bullet y = (xy^2x)^{1/2}.
\end{displaymath}
Hence, if $Q$ is uniquely $2$-divisible, $(P_Q,\bullet)$ is a twisted subgroup. Third, the operation $\bullet$ can be transferred from $P_Q$ onto $Q$, yielding the associated Bruck loop $(Q,\circ)$.

A finite automorphic loop is uniquely $2$-divisible if and only if it is of odd order. The above discussion therefore applies to automorphic loops of odd order, and then results of Glauberman on Bruck loops lead to the Lagrange and Cauchy (but not Sylow) Theorems for automorphic loops of odd order.

\S\S \secref{correspondence}, \secref{solvability}: The next ingredient is based on Wright's construction of loops from algebras. Specializing it to a Lie ring $(Q,+,[\cdot,\cdot])$, we can define $(Q,\diamond)$ by
\begin{displaymath}
    x\diamond y = x+y-[x,y].
\end{displaymath}
Then $(Q,\diamond)$ is a loop if and only if in $(Q,+,[\cdot,\cdot])$ the mappings
\begin{equation}\label{Eq:Wright}
    y\mapsto y\pm [y,x]\text{ are invertible for every }x\in Q.\tag{$W_1$}
\end{equation}
Moreover, if $(Q,+,[\cdot,\cdot])$ is a Lie ring satisfying \eqref{Eq:Wright}, then a sufficient condition for $(Q,\diamond)$ to be automorphic is that
\begin{equation}\label{Eq:WrightAutomorphic}
    [[Q,x],[Q,x]]=1\text{ for every $x\in Q$.}\tag{$W_2$}
\end{equation}
In the uniquely $2$-divisible case we obtain the following correspondence: If $(Q,+,[\cdot,\cdot])$ is a uniquely $2$-divisible Lie ring satisfying \eqref{Eq:Wright} and \eqref{Eq:WrightAutomorphic}, then $(Q,\diamond)$ is a uniquely $2$-divisible automorphic loop whose associated Bruck loop $(Q,\circ)$ is an abelian group. Conversely, if $(Q,\cdot)$ is a uniquely $2$-divisible automorphic loop whose associated Bruck loop $(Q,\circ)$ is an abelian group, then $(Q,\circ,[\cdot,\cdot])$ defined by
\begin{displaymath}
    [x,y]=x\circ y\circ (xy)^{-1}
\end{displaymath}
(the inverses in $(Q,\cdot)$ and $(Q,\circ)$ coincide) is a Lie ring satisfying \eqref{Eq:Wright} and \eqref{Eq:WrightAutomorphic}. Moreover, the two constructions are inverse to each other, subrings (resp. ideals) of the Lie ring are subloops (resp. normal subloops) of the automorphic loop, and subloops (resp. normal subloops) of the automorphic loop closed under square roots are subrings (resp. ideals) of the Lie ring.

Taking advantage of the associated Lie rings, we prove the Odd Order Theorem for automorphic loops, we show that automorphic loops of order $p^2$ are groups, and we give examples of automorphic loops of order $p^3$ with trivial nucleus.

\S \secref{simplicity}: Next we investigate finite simple automorphic loops. Since a loop $Q$ is simple if and only if $\Mlt(Q)$ is a primitive permutation group on $Q$, we approach the problem from the direction of primitive groups. In \cite{JKNV} we proved computationally, using the library of primitive groups in GAP, that a finite simple automorphic loop of order less than $2500$ is associative. Here we show that if $Q$ is a finite simple nonassociative automorphic loop then the socle of $\Mlt(Q)$ is not regular, hence, by the O'Nan-Scott theorem, $\Mlt(Q)$ is of almost simple type, of diagonal type or of product type. Whether such a loop exists remains open.

We also prove that characteristically simple automorphic loops behave analogously to characteristically simple groups.

\S\S \secref{mid_nuc}, \secref{dihedral}:
We conclude the paper with a short discussion of middle nuclear extensions and, as an application, with constructions of generalized dihedral automorphic loops. Namely, if $(A,+)$ is an abelian group and $\alpha\in\mathrm{Aut}(A)$ then $\mathbb Z_2\times A$ with multiplication $(i,u)(j,v) = (i + j, ((-1)^ju + v)\alpha^{ij})$ is an automorphic loop. In particular, if $A=\mathbb Z_n$ and $c$ is an invertible element of $\mathbb Z_n$, then $\mathbb Z_2\times \mathbb Z_n$ with multiplication $(i,u)(j,v) = (i + j, ((-1)^ju + v)c^{ij})$ is a dihedral automorphic loop. We show that two such loops are isomorphic if and only if the invertible elements coincide, and we calculate the automorphism groups of these loops.

Cs\"org\H{o} showed in \cite{Csorgo2} that if $Q$ is a finite automorphic loop and $x\in Q$ then $|x|$ divides $|Q|$. This allows us to classify all automorphic loops of order $2p$. There are $p$ such loops up to isomorphism; these are precisely the dihedral automorphic loops corresponding to the $p-1$ invertible elements of $\mathbb Z_p$, and the cyclic group $\mathbb Z_{2p}$.

\section{Preliminaries}
\seclabel{preliminaries}

The inner mapping group $\Inn(Q)$ has a standard set of generators \cite{Bruck}:
\[
R_{x,y} = R_x R_y R_{xy}\inv\,,
\qquad
T_x = R_x L_x\inv\,,
\qquad
L_{x,y} = L_x L_y L_{yx}\inv\,.
\]
Thus automorphic loops can be characterized equationally.

\begin{proposition}[\cite{BP}]
\prplabel{basic}
A loop $Q$ is an automorphic loop if and only if, for all $x$, $y$, $u$, $v\in Q$,
\begin{align*}
(uv)R_{x,y} &= uR_{x,y}\cdot vR_{x,y},   \tag{$A_r$} \\
(uv)L_{x,y} &= uL_{x,y}\cdot vL_{x,y},   \tag{$A_l$} \\
(uv)T_x &= uT_x\cdot vT_x.               \tag{$A_m$}
\end{align*}
\end{proposition}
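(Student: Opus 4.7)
The plan is to reduce the claim to the already-cited fact that the three families $\{R_{x,y}\}$, $\{L_{x,y}\}$, $\{T_x\}$ generate $\Inn(Q)$, and then observe that the stated identities simply assert that these specific generators are homomorphisms.

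For the forward direction, assume $Q$ is automorphic, so $\Inn(Q)\leq\Aut(Q)$. Since each $R_{x,y}$, $L_{x,y}$, $T_x$ is by definition a specific element of $\Inn(Q)$, each is an automorphism. Writing out what it means for one of these bijections to be a homomorphism yields precisely the identities $(A_r)$, $(A_l)$, $(A_m)$. Nothing more needs to be done on this side.

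For the reverse direction, assume the three identities hold for all $x,y,u,v\in Q$. Then by the same translation between homomorphism identities and the defining property of automorphisms, every $R_{x,y}$, $L_{x,y}$, $T_x$ is an automorphism of $Q$. These maps are each already known to lie in $\Inn(Q)$, so we have a set $S\subseteq \Inn(Q)\cap\Aut(Q)$. By the standard result of Bruck cited just before the proposition, $S$ generates $\Inn(Q)$. Since $\Aut(Q)$ is a subgroup of $\Sym(Q)$, the subgroup $\langle S\rangle$ generated by $S$ is contained in $\Aut(Q)$; hence $\Inn(Q)=\langle S\rangle\leq\Aut(Q)$, which is the definition of an automorphic loop.

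The only nontrivial ingredient is Bruck's generation theorem, which is quoted from the literature, so no real obstacle arises. The proof is essentially a two-line unpacking of the definitions once one has the standard generating set of $\Inn(Q)$ in hand.
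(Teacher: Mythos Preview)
Your proposal is correct and matches the paper's intended reasoning exactly: the paper does not write out a proof of this proposition at all, merely citing \cite{BP} and preceding the statement with ``Thus automorphic loops can be characterized equationally'' right after recalling Bruck's standard generating set for $\Inn(Q)$. Your unpacking of that ``Thus'' is precisely what is meant.
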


This means that automorphic loops form a variety in the sense of universal algebra. In particular, subloops and factor loops of automorphic loops are automorphic \cite[Thm. 2.2]{BP}.

A loop $Q$ is \emph{power-associative} if for each $x\in Q$, $\sbl{x}$ is a group. In particular, powers of $x$ are unambiguous, and $x^m x^n = x^{m+n}$ for all $m$, $n\in \mathbb{Z}$.

\begin{proposition} [\text{\cite[Thm. 2.4]{BP}}]
\prplabel{power-associative}
Every automorphic loop is power-associative.
\end{proposition}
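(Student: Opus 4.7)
The plan is to leverage the automorphic property to show that for each $x \in Q$ the subloop $\langle x\rangle$ is a cyclic group. The starting point is that $T_x = R_x L_x^{-1}$ is an automorphism of $Q$ which fixes both $1$ and $x$ (since $(x)T_x = x\ldiv(xx) = x$), and the fixed-point set of any loop automorphism is a subloop. Hence $T_x$ restricts to the identity on $\langle x\rangle$, and unfolding $(y)T_x = x\ldiv(yx) = y$ yields $xy = yx$ for every $y \in \langle x\rangle$; in particular, $L_x$ and $R_x$ coincide as permutations of $\langle x\rangle$. Defining $x^n := (1)L_x^n$ for $n \geq 0$ we immediately obtain the recursion $x^{n+1} = x \cdot x^n = x^n \cdot x$. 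Applying the same fixed-point argument to $T_x$ at the left inverse $x^{\lambda}$ shows that the left and right inverses of $x$ coincide, so $x^{-1}$ is unambiguous and the definition extends to all $n \in \mathbb{Z}$.

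The next step is to prove by induction on $n \geq 0$ that $L_{x^n} = L_x^n$ as permutations of $\langle x\rangle$. For the inductive step the key tool is the inner mapping
\[
L_{x, x^n} \;=\; L_x L_{x^n} L_{x^{n+1}}^{-1},
\]
which is an automorphism by Proposition~\prpref{basic}. Unwinding definitions gives $(x)L_{x, x^n} = x^{n+1}\ldiv(x^n \cdot x^2)$. The inductive hypothesis applied to $x^2 \in \langle x\rangle$ yields $x^n \cdot x^2 = L_x^n(x^2) = x^{n+2}$, while the recursion gives $x^{n+1} \cdot x = x^{n+2}$, so $(x)L_{x, x^n} = x$. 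The fixed-point principle then forces $L_{x, x^n}$ to act as the identity on $\langle x\rangle$, which rearranges to $L_{x^{n+1}} = L_x L_{x^n} = L_x^{n+1}$ on $\langle x\rangle$, closing the induction.

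With this claim in hand, $x^m \cdot x^n = L_{x^m}(x^n) = L_x^m(x^n) = x^{m+n}$ for all $m, n \geq 0$, and the analogous argument applied to $x^{-1}$ extends the identity to all integer exponents. Consequently $\langle x\rangle = \{x^n : n \in \mathbb{Z}\}$ is a cyclic group, and so $Q$ is power-associative.

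The main obstacle I anticipate is the inductive step above: the whole argument hinges on $L_{x, x^n}$ fixing $x$, and establishing this requires carefully dovetailing the inductive hypothesis $L_x^n = L_{x^n}|_{\langle x\rangle}$ with the recursion $x^{k+1} = x\cdot x^k = x^k\cdot x$ from the first step. A minor but essential auxiliary point is the fact that the fixed-point set of any loop automorphism is a subloop, which follows routinely from the equational description of loops.
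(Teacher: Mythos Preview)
Your proof is correct and follows essentially the original Bruck--Paige argument that the paper cites (the paper itself gives no proof of this proposition, only the reference \cite{BP}): the key device in both is that the fixed-point set of an inner automorphism is a subloop, so once $T_x$ and $L_{x,x^n}$ are seen to fix $x$, they act trivially on all of $\langle x\rangle$. The one place worth making slightly more explicit is the passage to negative exponents: the inner automorphism $L_{x^{-1},x} = L_{x^{-1}}L_x L_{x\cdot x^{-1}}^{-1} = L_{x^{-1}}L_x$ fixes $x$ (since $x(x^{-1}x)=x\cdot 1 = x$), hence is the identity on $\langle x\rangle$, giving $L_{x^{-1}} = L_x^{-1}$ there; your induction applied to $x^{-1}$ then yields $L_{x^{-n}} = L_x^{-n}$ on $\langle x\rangle$, and the full power law $x^m x^n = x^{m+n}$ for all $m,n\in\mathbb Z$ follows.
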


We will use the power-associativity of automorphic loops without explicitly referring to Proposition \prpref{power-associative}.

\begin{proposition}[\text{\cite[Thm. 2.5]{BP}}]
\prplabel{commutes}
Let $Q$ be an automorphic loop. Then the following hold for all $x\in Q$, $j$, $k$, $m$, $n\in \mathbb{Z}$.
\begin{align}
L_{x^m}^j L_{x^n}^k &= L_{x^n}^k L_{x^m}^j,  \eqnlabel{LLLL} \\
R_{x^m}^j L_{x^n}^k &= L_{x^n}^k R_{x^m}^j,  \eqnlabel{RLLR} \\
R_{x^m}^j R_{x^n}^k &= R_{x^n}^k R_{x^m}^j.  \eqnlabel{RRRR}
\end{align}
\end{proposition}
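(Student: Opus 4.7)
Since $\langle x\rangle$ is a cyclic group by Proposition \prpref{power-associative}, $x^a x^b = x^{a+b}$ for all $a,b\in\mathbb{Z}$. The central observation is: for every $a, b\in\mathbb{Z}$, the inner mappings $T_{x^a}$, $L_{x^a,x^b}$, $R_{x^a,x^b}$ are automorphisms of $Q$ (by Proposition \prpref{basic}) that fix $\langle x\rangle$ pointwise, the latter by direct calculation from $x^a x^b = x^{a+b}$. Since any $\varphi\in\Aut(Q)$ fixing an element $u$ satisfies $\varphi\inv L_u\varphi = L_u$ and $\varphi\inv R_u\varphi = R_u$, each of these three families of inner mappings commutes with every $L_{x^c}$ and every $R_{x^c}$.

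To prove \eqnref{LLLL}, rearrange $L_{x^{n-1}}L_x = L_{x^{n-1},x}L_{x^n}$ into $L_{x^n} = L_{x^{n-1},x}\inv L_{x^{n-1}}L_x$ and iterate to obtain $L_{x^n} = A_n L_x^n$ for $n\geq 1$, with $A_n = L_{x^{n-1},x}\inv L_{x^{n-2},x}\inv\cdots L_{x,x}\inv$; a parallel computation starting from $L_{x^{-1}} = L_{x^{-1},x}L_x\inv$ handles $n\leq 0$. Each factor of $A_n$ is an inner automorphism fixing $\langle x\rangle$ pointwise, and so commutes with $L_x^k$ for every $k$. Consequently
\[
L_{x^m}L_{x^n} = A_mL_x^m\,A_nL_x^n = A_mA_n L_x^{m+n},\qquad L_{x^n}L_{x^m} = A_nA_m L_x^{m+n}.
\]
Thus \eqnref{LLLL} reduces to $A_mA_n = A_nA_m$, which follows once any two factors $L_{x^i,x^j}$ are shown to commute. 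But $L_{x^{i'},x^{j'}}$ commutes with every $L_{x^k}$ by the central observation, hence with the product $L_{x^i,x^j} = L_{x^i}L_{x^j}L_{x^{i+j}}\inv$. Raising to integer powers $j$ and $k$ preserves this commutation.

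Given \eqnref{LLLL}, the remaining relations follow quickly. Substituting $T_{x^n} = R_{x^n}L_{x^n}\inv$ into $T_{x^n}L_{x^m} = L_{x^m}T_{x^n}$ (valid by the central observation) and using \eqnref{LLLL} to pass $L_{x^n}\inv$ through $L_{x^m}$ yields $R_{x^n}L_{x^m} = L_{x^m}R_{x^n}$, i.e.\ \eqnref{RLLR}. Substituting similarly into $T_{x^n}R_{x^m} = R_{x^m}T_{x^n}$ and applying \eqnref{RLLR} yields \eqnref{RRRR}. The main obstacle is the bootstrap for \eqnref{LLLL}: the factorization $L_{x^n} = A_nL_x^n$ isolates the failure of associativity of the $\langle x\rangle$-action on $Q$ into inner automorphisms fixing $\langle x\rangle$, which by the central observation must commute among themselves, and this is what unlocks the whole proposition.
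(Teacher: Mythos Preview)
The paper does not supply its own proof of this proposition; it simply cites \cite[Thm.~2.5]{BP}. Your argument is correct and is essentially the standard one: the key point, as you identify, is that the inner mappings $T_{x^a}$, $L_{x^a,x^b}$, $R_{x^a,x^b}$ are automorphisms fixing $\langle x\rangle$ pointwise, hence commute with every $L_{x^c}$ and $R_{x^c}$. Your factorization $L_{x^n}=A_nL_x^n$ with $A_n$ a word in such inner mappings, together with the observation that any two such words commute (since each factor commutes with every $L_{x^k}$ and hence with any product $L_{x^i}L_{x^j}L_{x^{i+j}}^{-1}$), cleanly gives \eqnref{LLLL}; the deductions of \eqnref{RLLR} and \eqnref{RRRR} from $T_{x^n}=R_{x^n}L_{x^n}^{-1}$ are correct as written. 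The treatment of negative $n$ via the downward recursion $L_{x^n}=L_{x^n,x}L_{x^{n+1}}L_x^{-1}$ is terse but valid, since each $A_n$ still commutes with $L_x$ and hence the form $L_{x^n}=A_nL_x^n$ persists.
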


\begin{corollary}
\corlabel{RLxxRLxx}
For all $x$ in an automorphic loop $Q$,
\begin{align}
L_{x,x\inv} &= L_{x\inv,x}, \eqnlabel{LxxLxx} \\
R_{x,x\inv} &= R_{x\inv,x}. \eqnlabel{RxxRxx}
\end{align}
\end{corollary}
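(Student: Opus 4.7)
The plan is to unfold the definitions of $L_{x,y}$ and $R_{x,y}$ at the specific arguments $(x,x^{-1})$ and $(x^{-1},x)$, use power-associativity (Proposition \prpref{power-associative}) to collapse the third factor to the identity, and then apply Proposition \prpref{commutes} to swap the remaining two factors.

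More concretely, for \eqnref{LxxLxx}, I would first observe that $x^{-1} \cdot x = 1 = x \cdot x^{-1}$ by power-associativity, so that $L_{x^{-1} \cdot x} = L_{x \cdot x^{-1}} = L_1 = \id$. Substituting into the standard generators gives $L_{x,x^{-1}} = L_x L_{x^{-1}}$ and $L_{x^{-1},x} = L_{x^{-1}} L_x$. The desired equality $L_x L_{x^{-1}} = L_{x^{-1}} L_x$ is then exactly the case $m=1$, $n=-1$, $j=k=1$ of \eqnref{LLLL}. The argument for \eqnref{RxxRxx} is identical, using \eqnref{RRRR} in place of \eqnref{LLLL}.

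There is no real obstacle here: the corollary is essentially a one-line consequence of Proposition \prpref{commutes} once one notices that the ``correction term'' $L_{yx}^{-1}$ (respectively $R_{xy}^{-1}$) in the definition of the inner mapping vanishes because $x$ and $x^{-1}$ multiply to the neutral element. The only content worth recording is that power-associativity is what licenses the identification $x \cdot x^{-1} = x^{-1}\cdot x = 1$ in a loop that is a priori not associative.
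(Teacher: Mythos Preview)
Your argument is correct and is exactly the intended one: the corollary is stated in the paper without proof precisely because it follows immediately from Proposition~\prpref{commutes} in the way you describe, by collapsing $L_{x\cdot x\inv} = L_{x\inv\cdot x} = L_1 = \id_Q$ (and similarly for $R$) and then invoking \eqnref{LLLL} and \eqnref{RRRR}.
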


A loop $Q$ is said to have the \emph{antiautomorphic inverse property} (AAIP) if it has two-sided inverses and satisfies the identity
\[
(xy)\inv = y\inv x\inv  \tag{AAIP}
\]
for all $x$, $y\in Q$. It is also useful to characterize the AAIP in terms of translations and the \emph{inversion mapping} $J: Q\to Q; x\mapsto x\inv$ as either of the following:
\begin{align}
R_x^J &= L_{x\inv}, \eqnlabel{aaip1} \\
L_x^J &= R_{x\inv}. \eqnlabel{aaip2}
\end{align}

\begin{proposition}[\text{\cite[Coro. 6.6]{JKNV}}]
\prplabel{AAIP}
Every automorphic loop has the AAIP.
\end{proposition}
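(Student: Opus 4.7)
My plan is to establish $R_x^J = L_{x^{-1}}$ for every $x \in Q$, which by \eqnref{aaip1} is equivalent to the AAIP. The engine of the argument is the observation that every inner mapping is an automorphism of $(Q,\cdot)$ by Proposition \prpref{basic}, and since $Q$ is power-associative and so carries two-sided inverses (Proposition \prpref{power-associative}), every automorphism of $Q$ preserves inverses; that is, $\sigma J = J\sigma$ for every $\sigma \in \Aut(Q)$. Consequently $J$ commutes with every element of $\Inn(Q)$.

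Applied to $T_x \in \Inn(Q)$, this gives the ``conjugation preserves inverses'' identity $(x\ldiv (yx))^{-1} = x\ldiv (y^{-1}x)$, equivalently: whenever $yx = xb$, also $y^{-1}x = xb^{-1}$. Applied to $L_{x,x^{-1}}$ and $R_{x^{-1},x}$, which by Corollary \corref{RLxxRLxx} reduce to $L_xL_{x^{-1}}$ and $R_{x^{-1}}R_x$, it yields the further identities
\[
[x^{-1}(xy)]^{-1} = x^{-1}(xy^{-1}) \qquad \text{and} \qquad [(yx^{-1})x]^{-1} = (y^{-1}x^{-1})x,
\]
which by the commuting relations \eqnref{LLLL} and \eqnref{RRRR} of Proposition \prpref{commutes} may also be written as $[x(x^{-1}y)]^{-1} = x(x^{-1}y^{-1})$ and $[(yx)x^{-1}]^{-1} = (y^{-1}x)x^{-1}$.

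The remaining task is to combine these identities to strip off the flanking factors of $x^{\pm 1}$ and arrive at the pure statement $(ab)^{-1} = b^{-1}a^{-1}$. My plan is to substitute a solution of $yx = xb$ (from the conjugation identity) into the other two identities, and exploit the commuting of $L_x$ with $L_{x^{-1}}$ (and of $R_x$ with $R_{x^{-1}}$) together with the flexibility $R_xL_x = L_xR_x$ available from \eqnref{RLLR}, to consolidate and cancel the surplus occurrences of $x$. The main obstacle is precisely this cancellation: a loop admits no free cancellation, so each extraneous $x^{\pm 1}$ must be eliminated through a specific structural identity rather than by fiat. The derivation in \cite[Cor.~6.6]{JKNV} carries out exactly such a chain of substitutions; in the automorphic-loop literature, arguments of this kind are typically short but nonobvious, and are often verified with an automated theorem prover such as Prover9. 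My plan would be to follow (or reverify) that derivation.
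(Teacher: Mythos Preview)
The paper does not prove this proposition; it simply quotes it from \cite[Coro.~6.6]{JKNV}, so there is no in-paper argument to compare against beyond the bare citation.

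Your setup is correct and is indeed the starting point of the cited proof: since every $\sigma\in\Inn(Q)\le\Aut(Q)$ preserves inverses, $J$ centralizes $\Inn(Q)$, and the identities you extract from $T_x$, $L_xL_{x^{-1}}$, $R_{x^{-1}}R_x$ are valid consequences. However, your proposal is not a proof. The actual work---combining these identities and eliminating the flanking factors $x^{\pm 1}$ to reach $(xy)^{-1}=y^{-1}x^{-1}$---is never carried out; you name the obstacle (``a loop admits no free cancellation'') and then write ``My plan would be to follow (or reverify) that derivation,'' deferring to \cite{JKNV}. That is exactly the move the paper itself makes, so at present you have reproduced the citation, not the argument. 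If you want a self-contained proof, you must actually exhibit the chain of substitutions (the relevant Prover9-style derivation in \cite{JKNV} is short but, as you note, nonobvious); otherwise this remains a sketch with a gap at the decisive step.
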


\begin{corollary}
\corlabel{AAIP}
If $Q$ is an automorphic loop, then $J$ normalizes $\Mlt(Q)$ in $\Sym(Q)$.
\end{corollary}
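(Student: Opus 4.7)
The plan is to deduce this corollary essentially directly from Proposition \prpref{AAIP}, which supplies the key identities \eqnref{aaip1} and \eqnref{aaip2}.

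First I would confirm that $J\in\Sym(Q)$ and $J^2=\id$. By Proposition \prpref{power-associative}, every $x\in Q$ has a well-defined two-sided inverse and $(x\inv)\inv=x$, so inversion is a self-inverse bijection of $Q$. In particular, $J\inv=J$.

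Next, to show that $J$ normalizes $\Mlt(Q)$, it suffices to check that conjugation by $J$ sends each generator of $\Mlt(Q)=\sbl{R_x,L_x\mid x\in Q}$ into $\Mlt(Q)$. But this is immediate from Proposition \prpref{AAIP}: the equivalent translation-level forms \eqnref{aaip1} and \eqnref{aaip2} of the AAIP give $R_x^J=L_{x\inv}$ and $L_x^J=R_{x\inv}$, both of which are themselves generators of $\Mlt(Q)$. Hence $J\inv\Mlt(Q)J\subseteq\Mlt(Q)$, and since $J$ is an involution, conjugating both sides by $J$ yields the reverse inclusion.

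There is no real obstacle here; the corollary is essentially a repackaging of the AAIP at the level of the multiplication group. The identities \eqnref{aaip1}, \eqnref{aaip2} were introduced in the preliminaries precisely as the translation-level expression of the AAIP, so the entire proof reduces to observing that $J$-conjugation swaps $R_x$ and $L_{x\inv}$ and noting that this suffices on generators.
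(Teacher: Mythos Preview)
Your proof is correct and follows essentially the same route as the paper: both invoke Proposition \prpref{AAIP} and the translation-level identities \eqnref{aaip1}, \eqnref{aaip2} to see that $J$-conjugation sends generators of $\Mlt(Q)$ to generators. The paper's proof is a single sentence to this effect; your version adds the (harmless) extra care of noting $J^2=\id$ to get both inclusions.
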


\begin{proof}
Since $\Mlt(Q)$ is generated by left translations, this follows from \eqnref{aaip1} and \eqnref{aaip2} in view of Proposition \prpref{AAIP}
\end{proof}

\begin{lemma}
\lemlabel{RxyLxy}
In an automorphic loop $Q$, the following hold for all $x$, $y\in Q$.
\begin{align}
R_{x,y} &= L_{x\inv,y\inv}, \eqnlabel{RxyLxy} \\
T_x\inv &= T_{x\inv}.        \eqnlabel{Tinv}
\end{align}
\end{lemma}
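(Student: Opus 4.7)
The plan is to conjugate by the inversion map $J$ and exploit two complementary facts: the AAIP relations $R_x^J = L_{x^{-1}}$ and $L_x^J = R_{x^{-1}}$ (equations \eqnref{aaip1}--\eqnref{aaip2}, available by Proposition \prpref{AAIP}), and the fact that every automorphism of $Q$ commutes with $J$. The latter is the conceptual hinge of the argument: on a loop with two-sided inverses, any automorphism $\phi$ must send $x^{-1}$ to $\phi(x)^{-1}$ (apply $\phi$ to $x\cdot x^{-1} = 1 = x^{-1}\cdot x$ and use uniqueness of two-sided inverses), so $\phi J = J\phi$. Since $Q$ is automorphic, every inner mapping is an automorphism (Proposition \prpref{basic}), so in particular $JR_{x,y}J = R_{x,y}$ and $JT_xJ = T_x$.

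For \eqnref{RxyLxy}, I would rewrite each factor of $R_{x,y} = R_x R_y R_{xy}^{-1}$ using $R_z = JL_{z^{-1}}J$; the interior $J$'s telescope, and AAIP replaces $(xy)^{-1}$ by $y^{-1}x^{-1}$, yielding
\[
    R_{x,y} \;=\; J\bigl(L_{x^{-1}} L_{y^{-1}} L_{y^{-1}x^{-1}}^{-1}\bigr)J \;=\; J\,L_{x^{-1},y^{-1}}\,J.
\]
Because $L_{x^{-1},y^{-1}}$ is an inner mapping, hence an automorphism, the observation above gives $JL_{x^{-1},y^{-1}}J = L_{x^{-1},y^{-1}}$, and \eqnref{RxyLxy} follows.

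For \eqnref{Tinv}, the same conjugation applied to $T_x = R_x L_x^{-1}$ yields $JT_xJ = L_{x^{-1}}R_{x^{-1}}^{-1}$. Because $T_x$ is an automorphism, $JT_xJ = T_x$, so $T_x = L_{x^{-1}}R_{x^{-1}}^{-1}$. Inverting both sides gives $T_x^{-1} = R_{x^{-1}}L_{x^{-1}}^{-1} = T_{x^{-1}}$.

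The main (and essentially only) obstacle is recognizing that automorphy of inner mappings forces $J$-invariance; this is what upgrades the AAIP-driven $J$-conjugation from a bare calculation into a proof. I do not expect the commutativity identities \eqnref{LLLL}--\eqnref{RRRR} or Corollary \corref{RLxxRLxx} to play any role for this particular lemma.
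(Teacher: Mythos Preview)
Your argument is correct. For \eqnref{RxyLxy} it is essentially identical to the paper's: both conjugate by $J$, use \eqnref{aaip1}--\eqnref{aaip2} on the factors, and invoke the fact that an inner mapping, being an automorphism, is fixed under $J$-conjugation (the paper applies this to $R_{x,y}$, you to $L_{x^{-1},y^{-1}}$, which amounts to the same thing).

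For \eqnref{Tinv} the routes genuinely differ. The paper does \emph{not} reuse the $J$-conjugation trick; instead it computes $T_x T_{x^{-1}} = R_x L_x^{-1} R_{x^{-1}} L_{x^{-1}}^{-1}$, uses the commutation identity \eqnref{RLLR} to rewrite this as $R_{x,x^{-1}} L_{x^{-1},x}^{-1}$, and then applies the freshly proved \eqnref{RxyLxy} to cancel. Your approach---conjugating $T_x$ by $J$ to get $T_x = L_{x^{-1}} R_{x^{-1}}^{-1}$ and then inverting---is more uniform with the first half and, as you anticipated, bypasses Proposition \prpref{commutes} altogether. So your closing remark is right for your own proof but wrong about the paper's: the paper does lean on \eqnref{RLLR} here. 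What your route buys is independence from the power-commutation identities; what the paper's route buys is a display of how \eqnref{RxyLxy} already encodes \eqnref{Tinv}.
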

\begin{proof}
We compute
\[
R_{x,y} = R_{x,y}^J = R_x^J R_y^J (R_{xy}\inv)^J
= L_{x\inv} L_{y\inv} L_{(xy)\inv}\inv = L_{x\inv} L_{y\inv} L_{y\inv x\inv}\inv
= L_{x\inv,y\inv}\,,
\]
where we used $R_{x,y}\in \Aut(Q)$ in the first equality, \eqnref{aaip1} in the third, and (AAIP) in the fourth. This establishes \eqnref{RxyLxy}. For \eqnref{Tinv}, we have
\[
T_x T_{x\inv} = R_x L_x\inv R_{x\inv} L_{x\inv}\inv = R_x R_{x\inv} L_x\inv L_{x\inv}\inv
= R_{x,x\inv} L_{x\inv,x}\inv = R_{x,x\inv} R_{x,x\inv}\inv = \id_Q\,,
\]
where we used \eqnref{RLLR} in the second equality, and \eqnref{RxyLxy} in the fourth.
\end{proof}

To check that a particular loop is automorphic, it is not necessary to verify all of the conditions ($A_r$), ($A_{\ell}$) and ($A_m$):

\begin{proposition}[\text{\cite[Thm. 6.7]{JKNV}}]
\prplabel{halfcheck}
Let $Q$ be a loop satisfying $(A_m)$ and $(A_{\ell})$. Then $Q$ is automorphic.
\end{proposition}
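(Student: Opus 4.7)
The plan is to show that $(A_r)$ follows from $(A_m)$ and $(A_\ell)$ by transferring the automorphism property of $L_{x^{-1},y^{-1}}$ to $R_{x,y}$ via conjugation by the inversion map $J$. The argument rests on three ingredients.

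First, I would verify that the antiautomorphic inverse property follows from $(A_m)$ alone. Since $T_x$ fixes the identity, the assumption that $T_x$ is an automorphism forces $T_x$ to preserve inverses, giving $x\ldiv(u^{-1}x) = (x\ldiv(ux))^{-1}$ for all $x,u\in Q$. The technical heart of the proof, carried out in \cite[Cor.~6.6]{JKNV}, is to manipulate this identity into the form $(xy)^{-1} = y^{-1}x^{-1}$. This is essentially the only part of the argument with nontrivial computational content.

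Second, with AAIP available, I would record the following observation: if $\phi\in\Aut(Q)$, then $J\phi J\in\Aut(Q)$. The verification is routine --- for $u,v\in Q$ one computes
\[
(uv)\cdot J\phi J = \bigl((v^{-1}u^{-1})\phi\bigr)^{-1} = \bigl((v^{-1}\phi)(u^{-1}\phi)\bigr)^{-1} = (u^{-1}\phi)^{-1}(v^{-1}\phi)^{-1} = (u\cdot J\phi J)(v\cdot J\phi J),
\]
using AAIP twice and $\phi\in\Aut(Q)$ once.

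Third, applying $(A_\ell)$ closes the argument. Using \eqnref{aaip1}, \eqnref{aaip2}, and AAIP, a short calculation gives
\[
J R_{x,y} J = (JR_xJ)(JR_yJ)(JR_{xy}J)^{-1} = L_{x^{-1}}L_{y^{-1}}L_{y^{-1}x^{-1}}^{-1} = L_{x^{-1},y^{-1}}.
\]
Since $L_{x^{-1},y^{-1}}\in\Aut(Q)$ by $(A_\ell)$, the observation from Step~2 yields $R_{x,y} = JL_{x^{-1},y^{-1}}J \in \Aut(Q)$, which is $(A_r)$. The main obstacle is the first step --- once AAIP is secured from $(A_m)$, the remainder is a formal manipulation exploiting the symmetry between $R$ and $L$ under conjugation by $J$.
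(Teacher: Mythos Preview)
The paper does not supply its own proof of this proposition; it simply cites \cite[Thm.~6.7]{JKNV}. Your outline is correct and is in fact the natural argument---and almost certainly the one given in \cite{JKNV}, given that the AAIP result you invoke is precisely the preceding \cite[Cor.~6.6]{JKNV}. The computation in your Step~3 is essentially the same one the present paper carries out in Lemma~\lemref{RxyLxy} to establish \eqnref{RxyLxy}, except that there the authors already know $R_{x,y}\in\Aut(Q)$ and use $R_{x,y}^J = R_{x,y}$, whereas you (correctly, since that is what is to be proved) compute $JR_{x,y}J$ directly from \eqnref{aaip1}. One small remark: your Step~1 asserts that AAIP follows from $(A_m)$ \emph{alone}; this is indeed what \cite[Cor.~6.6]{JKNV} proves, but note that the present paper only states the consequence for full automorphic loops (Proposition~\prpref{AAIP}), so you are right to flag the stronger input as the nontrivial external ingredient.
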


The \emph{left}, \emph{right}, and \emph{middle nucleus} of a loop $Q$ are defined, respectively, by
\begin{align*}
N_{\lambda}(Q) &= \{ a\in Q\mid ax\cdot y = a\cdot xy,\quad \forall x,\,y\in Q\}, \\
N_{\rho}(Q) &= \{ a\in Q\mid xy\cdot a = x\cdot ya,\quad \forall x,\,y\in Q\}, \\
N_{\mu}(Q) &= \{ a\in Q\mid xa\cdot y = x\cdot ay,\quad \forall x,\,y\in Q\},
\end{align*}
and the \emph{nucleus} is $N(Q) = N_{\lambda}(Q)\cap N_{\rho}(Q)\cap N_{\mu}(Q)$. Each of these is a subloop.

Recall that a subloop $S\le Q$ is normal in $Q$, $S\unlhd Q$, if $(S)\varphi = S$ for all $\varphi\in\Inn(Q)$.

\begin{proposition}
\prplabel{nuclei}
Let $Q$ be an automorphic loop. Then
\begin{enumerate}
\item[(i)] $N_{\lambda}(Q) = N_{\rho}(Q) \subseteq N_{\mu}(Q)$, and
\item[(ii)] each nucleus is normal in $Q$.
\end{enumerate}
\end{proposition}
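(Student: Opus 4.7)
The plan is to handle (ii) via the general fact that every automorphism of $Q$ preserves each nucleus, and to split (i) into the equality $N_\lambda = N_\rho$ (from AAIP) and the inclusion $N_\lambda \subseteq N_\mu$ (from $T_a$ being an automorphism). For (ii) itself, given any $\phi \in \Aut(Q)$ and $a \in N_\lambda(Q)$, applying $\phi$ to $a(xy) = (ax)y$ and using the bijectivity of $\phi$ on $Q$ yields the analogous identity for $a\phi$; hence $(N_\lambda)\phi = N_\lambda$, and likewise for $N_\rho$ and $N_\mu$. Since $\Inn(Q) \le \Aut(Q)$ by hypothesis, all three nuclei are stable under every inner mapping, hence normal.

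For $N_\lambda = N_\rho$, I would apply the inversion map $J$ to $a(xy) = (ax)y$ and invoke AAIP (Proposition \prpref{AAIP}) twice; after renaming $u = y\inv$, $v = x\inv$ (both ranging over $Q$), the identity becomes $u(va\inv) = (uv)a\inv$, i.e.\ $a\inv \in N_\rho$. The subloop $N_\rho$ is closed under inversion, so $a \in N_\rho$; the reverse inclusion is symmetric.

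For the crux $N_\lambda \subseteq N_\mu$, let $a \in N_\lambda = N_\rho$. Elements of $N_\lambda \cap N_\rho$ enjoy the one-sided inverse properties $a(a\inv z) = z$ and $(za)a\inv = z$ (since $aa\inv = 1$ by power-associativity), so $L_a\inv = L_{a\inv}$ and $R_a\inv = R_{a\inv}$; in particular $zT_a = (a\inv z)a$ after using $a\inv \in N_\lambda$. The automorphism identity $(xy)T_a = xT_a \cdot yT_a$ then simplifies, using $a$ and $a\inv$ in each nucleus, into
\[
(a\inv x)(ya) = ((a\inv x)a)((a\inv y)a).
\]
With the substitution $u = a\inv x$, $v = a\inv y$ (so $x = au$, $y = av$ by the left inverse property for $a$), both sides reassociate via $N_\lambda$ and $N_\rho$ into $(u(av))a = ((ua)v)a$, and cancelling $R_a$ gives $u(av) = (ua)v$ for every $u, v$, i.e.\ $a \in N_\mu$.

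The main obstacle is that neither AAIP alone nor the substitution $L_{a,x} = R_{a\inv, x\inv}$ from Lemma \lemref{RxyLxy} converts the $N_\lambda$-hypothesis into the $N_\mu$-conclusion directly; each merely reduces $a \in N_\mu$ to an AAIP-equivalent restatement. The automorphism property of the specific inner mapping $T_a$, together with the inverse properties automatic for elements of $N_\lambda \cap N_\rho$, is the genuine ingredient that forces middle-associativity.
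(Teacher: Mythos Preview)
Your proof is correct and follows essentially the same strategy as the paper: AAIP gives $N_\lambda = N_\rho$, and the automorphism property of $T_a$ (together with the fact that $T_a$ acts as conjugation by $a$ once $a \in N_\lambda \cap N_\rho$) forces $a \in N_\mu$. The paper's computation is slightly more economical---it applies $T_a$ directly to $x\cdot ay$ and reads off $(x\cdot ay)T_a = (xa\cdot y)T_a$ in one line, avoiding your substitution $u = a\inv x$, $v = a\inv y$---and for (ii) the paper simply cites \cite[Thm.~2.2(iii)]{BP}, whereas you spell out the (standard) argument that nuclei are characteristic and hence $\Inn(Q)$-invariant.
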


\begin{proof}
The equality $N_{\lambda}(Q) = N_{\rho}(Q)$ is an immediate consequence of the AAIP. Suppose $a \in N_{\lambda}(Q)$. Then $a\inv \in N_{\lambda}(Q)$ and $(x)T_a = a\inv x a$. Now for all $x$, $y\in Q$,
\[
(x\cdot ay)T_a = (x)T_a\cdot (ay)T_a = (a\inv x a)\cdot ya
= a\inv (xa\cdot y) a = (xa\cdot y)T_a\,,
\]
where we used ($A_m$) in the first equality, and the equality of the left and right nuclei in the third. Since $T_a$ is a permutation, we have $x\cdot ay = xa\cdot y$ for all $x$, $y\in Q$, that is, $a\in N_{\mu}(Q)$. This establishes (i). Part (ii) is \cite[Thm. 2.2(iii)]{BP}.
\end{proof}

For a subset $S$ of a loop $Q$, we define the \emph{commutant of} $S$ to be the set
\[
C_Q(S) = \{ a\in Q\mid ax = xa\quad \text{for all}\quad x\in S \}\,.
\]
The \emph{commutant} of $Q$ itself, $C_Q(Q)$ is just denoted by $C(Q)$. (In a group, the commutant of a set is the centralizer of the set and the commutant is the center. However, ``center'' has a narrower meaning in loop theory, and so we adapt operator theory terminology to the present setting.)

\begin{proposition}
\prplabel{commutant}
Let $Q$ be an automorphic loop and let $S\subseteq Q$. Then $C_Q(S) \leq Q$. Furthermore, if $S\normal Q$ then $C_Q(S)\normal Q$. In particular, the commutant $C(Q)$ is a normal subloop of $Q$.
\end{proposition}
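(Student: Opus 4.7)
The plan is to realize $C_Q(S)$ as the fixed point set of a family of automorphisms, so that it is automatically a subloop, and then use normality of $S$ to show that $C_Q(S)$ is preserved by all inner mappings.

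First I would observe the key reformulation: for $a,x\in Q$, one has $ax=xa$ if and only if $x\ldiv(ax)=a$, that is, if and only if $(a)T_x=a$. Hence
\[
C_Q(S)=\bigcap_{x\in S}\Fix(T_x).
\]
Since $Q$ is automorphic, $T_x\in\Inn(Q)\le\Aut(Q)$ for every $x\in S$. The fixed point set of any loop automorphism $\varphi$ is a subloop, because $\varphi(1)=1$ and $\varphi(a\cdot b)=a\cdot b$, $\varphi(a\ldiv b)=a\ldiv b$, $\varphi(a\rdiv b)=a\rdiv b$ whenever $a,b$ are fixed. An intersection of subloops is a subloop, so $C_Q(S)\le Q$.

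For the second assertion, assume $S\normal Q$ and let $\varphi\in\Inn(Q)$, $a\in C_Q(S)$, and $x\in S$. Since $\Inn(Q)$ is a group and $S$ is normal, $\varphi\inv(x)\in S$, hence $a\cdot\varphi\inv(x)=\varphi\inv(x)\cdot a$. Applying the automorphism $\varphi$ yields $\varphi(a)\cdot x=x\cdot\varphi(a)$, so $\varphi(a)\in C_Q(S)$. Thus $\varphi(C_Q(S))\subseteq C_Q(S)$ for every $\varphi\in\Inn(Q)$, and replacing $\varphi$ by $\varphi\inv$ gives the reverse inclusion. Therefore $C_Q(S)\normal Q$. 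Specializing to $S=Q$ (which is trivially normal in itself) yields $C(Q)\normal Q$.

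No step looks hard here; the only point that needs to be made carefully is the reformulation of the commutant as a fixed-point set, which is the bridge that lets the automorphic hypothesis do all the work. Everything else is either a one-line loop-theoretic verification or a routine application of the fact that inner mappings permute normal subloops.
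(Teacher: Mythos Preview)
Your proof is correct and follows essentially the same approach as the paper: both realize $C_Q(S)$ as $\bigcap_{x\in S}\Fix(T_x)$, use that each $T_x$ is an automorphism to conclude it is a subloop, and then for normality apply an arbitrary $\varphi\in\Inn(Q)$ together with $\varphi\inv(x)\in S$ to show $\varphi(a)$ commutes with every $x\in S$. Your write-up is slightly more explicit (spelling out why fixed-point sets are subloops and noting the reverse inclusion), but there is no substantive difference.
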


\begin{proof}
We have $a\in C_Q(S)$ if and only if $(a)T_x = a$ for all $x\in S$. Thus $C_Q(S)$ is characterized as the intersection of the fixed point sets of all $T_x$, $x\in S$. Since $T_x\in\Aut(Q)$, the fixed point set of $T_x$ is a subloop of $Q$, and $C_Q(S)\le Q$ follows.

Now suppose $S\normal Q$. Fix $a\in C_Q(S)$, $x\in S$, $\varphi\in \Inn(Q)$ and set $y = (x)\varphi\inv\in S$. Then
\[
x(a)\varphi = (y)\varphi(a)\varphi = (ya)\varphi = (ay)\varphi = (a)\varphi(y)\varphi = (a)\varphi x\,,
\]
using $\varphi\in \Aut(Q)$ in the first and fourth equalities and $a\in C_Q(S)$ in the third. Since $x\in S$ was arbitrary, $(a)\varphi\in C_Q(S)$. Thus $C_Q(S)\unlhd Q$.
\end{proof}

We conclude the section with several definitions needed throughout the paper.

A subset $S$ of a loop $Q$ is said to be \emph{characteristic} in $Q$, denoted by $S\Char Q$, if for every $\varphi\in \Aut(Q)$, $(S)\varphi = S$.  A loop is \emph{characteristically simple} if it has no nontrivial characteristic subloops. A loop is \emph{simple} if it has no nontrivial normal subloops.

A loop $Q$ is \emph{solvable} if it has a subnormal series $1= Q_0 \leq \cdots \leq Q_n = Q$, $Q_i \normal Q_{i+1}$, such that each factor loop $Q_{i+1}/Q_i$ is an abelian group.

The \emph{derived subloop} $Q'$ of a loop $Q$ is the smallest normal subloop of $Q$ such that $Q/Q'$ is an abelian group. The derived subloop can be characterized as the smallest normal subloop containing each \emph{commutator} $[x,y]$, defined by $xy\cdot [y,x]=yx$, and each \emph{associator} $[x,y,z]$, defined by $xy\cdot z = (x\cdot yz)[x,y,z]$. Since automorphisms evidently map commutators to commutators and associators to associators, it follows that $Q' \Char Q$.

The \emph{higher derived subloops} are defined in the usual way: $Q^{(2)} = Q'' = (Q')'$, $Q^{(3)} = Q'''$, \emph{etc}. Note that a loop $Q$ is solvable if and only if $Q^{(n)} = 1$ for some $n > 0$.

A \emph{Bruck loop} is a loop satisfying the \emph{left Bol identity} $(x(yx))z = x(y(xz))$ and the \emph{automorphic inverse property} $(xy)\inv = x\inv y\inv$.

\section{Cores and twisted subgroups}
\seclabel{cores}

In an automorphic loop $Q$, we introduce a new binary operation $\ast$ as follows:
\[
x\ast y = x\inv \ldiv (y\inv x) = (x\inv \ldiv y\inv) x  \tag{$\ast$}
\]
for all $x$, $y\in Q$. (The second equality follows from \eqnref{RLLR}.) We will refer to the magma $(Q,\ast)$ as the \emph{core} of the loop $Q$, which should not be confused with the core of a subgroup in group theory. A similar notion was introduced by Bruck \cite{Bruck} for Moufang loops (where the operation can be more simply written as $xy\inv x$) and also in our previous papers \cite{JKV1,JKV2} in the commutative case.

As in \cite{Gl2,JKV1,JKV2}, it is useful to introduce the following permutations for each $x$ in an automorphic loop $Q$:
\[
P_x = R_x L_{x\inv}^{\inv} = L_{x^{-1}}^{-1} R_x,  \tag{P}
\]
where the second equality follows by \prpref{commutes}. Thus the left translation maps of the core $(Q,\ast)$ are just the maps $J P_x$, $x\in Q$; a fact we will use heavily.

\begin{proposition}
\prplabel{coreaut}
Let $Q$ be an automorphic loop with core $(Q,\ast)$. Then for all $x$, $y$, $z\in Q$,
\begin{align}
(y\ast z)R_x &= yR_x\ast zR_x, \eqnlabel{Rcoreaut}\\
(y\ast z)L_x &= yL_x\ast zL_x. \eqnlabel{Lcoreaut}
\end{align}
Therefore $\Mlt(Q) \leq \Aut(Q,\ast)$. In particular, $P_x\in\Aut(Q,*)$ for all $x\in Q$.
\end{proposition}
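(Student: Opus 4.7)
The plan is to reformulate everything at the level of permutations of $Q$. The paper observes just before the statement that the left translation $z\mapsto y\ast z$ of the core equals $JP_y$, since using the second form of $(\ast)$ we have
\[
y\ast z = (y^{-1}\ldiv z^{-1})y = (z^{-1})L_{y^{-1}}^{-1}R_y = zJP_y
\]
with maps written on the right. Under this identification, \eqnref{Rcoreaut} is equivalent to the permutation identity $JP_yR_x = R_xJP_{yx}$.

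To prove this, push $J$ past $R_x$ using $R_xJ = JL_{x^{-1}}$ from \eqnref{aaip1}, then cancel the leading $J$, reducing the claim to $P_yR_x = L_{x^{-1}}P_{yx}$. Expand $P_yR_x = L_{y^{-1}}^{-1}R_yR_x = L_{y^{-1}}^{-1}R_{y,x}R_{yx}$ by the definition of $R_{y,x}$; on the other side, by (AAIP), $P_{yx} = L_{(yx)^{-1}}^{-1}R_{yx} = L_{x^{-1}y^{-1}}^{-1}R_{yx}$. Canceling $R_{yx}$, the obligation becomes
\[
L_{y^{-1}}^{-1}R_{y,x} = L_{x^{-1}}L_{x^{-1}y^{-1}}^{-1},
\]
and this follows immediately from \eqnref{RxyLxy}, which gives $R_{y,x} = L_{y^{-1},x^{-1}} = L_{y^{-1}}L_{x^{-1}}L_{x^{-1}y^{-1}}^{-1}$.

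For \eqnref{Lcoreaut}, take a shortcut rather than running a parallel calculation. Since $\ast$ is defined from $\cdot$, $\ldiv$, and inversion, every automorphism of $(Q,\cdot)$ is automatically an automorphism of $(Q,\ast)$, so $\Inn(Q)\le\Aut(Q,\cdot)\le\Aut(Q,\ast)$; in particular, $T_x\in\Aut(Q,\ast)$. Combined with the already established $R_x\in\Aut(Q,\ast)$ and the factorization $L_x = T_x^{-1}R_x$ coming from $T_x = R_xL_x^{-1}$, this yields $L_x\in\Aut(Q,\ast)$.

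Since $\Mlt(Q) = \sbl{R_x, L_x\ |\ x\in Q}$, the inclusion $\Mlt(Q)\le\Aut(Q,\ast)$ follows, and in particular $P_x = L_{x^{-1}}^{-1}R_x\in\Mlt(Q)\le\Aut(Q,\ast)$. The main obstacle is really just the bookkeeping in the first step: translating \eqnref{Rcoreaut} into the permutation identity $P_yR_x = L_{x^{-1}}P_{yx}$ and arranging the expansions so that \eqnref{RxyLxy} delivers exactly the right cancellation; everything after that is a one-line application.
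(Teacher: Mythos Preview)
Your proof is correct. For \eqnref{Rcoreaut} you and the paper do essentially the same thing: both reduce the statement to the permutation identity $P_yR_x = L_{x^{-1}}P_{yx}$ (the paper's \eqnref{PLnorm}) and verify it via \eqnref{RxyLxy}; the only difference is that the paper works forward from \eqnref{RxyLxy} and then evaluates at $z^{-1}$, while you work backward from the target identity.

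For \eqnref{Lcoreaut} your route is genuinely different and slicker. The paper runs a parallel computation: it observes that $R_y^{-1}R_{x^{-1}}R_{xy}$ is an inner mapping, conjugates by $J$ using \eqnref{aaip1}--\eqnref{aaip2}, and rearranges to obtain the companion identity $R_{x^{-1}}P_{xy} = P_yL_x$ (labeled \eqnref{PRnorm}), from which \eqnref{Lcoreaut} follows. You instead note that $\Aut(Q,\cdot)\le\Aut(Q,\ast)$ for free since $\ast$ is built from the loop operations, whence $T_x\in\Aut(Q,\ast)$, and then factor $L_x = T_x^{-1}R_x$. This is cleaner for the proposition at hand. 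The trade-off is that the paper's approach produces \eqnref{PRnorm} as a by-product, and that identity is reused later (in Proposition~\prpref{Pnormal} and in the proof of Theorem~\thmref{order2modd}); your shortcut does not yield it.
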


\begin{proof}
We start with \eqnref{RxyLxy}, which we write as $R_{y,x} = L_{y\inv,x\inv}$, that is, $L_{y\inv} L_{x\inv} L_{(yx)\inv}\inv = R_y R_x R_{yx}\inv$. Rearranging this, we have $L_{x\inv} L_{(yx)\inv}\inv R_{yx} = L_{y\inv}\inv R_y R_x$, or
\begin{equation}
\eqnlabel{PLnorm}
L_{x\inv} P_{yx} = P_y R_x\,.
\end{equation}
Applying both sides of \eqnref{PLnorm} to $z\inv$ yields $(yx)\inv \ldiv [(x\inv z\inv)\cdot yx] = [y\inv \ldiv (z\inv y)]x$. Since $x\inv z\inv = (zx)\inv$ by the AAIP, we have \eqnref{Rcoreaut}.

To establish \eqnref{Lcoreaut}, observe first that $((1\rdiv y)x^{-1})^{-1} = x(1\rdiv y)^{-1} = xy$ by AAIP, and so $R_y\inv R_{x\inv} R_{xy}$ is an inner mapping, hence an automorphism. Thus
\[
R_y\inv R_{x\inv} R_{xy} = (R_y\inv R_{x\inv} R_{xy})^J
= (R_y\inv)^J R_{x\inv}^J R_{xy}^J = L_{y\inv}\inv L_x L_{(xy)\inv}\,,
\]
using \eqnref{aaip1} and \eqnref{aaip2}. Rearranging, we have $R_{x\inv} R_{xy} L_{(xy)\inv}\inv = R_y L_{y\inv}\inv L_x$, or
\begin{equation}
\eqnlabel{PRnorm}
R_{x\inv} P_{xy} = P_y L_x\,.
\end{equation}
Applying both sides of \eqnref{PRnorm} to $z\inv$ yields $(xy)\inv \ldiv [(z\inv x\inv)\cdot xy] = x[y\inv \ldiv (z\inv y)]$. Since $z\inv x\inv = (xz)\inv$ by the AAIP, we are finished.
\end{proof}

\begin{lemma}
For all $x$ in an automorphic loop $Q$,
\begin{equation}
\eqnlabel{Pinv}
P_x^J = P_x^{\inv} = P_{x\inv}\,.
\end{equation}
Thus in the core $(Q,\ast)$, the following holds for all $x$, $y\in Q$:
\begin{equation}
\eqnlabel{coreinv}
(x \ast y)\inv = x\inv \ast y\inv\,.
\end{equation}
\end{lemma}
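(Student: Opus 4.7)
My plan is to handle the two equalities in \eqnref{Pinv} separately (they chain to give the full statement), and then deduce \eqnref{coreinv} by reading the core as a conjugate of $P_x$.

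For the first equality $P_x^J = P_x^{-1}$, I would simply conjugate $P_x = R_x L_{x^{-1}}^{-1}$ by $J$ and apply the AAIP formulas \eqnref{aaip1} and \eqnref{aaip2}. Since conjugation by $J$ is a homomorphism on $\Sym(Q)$ and commutes with inversion of operators, one gets
\[
P_x^J \;=\; R_x^J\,(L_{x^{-1}}^J)^{-1} \;=\; L_{x^{-1}}\,R_x^{-1},
\]
and the right-hand side is visibly $(R_x L_{x^{-1}}^{-1})^{-1} = P_x^{-1}$.

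For the second equality $P_x^{-1} = P_{x^{-1}}$, I would show that $P_x$ and $P_{x^{-1}}$ are mutual inverses by direct computation. Expanding via the alternate form of $P_{x^{-1}}$ and pushing $R$'s past $L$'s using \eqnref{RLLR} gives
\[
P_x P_{x^{-1}} \;=\; R_x L_{x^{-1}}^{-1} R_{x^{-1}} L_x^{-1} \;=\; R_x R_{x^{-1}}\, L_{x^{-1}}^{-1} L_x^{-1}.
\]
Now the proof of \eqnref{Tinv} already establishes, as an intermediate step, that $R_x R_{x^{-1}} L_x^{-1} L_{x^{-1}}^{-1} = \id$ (this is exactly the computation $T_x T_{x^{-1}} = R_{x,x^{-1}} L_{x^{-1},x}^{-1}$ carried out in the proof of Lemma \lemref{RxyLxy}, after invoking \eqnref{RxyLxy}). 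Rewritten, this reads $R_x R_{x^{-1}} = L_x L_{x^{-1}}$, and substituting yields $P_x P_{x^{-1}} = L_x L_{x^{-1}} L_{x^{-1}}^{-1} L_x^{-1} = \id$, as required.

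For \eqnref{coreinv}, I would use the observation already highlighted in the paper that the left translation by $x$ in the core $(Q,\ast)$ equals $J P_x$, i.e.\ $x\ast y = y(JP_x)$. Then, using $J^2 = \id$,
\[
(x\ast y)^{-1} \;=\; y\,(J P_x)\,J \;=\; y\, (J P_x J) \;=\; y\, P_x^J \;=\; y\, P_{x^{-1}}
\]
by what was just proved, while on the other side $x^{-1}\ast y^{-1} = y^{-1}(J P_{x^{-1}}) = y\,P_{x^{-1}}$. The two are equal.

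The only place requiring care is the second equality $P_x^{-1} = P_{x^{-1}}$; the main obstacle is resisting the temptation to prove it elementwise and instead recognizing that the identity $R_x R_{x^{-1}} = L_x L_{x^{-1}}$ (a byproduct of \eqnref{Tinv}) is exactly what makes $P_x$ and $P_{x^{-1}}$ invert each other. Everything else is routine manipulation of the AAIP-translate relations.
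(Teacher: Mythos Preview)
Your proof is correct and follows essentially the same route as the paper's. The only cosmetic difference is in the second equality: the paper rearranges $P_x P_{x^{-1}}$ (using \eqnref{RLLR} and \eqnref{LLLL}) into the form $R_x L_x^{-1} R_{x^{-1}} L_{x^{-1}}^{-1} = T_x T_{x^{-1}}$ and then cites \eqnref{Tinv} directly, whereas you extract the identity $R_x R_{x^{-1}} = L_x L_{x^{-1}}$ from the proof of \eqnref{Tinv} and substitute---the same content packaged slightly differently.
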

\begin{proof}
We have $P_x^J = R_x^J (L_{x\inv}\inv)^J = L_{x\inv} R_x\inv = P_x\inv$, using \eqnref{aaip1} and \eqnref{aaip2}. Also,
\[
P_x P_{x\inv} = R_x L_{x\inv}\inv R_{x\inv} L_x\inv = R_x L_x\inv R_{x\inv} L_{x\inv}\inv
= T_x T_{x\inv} = \id_Q\,,
\]
using \eqnref{RLLR} and \eqnref{LLLL} in the second equality and \eqnref{Tinv} in the fourth. This establishes \eqnref{Pinv}. Then \eqnref{coreinv} follows, since $(x\ast y)\inv = yJP_xJ = yP_x^J = (y\inv)JP_{x\inv} = x\inv \ast y\inv$.
\end{proof}

\begin{theorem}
\thmlabel{coreprops}
Let $Q$ be an automorphic loop with core $(Q,\ast)$. Then $(Q,\ast)$ is an involutive quandle, that is, the following properties hold:
\begin{enumerate}
\item[(i)] $x\ast x = x$ for all $x\in Q$,
\item[(ii)] $x\ast (x\ast y) = y$ for all $x$, $y\in Q$,
\item[(iii)] $x\ast (y\ast z) = (x\ast y)\ast (x\ast z)$ for all $x$, $y$, $z\in Q$.
\end{enumerate}
\end{theorem}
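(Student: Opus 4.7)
The plan is to reinterpret all three quandle axioms in terms of the permutations $JP_x$, which as noted just after (P) are exactly the left translations of the core: $x \ast y = yJP_x$. With this encoding, axioms (ii) and (iii) become statements about these permutations that follow immediately from results already established in the paper, and axiom (i) is a short direct calculation.

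For (i), evaluating from the definition gives $x \ast x = x\inv \ldiv (x\inv x) = x\inv \ldiv 1 = x$, using that automorphic loops have two-sided inverses (Proposition \prpref{AAIP}). For (ii), the point is that each $JP_x$ is an involution. Regrouping $(JP_x)(JP_x) = (JP_xJ)P_x$ and invoking \eqnref{Pinv}, which gives $P_x^J = JP_xJ = P_x\inv$, one obtains $(JP_x)^2 = P_x\inv P_x = \id$; applying this identity to $y$ yields $x \ast (x \ast y) = y$.

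For (iii), observe that both $P_x$ and $J$ are automorphisms of $(Q,\ast)$: the former by Proposition \prpref{coreaut}, the latter by \eqnref{coreinv}, which says precisely that $J$ respects $\ast$. Their composition $JP_x$ is therefore again an automorphism of $(Q,\ast)$, and the statement that $JP_x$ is an endomorphism, namely $(y \ast z)JP_x = (yJP_x) \ast (zJP_x)$, is exactly the self-distributive law (iii). Because the substantive identities \eqnref{Pinv} and Proposition \prpref{coreaut} are already in hand, no serious obstacle remains; what is left is bookkeeping.
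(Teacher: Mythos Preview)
Your proposal is correct and essentially identical to the paper's proof: the paper also dispatches (i) directly from the definition, proves (ii) via $yJP_xJP_x = yP_x^JP_x = y$ using \eqnref{Pinv}, and proves (iii) by combining \eqnref{coreinv} (that $J$ respects $\ast$) with Proposition \prpref{coreaut} (that $P_x$ respects $\ast$), writing out the composite step by step rather than abstracting it as ``$JP_x\in\Aut(Q,\ast)$''.
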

\begin{proof}
Part (i) is clear from the definition of $\ast$. For (ii), $x\ast (x\ast y) = yJP_xJP_x = yP_x^J P_x = y$ by \eqnref{Pinv}. For (3),
\[
x\ast (y\ast z) = (y\ast z)JP_x = (y\inv \ast z\inv)P_x = (y\inv)P_x \ast (z\inv)P_x
= (x\ast y)\ast (x\ast z)\,,
\]
using \eqnref{coreinv} and Proposition \prpref{coreaut}.
\end{proof}

Recall that a subset $A$ of a group $G$ is said to be a \emph{twisted subgroup} of $G$ if (i) $1\in A$, (ii) $a\in A$ implies $a\inv \in A$, and (iii) $a,b\in A$ implies $aba\in A$.

In an automorphic loop $Q$, let $P_Q = \setof{P_x}{x\in Q}$.

\begin{proposition}
\prplabel{twisted}
Let $Q$ be an automorphic loop. Then $P_Q$ is a twisted subgroup of $\Mlt(Q)$. In particular,
\begin{equation}
\eqnlabel{Ptwisted}
P_x P_y P_x = P_{yP_x}
\end{equation}
for all $x$, $y\in Q$.
\end{proposition}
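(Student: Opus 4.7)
My plan is to verify the three defining properties of a twisted subgroup, with the main work concentrated on the identity \eqnref{Ptwisted}.

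First, the easy conditions. Since $R_1 = L_1 = \id_Q$, directly from the definition of $P_x$ we get $P_1 = \id_Q \in P_Q$, giving closure under the identity. Closure under inversion is already established: equation \eqnref{Pinv} says $P_x^{-1} = P_{x^{-1}}$, so $P_x^{-1} \in P_Q$. What remains is the twisted product condition, which is exactly \eqnref{Ptwisted}.

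For \eqnref{Ptwisted}, the idea is to exploit the fact (from \prpref{coreaut}) that $P_x \in \Aut(Q,*)$, combined with the observation noted in the paper that the left translations of the core are the maps $JP_x$. Concretely, writing $L^*_x$ for the left translation in $(Q,*)$, one has $(y)L^*_x = x * y = x^{-1} \ldiv (y^{-1}x) = (y)JP_x$, so $L^*_x = JP_x$. Since $P_x$ is an automorphism of $(Q,*)$, the general magma identity $\varphi^{-1} L^*_y \varphi = L^*_{y\varphi}$ (valid for any $\varphi \in \Aut(Q,*)$ and acting on the right) yields
\begin{displaymath}
P_x^{-1} \, J P_y \, P_x \;=\; J P_{yP_x}.
\end{displaymath}

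It remains to simplify the left-hand side using \eqnref{Pinv}. From $P_x^J = P_x^{-1}$, that is, $J P_x J = P_x^{-1}$, and from $J^2 = \id_Q$, we deduce $P_x^{-1} J = JP_x$. Substituting into the displayed equation gives $J P_x P_y P_x = J P_{yP_x}$, and cancelling $J$ on the left yields $P_x P_y P_x = P_{yP_x}$, which is \eqnref{Ptwisted}. In particular, $P_x P_y P_x \in P_Q$, completing the verification that $P_Q$ is a twisted subgroup of $\Mlt(Q)$.

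The only potential obstacle is the bookkeeping with left-versus-right actions and the commutation of $J$ with $P_x$; both are short calculations, and no new identities beyond \eqnref{Pinv} and \prpref{coreaut} are needed.
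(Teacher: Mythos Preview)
Your proof is correct and follows essentially the same route as the paper: both arguments exploit that left translations of the core are $JP_x$ and then use \eqnref{Pinv} to clean up. The one minor difference is that you invoke $P_x\in\Aut(Q,\ast)$ from Proposition~\prpref{coreaut}, whereas the paper instead uses $JP_x\in\Aut(Q,\ast)$ from Theorem~\thmref{coreprops}(iii); your choice avoids the final substitution $y\mapsto y^{-1}$ and is marginally cleaner, but the underlying idea is the same.
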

\begin{proof}
Clearly $\id_Q = P_1\in P_Q$. For $x\in Q$, $P_x\inv \in P_Q$ by \eqnref{Pinv}. Since $JP_x \in \Aut(Q,\ast)$ by Theorem \thmref{coreprops}(iii), we have $zJP_y JP_x = (y\ast z)JP_x = yJP_x\ast zJP_x = zJP_x JP_{yJP_x}$ for all $x$, $y$, $z\in Q$. Thus $P_y^J P_x = P_x^J P_{(y\inv)P_x}$. By \eqnref{Pinv}, we deduce $P_x P_{y\inv} P_x = P_{(y\inv)P_x}$. Replacing $y$ with $y\inv$, we have \eqnref{Ptwisted}.
\end{proof}

\begin{corollary}
\corlabel{Ppowers}
Let $Q$ be an automorphic loop. Then for all $x\in Q$ and $n\in \mathbb{Z}$,
\begin{equation}
\eqnlabel{Ppowers}
P_x^n = P_{x^n}\,.
\end{equation}
\end{corollary}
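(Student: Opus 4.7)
The plan is to induct on $|n|$, using the twisted-subgroup identity \eqnref{Ptwisted} as the engine and the power-associativity of $Q$ to identify the correct exponents on the right-hand side.

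I would dispose of the small and negative cases first. For $n=0$, $P_{x^0}=P_1=\id_Q=P_x^0$, and $n=1$ is trivial. Once the statement is established for all nonnegative $n$, the negative case follows at once from \eqnref{Pinv}: for $n>0$,
\[
P_{x^{-n}} = P_{(x^n)\inv} = P_{x^n}\inv = (P_x^n)\inv = P_x^{-n}.
\]

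The main work is the inductive step, which advances by $2$ and is driven by the identity $x^n P_x = x^{n+2}$. Indeed, $yP_x = yR_x L_{x\inv}\inv$ is the unique $z\in Q$ with $x\inv z = yx$, namely $z=x(yx)$; specializing to $y=x^n$ and invoking power-associativity gives $x^n P_x = x(x^n\cdot x) = x\cdot x^{n+1} = x^{n+2}$. Substituting $y=x^n$ into \eqnref{Ptwisted} now yields $P_x P_{x^n} P_x = P_{x^{n+2}}$, and under the inductive hypothesis $P_{x^n}=P_x^n$ the right-hand side collapses to $P_x^{n+2}$. Launching the induction from $n=0$ and $n=1$ handles the even and odd chains separately, hence all nonnegative integers.

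There is no real obstacle here: the stride-two induction forces both parities to be launched, and one must be careful that the unique-quotient computation $x^n P_x = x^{n+2}$ is genuinely a consequence of power-associativity rather than a stronger diassociativity. Beyond \eqnref{Ptwisted}, \eqnref{Pinv}, and Proposition \prpref{power-associative}, no further loop-theoretic input is required.
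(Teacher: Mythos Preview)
Your argument is correct and follows exactly the paper's approach: the paper's proof is the one-line remark that $(x^n)P_x = x^{n+2}$ drives an induction via \eqnref{Ptwisted} for $n\ge 0$, with \eqnref{Pinv} handling $n<0$. One small quibble: the clause ``namely $z=x(yx)$'' is not valid for arbitrary $y$ in an automorphic loop (it presumes a left inverse property), but since you immediately specialize to $y=x^n$ and work inside the power-associative subloop $\langle x\rangle$, the computation $x^n P_x = x^{n+2}$ is legitimate, as you yourself note.
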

\begin{proof}
Since $(x^n)P_x = x^{n+2}$, the desired result follows for $n\geq 0$ by an easy induction using \eqnref{Ptwisted}. For $n < 0$, apply \eqnref{Pinv}.
\end{proof}

Although we have no application for the following result, we mention it for the sake of completeness:

\begin{proposition}
\prplabel{Pnormal}
Let $Q$ be an automorphic loop. Then $\langle P_Q\rangle \normal \Mlt(Q)$.
\end{proposition}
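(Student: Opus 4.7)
The plan is to show that $\langle P_Q\rangle$ is invariant under conjugation by a generating set of $\Mlt(Q)$. Since $\Mlt(Q)$ is generated by the right and left translations $R_x, L_x$ with $x\in Q$, it suffices to verify that for every $x,y\in Q$ the conjugates $L_{x^{-1}} P_{yx} L_{x^{-1}}^{-1}$ and $R_{x^{-1}} P_{xy} R_{x^{-1}}^{-1}$ lie in $\langle P_Q\rangle$; as $x$ and $yx$ (respectively $xy$) range independently over $Q$, this suffices.

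The key ingredients are already in hand, namely the normalization identities \eqnref{PLnorm} and \eqnref{PRnorm} established in the proof of Proposition \prpref{coreaut}. Starting from $L_{x^{-1}} P_{yx} = P_y R_x$, I would multiply on the right by $L_{x^{-1}}^{-1}$ and use the definition $R_x L_{x^{-1}}^{-1} = P_x$ to obtain
\[
L_{x^{-1}} P_{yx} L_{x^{-1}}^{-1} = P_y R_x L_{x^{-1}}^{-1} = P_y P_x\in \langle P_Q\rangle.
\]
So every $L_z$ normalizes $\langle P_Q\rangle$.

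The right-translation case will be completely symmetric. Rearranging \eqnref{PRnorm} as $R_{x^{-1}} P_{xy}R_{x^{-1}}^{-1} = P_y L_x R_{x^{-1}}^{-1}$ and noting that by $(P)$ applied to $x^{-1}$ one has $P_{x^{-1}} = R_{x^{-1}} L_x^{-1}$, hence $L_x R_{x^{-1}}^{-1} = P_{x^{-1}}^{-1}$, gives
\[
R_{x^{-1}} P_{xy}R_{x^{-1}}^{-1} = P_y P_{x^{-1}}^{-1}\in \langle P_Q\rangle,
\]
so every $R_z$ normalizes $\langle P_Q\rangle$ as well. Combining the two cases, every generator of $\Mlt(Q)$ normalizes $\langle P_Q\rangle$, which yields the claim. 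There is no substantive obstacle: the proposition is essentially a direct corollary of the identities \eqnref{PLnorm} and \eqnref{PRnorm} together with the factorizations of $P_x$ and $P_{x^{-1}}$ in terms of $R$ and $L$.
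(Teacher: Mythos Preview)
Your proposal is correct and follows essentially the same route as the paper's own proof: both arguments use \eqnref{PLnorm} and \eqnref{PRnorm} together with the factorization $P_x = R_x L_{x^{-1}}^{-1}$ (and its variant for $P_{x^{-1}}$) to show that conjugating any $P_w$ by a translation lands in $\langle P_Q\rangle$. The only cosmetic difference is that the paper writes the conjugates as $R_x^{-1}P_yR_x = P_x^{-1}P_{yx}$ and $L_x^{-1}P_yL_x = P_{x^{-1}}P_{xy}$, whereas you rearrange the same identities to obtain $L_{x^{-1}}P_{yx}L_{x^{-1}}^{-1} = P_yP_x$ and $R_{x^{-1}}P_{xy}R_{x^{-1}}^{-1} = P_yP_{x^{-1}}^{-1}$.
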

\begin{proof}
By \eqnref{PLnorm}, we have for each $x,y\in Q$, $R_x\inv P_y R_x = R_x\inv L_{x\inv} P_{yx} = P_x\inv P_{yx} \in \langle P_Q\rangle$. By \eqnref{PRnorm}, we have for each $x,y\in Q$, $L_x\inv P_y L_x = L_x\inv R_{x\inv} P_{xy} = P_{x\inv} P_{xy}\in \langle P_Q\rangle$. Since $\Mlt(Q)$ is generated by all $R_x$, $L_x$, $x\in Q$, we have the desired result.
\end{proof}

\section{Uniquely $2$-divisible automorphic loops}
\seclabel{bruck}

A loop $Q$ is said to be \emph{uniquely} $2$-\emph{divisible} if the squaring map $x\mapsto x^2$ is a permutation of $Q$.

\begin{lemma}
\lemlabel{Pcorrespond}
Let $Q$ be a uniquely $2$-divisible automorphic loop. Then $Q\to P_Q; x\mapsto P_x$ is a bijection.
\end{lemma}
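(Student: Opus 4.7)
The plan is that surjectivity of $x\mapsto P_x$ is immediate from the definition $P_Q = \setof{P_x}{x\in Q}$, so the lemma reduces entirely to verifying injectivity. For this, I would evaluate $P_x$ at the neutral element $1$ and observe that the single value $(1)P_x$ already recovers $x^2$. Once this is established, $P_x = P_y$ forces $x^2 = y^2$, and unique $2$-divisibility immediately finishes the job.

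Concretely, writing $P_x = R_x L_{x\inv}\inv$ and applying it to $1$, I would run the computation
\[
(1)P_x = (x)L_{x\inv}\inv = x\inv\ldiv x,
\]
where the last expression denotes the unique solution $z$ of $x\inv z = x$. By power-associativity of automorphic loops (Proposition \prpref{power-associative}), the subloop $\sbl{x}$ is a group, so $x\inv\cdot x^2 = x$, and hence $z = x^2$. Thus $(1)P_x = x^2$ for every $x\in Q$.

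The hypothesis of unique $2$-divisibility is now used in exactly the minimal way one would expect: the squaring map is a permutation of $Q$, hence injective, so $x^2 = y^2$ implies $x = y$. Combined with the tautological surjectivity, this gives the desired bijection. There is no real obstacle here; the result is essentially the one-line observation $(1)P_x = x^2$ paired with injectivity of squaring, and unique $2$-divisibility enters only at the final step to promote equality of squares to equality of elements.
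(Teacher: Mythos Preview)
Your proof is correct and matches the paper's argument essentially line for line: apply $P_x$ to $1$ to get $x^2$, then use injectivity of squaring. The paper's version is terser (it simply asserts $(1)P_x = x^2$ without unpacking the left division), but the content is identical.
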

\begin{proof}
To see that the map is one-to-one, suppose $P_x = P_y$. Applying both sides to $1$, we obtain $x^2 = y^2$. By unique $2$-divisibility, $x = y$.
\end{proof}

It is well known that a uniquely $2$-divisible twisted subgroup $T$ of a group $G$ can be turned into a Bruck loop $(T,\bullet)$ by setting
\begin{displaymath}
    a\bullet b = (ab^2a)^{1/2}\tag{$\bullet$}.
\end{displaymath}
See \cite[Lem. 4.5]{FKP}, for instance.

In a uniquely $2$-divisible automorphic loop $Q$, the set $P_Q$ is a uniquely $2$-divisible twisted subgroup of $\Mlt(Q)$ by Proposition \prpref{twisted} and Corollary \corref{Ppowers}, noticing that $P_x^{1/2} = P_{x^{1/2}}$ for all $x\in Q$. Thus we can define
\[
P_x\bullet P_y = [P_x P_y^2 P_x]^{1/2} = P_{(y^2)P_x}^{1/2} = P_{[(y^2)P_x]^{1/2}},
\]
making $(P_Q,\bullet)$ into a Bruck loop.

Upon defining $(Q,\circ)$ on $Q$ by
\[
x\circ y = [(x\inv \ldiv y^2) x]^{1/2} = [(y^2)P_x]^{1/2}\,,  \tag{$\circ$}
\]
we see that the bijection $(Q,\circ)\to (P_Q,\bullet)$; $x\mapsto P_x$ is an isomorphism of magmas. Thus $(Q,\circ)$ is a Bruck loop, the \emph{Bruck loop associated with the uniquely $2$-divisible automorphic loop $Q$}.

We have established most of the following:

\begin{proposition}
\prplabel{bruckloop}
Let $Q$ be a uniquely $2$-divisible automorphic loop. Then $(Q,\circ)$ defined by $(\circ)$ is a Bruck loop. Powers in $(Q,\circ)$ coincide with powers in $Q$. Any subloop of $Q$ which is closed under square roots is a subloop of $(Q,\circ)$.
\end{proposition}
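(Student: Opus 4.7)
My plan is to deduce all three assertions from the magma isomorphism $x\mapsto P_x$ between $(Q,\circ)$ and $(P_Q,\bullet)$ alluded to in the paragraph preceding the statement. Since $(P_Q,\bullet)$ is a Bruck loop by the standard twisted subgroup construction (which is available here thanks to Proposition \prpref{twisted} and Corollary \corref{Ppowers}), transferring through this isomorphism should handle the first claim almost automatically, and the remaining claims will reduce to routine facts about Bruck loops arising from twisted subgroups.

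For the first assertion, I would verify that the bijection $x\mapsto P_x$ from Lemma \lemref{Pcorrespond} is a magma homomorphism $(Q,\circ)\to(P_Q,\bullet)$. The crucial calculation is
\[
P_{x\circ y}=P_{[(y^2)P_x]^{1/2}}=\bigl(P_{(y^2)P_x}\bigr)^{1/2}=\bigl(P_x P_{y^2} P_x\bigr)^{1/2}=\bigl(P_x P_y^2 P_x\bigr)^{1/2}=P_x\bullet P_y,
\]
where the first, third, and fourth equalities are instances of Corollary \corref{Ppowers} (used also at $n=\tfrac{1}{2}$, which is legitimate because $P_{x^{1/2}}^2=P_x$ by \eqnref{Ppowers} forces $P_{x^{1/2}}=P_x^{1/2}$), and the second equality is \eqnref{Ptwisted} applied with $y$ replaced by $y^2$. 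Hence $(Q,\circ)$ inherits the Bruck loop structure from $(P_Q,\bullet)$.

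For the coincidence of powers, I will appeal to the standard fact that in any Bruck loop $(T,\bullet)$ obtained from a uniquely $2$-divisible twisted subgroup $T$ of a group, $(\bullet)$-powers coincide with ordinary powers; this follows by an easy induction on $|n|$ from $a\bullet a=(a\cdot a^2\cdot a)^{1/2}=a^2$ together with the fact that $a\inv$ in the ambient group is the $(\bullet)$-inverse of $a$. Applied to $(P_Q,\bullet)$, this gives $P_x^{\bullet n}=P_x^n$, which equals $P_{x^n}$ by Corollary \corref{Ppowers}; pushing this through the isomorphism and invoking the injectivity of Lemma \lemref{Pcorrespond} yields $x^{\circ n}=x^n$ for every $n\in\mathbb{Z}$.

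For the third assertion, let $S\le Q$ be a subloop closed under square roots. The formula $x\circ y=[(x\inv\ldiv y^2)x]^{1/2}$ is built from $\cdot$, $\ldiv$, inversion, squaring, and square-root extraction, each of which preserves $S$, so $S$ is closed under $\circ$. Since $1\in S$ and, by the power coincidence just proved, the $(\circ)$-inverse of $x\in S$ equals $x\inv\in S$, it follows that $S$ is a subloop of $(Q,\circ)$. I do not anticipate a real obstacle, since the substantive algebra has already been packaged into Proposition \prpref{twisted}, Corollary \corref{Ppowers}, and Lemma \lemref{Pcorrespond}; the only external ingredient is the routine power-coincidence fact for twisted subgroup Bruck loops, which is standard.
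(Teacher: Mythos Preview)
Your approach is essentially identical to the paper's: the Bruck loop assertion and the power coincidence are handled exactly as in the text preceding the proposition and in its proof (transport via the bijection $x\mapsto P_x$, together with the standard fact that $\bullet$-powers in a twisted-subgroup Bruck loop agree with ambient group powers, then Corollary \corref{Ppowers}).

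The only point to tighten is your last step. Closure of $S$ under $\circ$, the identity, and $(\circ)$-inverses does not by itself make $S$ a subloop of an arbitrary loop; you need closure under the two divisions. The paper handles this by recording that in a Bruck loop the divisions are expressible through multiplication and inversion, namely $x\ldiv_{\circ} y = x\inv\circ y$ and $x\rdiv_{\circ} y = y\inv\circ((y\circ x)\circ y\inv)$, so closure under $\circ$ and inversion already forces closure under both divisions. Add this one observation and your argument is complete and matches the paper's.
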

\begin{proof}
We already showed that $(Q,\circ)$ is a Bruck loop. Powers of $x$ in $(Q,\circ)$ correspond to powers of $P_x$ in $(P_Q,\bullet)$. But these coincide with powers of $P_x$ in $\Mlt(Q)$ \cite[Lem 4.5]{FKP}. By Corollary \corref{Ppowers}, we conclude that powers in $(Q,\circ)$ coincide with powers in $Q$. In Bruck loops, the left and right divisions can be expressed in terms of the multiplication and inversion: $x\ldiv_{\circ} y = x\inv \circ y$ and $x\rdiv_{\circ} y = y\inv\circ ((y\circ x)\circ y\inv)$. Thus the claim about subloops follows directly from ($\circ$).
\end{proof}

Note that $x\circ y = [(x\inv\ldiv y^2)x]^{1/2} = [x\inv\ldiv(y^2x)]^{1/2}$ by Proposition \prpref{commutes}

\begin{proposition}
Let $Q$ be a uniquely $2$-divisible automorphic loop. Then the core $(Q,\ast)$ is a quasigroup.
\end{proposition}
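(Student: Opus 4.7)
The plan is to show that both the left and right translation maps of the core $(Q,\ast)$ are bijections of $Q$, which is precisely what it means for $(Q,\ast)$ to be a quasigroup.

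The left translation $L^{\ast}_x : y \mapsto x\ast y$ is easy: the discussion immediately preceding Proposition~\prpref{coreaut} already identifies $L^{\ast}_x$ with $JP_x$, a composition of bijections of $Q$ and hence itself a bijection.

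For the right translation $R^{\ast}_x : y \mapsto y\ast x$, more work is needed. First I would compute, either directly from $(\ast)$ or by using \eqnref{coreinv} together with $x\ast y = yJP_x$, that $y\ast x = (x^{-1})P_y$. Thus proving $R^{\ast}_x$ is a bijection reduces to the following statement: for every $a,b\in Q$, the equation $(a)P_y = b$ has a unique solution $y\in Q$. My strategy is to convert this into a division problem inside the associated Bruck loop $(Q,\circ)$ of Proposition~\prpref{bruckloop}. Its defining identity $x\circ y = [(y^2)P_x]^{1/2}$ rearranges to the key relation $(y^2)P_x = (x\circ y)^2$. Using unique $2$-divisibility of $Q$, write $a = z^2$ with $z = a^{1/2}$, so that $(a)P_y = b$ becomes $(y\circ z)^2 = b$. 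Since powers in $(Q,\circ)$ coincide with powers in $Q$ (Proposition~\prpref{bruckloop}), the Bruck loop $(Q,\circ)$ is itself uniquely $2$-divisible, so this is in turn equivalent to $y\circ z = b^{1/2}$; this has a unique solution $y$ because $(Q,\circ)$ is a loop.

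The main obstacle is the right-translation half of the statement. The most natural first attempt -- conjugation by the core automorphism $J$ supplied by \eqnref{coreinv} -- only converts $R^{\ast}_x$ into $R^{\ast}_{x^{-1}}$ and so gives no information about bijectivity. The decisive observation is that $P_y$ applied to a square in $Q$ returns a square in $(Q,\circ)$, which lets us trade the ``$P$-equation'' for a clean division in the loop $(Q,\circ)$. Unique $2$-divisibility is essential both to form $a^{1/2}$ and $b^{1/2}$ and to cancel the outer squares unambiguously.
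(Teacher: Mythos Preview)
Your argument is correct and rests on the same idea as the paper's: pass to the associated Bruck loop $(Q,\circ)$ and use unique $2$-divisibility. The paper packages both translations at once via the single identity $x\ast y = (x\circ y^{-1/2})^2$, which immediately exhibits each core translation as a composition of bijections (squaring, a $\circ$-translation, and $y\mapsto y^{-1/2}$); your right-translation computation is exactly this identity unwound, and your separate treatment of the left translation via $JP_x$ is a valid shortcut but not needed once the identity is in hand.
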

\begin{proof}
This follows immediately from the unique $2$-divisibility, the fact that $(Q,\circ)$ is a loop, and the observation $x\ast y = (x\circ y^{-1/2})^2$.
\end{proof}

The \emph{left multiplication group} $\LMlt(Q)$ of a loop $Q$ is the group $\sbl{L_x\ |\ x\in Q}\le\Mlt(Q)$.

\begin{lemma}
\lemlabel{LMltBruck}
Let $Q$ be a uniquely $2$-divisible automorphic loop with associated Bruck loop $(Q,\circ)$. Then $\LMlt(Q,\circ)$ is conjugate in $\Sym(Q)$ to $\sbl{P_Q}$.
\end{lemma}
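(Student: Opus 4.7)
The plan is to exhibit an explicit conjugator: the squaring map. Since $Q$ is uniquely $2$-divisible, the map $S:Q\to Q$, $y\mapsto y^2$, is a bijection, hence an element of $\mathrm{Sym}(Q)$; its inverse $S^{-1}$ is the square-root map $y\mapsto y^{1/2}$. Neither $S$ nor $S^{-1}$ need lie in $\Mlt(Q)$, but that is irrelevant: the lemma only asserts conjugacy inside $\mathrm{Sym}(Q)$.

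Next I would read off the definition $(\circ)$ using the right-action convention of the paper. For each $x\in Q$ and each $y\in Q$,
\[
(y)L_x^\circ \;=\; x\circ y \;=\; \bigl[(y^2)P_x\bigr]^{1/2} \;=\; (y)\bigl(S\,P_x\,S^{-1}\bigr),
\]
so that $L_x^\circ = S P_x S^{-1}$ as elements of $\mathrm{Sym}(Q)$. This is the entire content of the lemma up to passing from generators to the groups they generate.

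Finally, since conjugation by $S$ is an automorphism of $\mathrm{Sym}(Q)$, applying it to the generating set $\{P_x : x\in Q\} = P_Q$ yields
\[
\LMlt(Q,\circ) \;=\; \langle L_x^\circ : x\in Q\rangle \;=\; S\,\langle P_Q\rangle\,S^{-1},
\]
which is exactly the claimed conjugacy.

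There is essentially no obstacle: the formula $(\circ)$ was tailored so that the conjugation is visible by inspection, and the only substantive ingredient is the unique $2$-divisibility, which is precisely what promotes $S$ to a member of $\mathrm{Sym}(Q)$. The one point requiring mild care is the composition order, but the right-action convention $(y)fg = ((y)f)g$ fixed throughout the paper makes the computation above unambiguous.
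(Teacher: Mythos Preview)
Your proof is correct and essentially identical to the paper's: it too uses the squaring permutation $\sigma$ as the conjugator and observes that $L_x^\circ = \sigma P_x \sigma^{-1}$, then passes to the generated subgroups. The only differences are cosmetic (you name the map $S$ and spell out the composition-order check in slightly more detail).
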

\begin{proof}
Let $\sigma : Q\to Q; x\mapsto x^2$ denote the squaring permutation. For each $x\in Q$, the left translation $y\mapsto x\circ y$ is just $\sigma P_x \sigma^{-1}$. This establishes the desired result.
\end{proof}

We will need the following easy observation later.

\begin{lemma}
\lemlabel{bruck-aut}
Let $Q$ be a uniquely $2$-divisible automorphic loop with associated Bruck loop $(Q,\circ)$. Then $\Aut(Q)\leq \Aut(Q,\circ)$. In particular, every inner mapping of $Q$ acts as an automorphism of $(Q,\circ)$.
\end{lemma}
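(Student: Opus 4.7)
The plan is to show that an arbitrary $\varphi\in\Aut(Q)$ commutes with each loop operation that appears in the formula $(\circ)$, and then conclude directly. Specifically, every automorphism of the quasigroup $(Q,\cdot)$ automatically preserves the left and right divisions (since, e.g., $a\ldiv b$ is the unique $x$ with $ax=b$, and $\varphi$ is a bijection) and the inverse map $J$ (since $\varphi$ fixes $1$). It also commutes with the squaring map $\sigma:x\mapsto x^2$, so $(y\varphi)^2 = (y^2)\varphi$; because $Q$ is uniquely $2$-divisible, $\sigma$ is a bijection and its inverse $\sigma^{-1}$ commutes with $\varphi$ as well. Hence $\varphi$ commutes with the square-root operation $z\mapsto z^{1/2}$.

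Given these compatibilities, I would simply apply $\varphi$ to
\[
x\circ y = \bigl[(x^{-1}\ldiv y^2)\,x\bigr]^{1/2}
\]
and push $\varphi$ inside step by step:
\[
(x\circ y)\varphi = \bigl[((x^{-1}\ldiv y^2)\,x)\varphi\bigr]^{1/2}
= \bigl[((x\varphi)^{-1}\ldiv (y\varphi)^2)(x\varphi)\bigr]^{1/2}
= (x\varphi)\circ (y\varphi).
\]
This establishes $\varphi\in\Aut(Q,\circ)$, proving $\Aut(Q)\le\Aut(Q,\circ)$. The ``in particular'' clause is then immediate: since $Q$ is automorphic, $\Inn(Q)\le\Aut(Q)$, and so each inner mapping lies in $\Aut(Q,\circ)$.

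There is no real obstacle here; the argument is essentially a syntactic check, and the only substantive input is unique $2$-divisibility, which is precisely what ensures that ``taking square roots'' is a well-defined operation that any bijective homomorphism of $(Q,\cdot)$ must respect.
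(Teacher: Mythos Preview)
Your proof is correct and is exactly the routine verification the paper has in mind: the lemma is stated without proof as an ``easy observation,'' and the intended argument is precisely that any $\varphi\in\Aut(Q)$ preserves inversion, left division, multiplication, and (by unique $2$-divisibility) square roots, hence preserves the formula $(\circ)$.
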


Next, we prove the Lagrange and Cauchy Theorems for automorphic loops of odd order. First, we must show that for finite automorphic loops, the notions of unique $2$-divisibility and having odd order coincide. In fact, this is true more generally for finite power-associative loops.

\begin{lemma}
\lemlabel{odd-2div}
Let $Q$ be a finite loop with two-sided inverses.
\begin{enumerate}
\item[(i)] If $Q$ is uniquely $2$-divisible, then $Q$ has odd order.
\item[(ii)] If $Q$ has odd order and the AAIP, then $Q$ has no
elements of order $2$. If $Q$ is also power-associative,
then $Q$ is uniquely $2$-divisible.
\end{enumerate}
\end{lemma}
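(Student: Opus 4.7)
The plan is to handle the three claims separately, since each leans on a different ingredient.

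For (i), the natural tool is the inversion map $J \colon Q \to Q$, $J(x) = x\inv$, which is an involution of the set $Q$ by the two-sided inverse hypothesis; its fixed points are exactly the solutions of $x^2 = 1$. Standard parity counting for an involution on a finite set gives $|Q| \equiv |\Fix(J)| \pmod 2$. If $|Q|$ were even, $|\Fix(J)|$ would also be even, and since $1 \in \Fix(J)$ there would be a second fixed point $x \neq 1$ with $x^2 = 1 = 1^2$, contradicting the injectivity of squaring. Hence $|Q|$ is odd.

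For the first assertion of (ii), the same parity argument now gives $|\Fix(J)|$ odd, so at least $1$, and the goal is to upgrade this to $\Fix(J) = \{1\}$. Here the plan is to use the AAIP more seriously. For any putative $a \in Q$ with $a^2 = 1$ and $a \neq 1$, the AAIP (combined with $a\inv = a$) gives $J L_a J = R_{a\inv} = R_a$ via \eqnref{aaip1}--\eqnref{aaip2}, so $L_a$ and $R_a$ are conjugate in $\Sym(Q)$ and both contain the $2$-cycle $\{1, a\}$; moreover, $\{1,a\}$ is readily checked to be a genuine subloop of order $2$. From these ingredients I would try to extract a partition of $Q$, or of a suitable $J$-invariant subset, into even-sized pieces, forcing $|Q|$ to be even. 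The main obstacle is pinning down the correct second partition: candidates include the orbit structure of $\sbl{L_a, R_a, J} \leq \Sym(Q)$, or a double-counting identity tying $\Fix(J)$ to the $J$-conjugacy of the pair $(L_a, R_a)$.

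For the second assertion of (ii), granted the first, every element of $Q$ has odd order, because power-associativity makes $x^m$ unambiguous, and if $x$ had even order $2m$ then $x^m$ would have order $2$. For each $x \in Q$ of odd order $n$, the element $x^{(n+1)/2} \in \sbl{x}$ squares to $x$, so squaring is surjective on the finite set $Q$ and therefore bijective, which is the statement that $Q$ is uniquely $2$-divisible.
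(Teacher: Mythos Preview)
Your arguments for (i) and for the second assertion of (ii) are correct and match the paper's proof essentially verbatim (the paper just says the last part ``is clear'', while you spell it out).

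There is a genuine gap in your treatment of the first assertion of (ii). You correctly identify that AAIP must be used in a more substantial way than in (i), but you never actually produce an argument: you list candidate strategies (orbits of $\sbl{L_a,R_a,J}$, a double-counting identity) and concede that ``the main obstacle is pinning down the correct second partition.'' None of the candidates you name is carried to a conclusion, and it is not clear that any of them works without further ideas. So as written this part is a sketch of a search, not a proof.

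The paper's trick is different from any of your candidates and worth knowing. Fix $c\in Q$ with $c^2=1$ and consider the set
\[
K=\setof{(x,y)\in Q\times Q}{xy=c},
\]
which has $|K|=|Q|$ since $y$ is determined by $x$. The AAIP together with $c=c\inv$ shows that the involution $\phi:(x,y)\mapsto(y\inv,x\inv)$ maps $K$ to itself: if $xy=c$ then $y\inv x\inv=(xy)\inv=c\inv=c$. Since $|K|$ is odd and $\phi^2=\id$, $\phi$ has a fixed point $(x,y)\in K$, meaning $y=x\inv$; then $c=xy=xx\inv=1$. This is the parity-on-pairs argument you were looking for, applied not to $Q$ itself but to the graph of $R_c\inv$ inside $Q\times Q$.
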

\begin{proof}
Suppose $Q$ is uniquely $2$-divisible. Then the inversion mapping $J$ fixes only the identity element. Since $J$ has order $2$, the set of nonidentity elements of $Q$ must have even order, and so $Q$ has odd order. This proves (i).

Now assume $Q$ has odd order and the AAIP, and suppose $c\in Q$ satisfies $c^2 = 1$. By the AAIP, if $xy = c$ then $c = c\inv = (xy)\inv = y\inv x\inv$. Thus the set $K = \setof{(x,y)}{xy = c}$ is invariant under the mapping $\phi: Q^2\to Q^2;(x,y)\mapsto (y\inv,x\inv)$. Since $\phi$ is involutive and $|K|$ is odd, $\phi$ has a fixed point $(x,y)\in K$. This point satisfies $x\inv = y$, so that $1 = x x\inv = c$. This establishes the first part of (ii), and the remaining assertion is clear.
\end{proof}

\begin{corollary}
\corlabel{odd-2div}
A finite automorphic loop is uniquely $2$-divisible if and only if it has odd order.
\end{corollary}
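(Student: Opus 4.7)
The corollary should follow essentially at once from Lemma \lemref{odd-2div} once we check that its hypotheses are met for automorphic loops. The plan is simply to verify the two ingredients that Lemma \lemref{odd-2div} requires, then apply each part in the appropriate direction.

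First, I would note that every automorphic loop is power-associative by Proposition \prpref{power-associative} and satisfies the antiautomorphic inverse property by Proposition \prpref{AAIP}; in particular every automorphic loop has two-sided inverses, so the standing hypothesis of Lemma \lemref{odd-2div} is automatic.

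For the forward direction, if $Q$ is a finite uniquely $2$-divisible automorphic loop then part (i) of Lemma \lemref{odd-2div} immediately gives that $|Q|$ is odd. For the converse, if $Q$ is a finite automorphic loop of odd order, then since $Q$ satisfies the AAIP and is power-associative, the second assertion of part (ii) of Lemma \lemref{odd-2div} yields that $Q$ is uniquely $2$-divisible.

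There is no real obstacle here; the only thing to be careful about is to cite the right earlier results (Propositions \prpref{power-associative} and \prpref{AAIP}) to ensure that the two hypotheses of Lemma \lemref{odd-2div}(ii) — power-associativity and the AAIP — are both available, so that the equivalence of odd order and unique $2$-divisibility in the setting of automorphic loops is an immediate consequence rather than requiring any fresh argument.
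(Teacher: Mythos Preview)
Your proposal is correct and matches the paper's approach exactly: the corollary is stated without proof because it is an immediate specialization of Lemma \lemref{odd-2div}, using Propositions \prpref{power-associative} and \prpref{AAIP} to supply power-associativity and the AAIP for automorphic loops.
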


\begin{corollary}
\corlabel{even}
Let $Q$ be a finite automorphic loop of even order. Then $Q$ contains an element of order $2$.
\end{corollary}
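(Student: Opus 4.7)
The plan is to apply the standard parity argument for involutions to the inversion map $J : Q \to Q$, $x \mapsto x^{-1}$. This essentially runs the argument from Lemma \lemref{odd-2div}(i) in reverse.

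First I would observe that $Q$ has two-sided inverses by Proposition \prpref{AAIP} and is power-associative by Proposition \prpref{power-associative}, so $J$ is a well-defined involution of $Q$, i.e., $J^2 = \id_Q$. An element $x \in Q$ is then fixed by $J$ precisely when $x = x^{-1}$, equivalently $x^2 = 1$; by power-associativity, this holds if and only if $x$ is the identity or an element of order $2$. Thus the set $\Fix(J)$ consists of $1$ together with the order-$2$ elements, and our task reduces to showing $|\Fix(J)| \geq 2$.

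Next, since $J$ is an involution, its orbits on $Q$ have size $1$ or $2$, so $|Q|$ and $|\Fix(J)|$ have the same parity. Because $|Q|$ is even by hypothesis, $|\Fix(J)|$ is even; as $1 \in \Fix(J)$ is always a fixed point, there must exist at least one further fixed point, which is the desired element of order $2$.

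There is no real obstacle here: the corollary is the parity-reversed companion of Lemma \lemref{odd-2div}(i) and Corollary \corref{odd-2div}, and the entire argument hinges only on the AAIP (for $J^2 = \id_Q$) and power-associativity (to identify non-identity fixed points of $J$ as elements of order exactly $2$), both of which are already established for automorphic loops.
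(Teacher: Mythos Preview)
Your argument is correct and is essentially the same as the paper's: both rest on the parity of the inversion map $J$. The paper phrases it as a contrapositive through Corollary \corref{odd-2div} (no element of order $2$ $\Rightarrow$ uniquely $2$-divisible $\Rightarrow$ odd order), whereas you unpack the $J$-orbit parity count directly; as you note yourself, this is just Lemma \lemref{odd-2div}(i) run in reverse.
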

\begin{proof}
Otherwise, every element of $Q$ would have odd order, so that $Q$ would be uniquely $2$-divisible, and hence have odd order.
\end{proof}

\begin{lemma}
\lemlabel{subloops}
Let $Q$ be an automorphic loop of odd order with associated Bruck loop $(Q,\circ)$. If $S$ is a subloop of $Q$, then $S$ is a subloop of $(Q,\circ)$.
\end{lemma}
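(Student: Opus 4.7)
The plan is to reduce the statement to the closure condition already handled by Proposition \prpref{bruckloop}, which tells us that any subloop of $Q$ closed under square roots is automatically a subloop of $(Q,\circ)$. So the whole task boils down to showing that, given $x\in S$, the unique square root $x^{1/2}\in Q$ actually lies in $S$.

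First I would observe that $S$, being a subloop of the automorphic loop $Q$, is itself automorphic (since automorphic loops form a variety, as noted after Proposition \prpref{basic}). In particular $S$ is power-associative, has the AAIP, and is finite. Next I would argue that $|S|$ is odd: $Q$ has odd order, and by Lemma \lemref{odd-2div}(ii) no element of $Q$ has order $2$; hence no element of $S$ has order $2$, and applying the contrapositive of Corollary \corref{even} to the automorphic loop $S$ forces $|S|$ to be odd.

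With $|S|$ odd and $S$ automorphic, Corollary \corref{odd-2div} gives that $S$ is itself uniquely $2$-divisible. So for every $x\in S$ there exists some $y\in S$ with $y\cdot y=x$, where the product is taken in $S$ (equivalently in $Q$, since $S$ is a subloop). By unique $2$-divisibility of $Q$, this $y$ must coincide with the unique square root $x^{1/2}\in Q$. Therefore $x^{1/2}\in S$, which shows that $S$ is closed under square roots in $Q$.

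Invoking Proposition \prpref{bruckloop} completes the argument. The only mildly subtle point is the identification of square roots: one must be sure that a square root computed internally in $S$ is the same element as the square root computed in $Q$, and this follows from power-associativity together with the uniqueness of square roots in $Q$. I do not foresee any genuine obstacle beyond this bookkeeping.
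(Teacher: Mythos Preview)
Your proposal is correct and reduces to the same final step as the paper: show $S$ is closed under square roots and invoke Proposition \prpref{bruckloop}. The only difference is how you establish closure. You argue that $S$ is itself a finite automorphic loop with no element of order $2$, hence of odd order by the contrapositive of Corollary \corref{even}, hence uniquely $2$-divisible by Corollary \corref{odd-2div}, and then match up the internal and external square roots. The paper instead observes directly that in $Q$ every element $x$ has odd order, so $x^{1/2}=x^{(|x|+1)/2}$ is a positive integer power of $x$ and therefore lies in any subloop containing $x$. Your route is perfectly valid but goes through more machinery; the paper's observation avoids invoking unique $2$-divisibility of $S$ altogether and makes the closure immediate from power-associativity.
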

\begin{proof}
In this case, the square root of any element is a positive integer power of that element, and so subloops are closed under taking square roots. Then Proposition \prpref{bruckloop} applies.
\end{proof}

\begin{theorem}[Lagrange Theorem]
\thmlabel{lagrange}
Let $Q$ be an automorphic loop of odd order. If $S\leq Q$, then $|S|$ divides $|Q|$.
\end{theorem}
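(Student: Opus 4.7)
The plan is a direct transfer to the associated Bruck loop. Since $|Q|$ is odd, Corollary \corref{odd-2div} gives that $Q$ is uniquely $2$-divisible, so the associated Bruck loop $(Q,\circ)$ from Proposition \prpref{bruckloop} is defined. The underlying set of $(Q,\circ)$ is $Q$, so $(Q,\circ)$ is itself a Bruck loop of odd order $|Q|$.

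Next I would invoke Lemma \lemref{subloops}: any subloop $S\le Q$ is automatically a subloop of $(Q,\circ)$. This uses the fact that in the odd-order setting, the square root of an element is a positive integer power of that element, so $S$ is closed under square roots and Proposition \prpref{bruckloop} applies.

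Finally, I would apply Glauberman's Lagrange Theorem for Bruck loops of odd order \cite{Gl1,Gl2}: in any Bruck loop of odd order, the order of every subloop divides the order of the loop. Applied to $S\le (Q,\circ)$, this yields $|S|$ divides $|(Q,\circ)| = |Q|$, as required.

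There is essentially no obstacle here beyond checking that the hypotheses of each cited result line up. The work has already been done: Section \secref{bruck} constructs $(Q,\circ)$, Lemma \lemref{subloops} handles the subloop transfer, and Glauberman supplies the Lagrange statement on the Bruck side. The proof is therefore just a three-line chain of citations. The conceptually interesting point, already made in the introduction, is that odd order and unique $2$-divisibility coincide for finite automorphic loops (Corollary \corref{odd-2div}), which is precisely what allows Glauberman's Bruck-loop theory to be imported to the automorphic setting.
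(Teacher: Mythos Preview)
Your proposal is correct and matches the paper's proof essentially line for line: invoke Lemma \lemref{subloops} to see that $S$ is a subloop of the associated Bruck loop $(Q,\circ)$, then cite Glauberman's Lagrange result \cite[Cor.~4]{Gl1} for Bruck loops of odd order. The paper's version is slightly terser (it does not separately restate Corollary \corref{odd-2div}, since Lemma \lemref{subloops} already assumes odd order), but the content is identical.
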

\begin{proof}
By Lemma \lemref{subloops}, $S$ is a subloop of the associated Bruck loop $(Q,\circ)$. The result follows from \cite[Cor. 4]{Gl1}.
\end{proof}

Note that Theorem \thmref{lagrange}, sometimes called the weak Lagrange property, implies what is known as the strong Lagrange property for automorphic loops of odd order: if $T\leq S\leq Q$, then $|T|$ divides $|S|$. This is because subloops of automorphic loops of odd order are themselves automorphic loops of odd order.

\begin{theorem}[Cauchy Theorem]
\thmlabel{cauchy}
Let $Q$ be an automorphic loop of odd order. If a prime $p$ divides $|Q|$, then $Q$ contains an element of order $p$.
\end{theorem}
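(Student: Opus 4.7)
The plan is to mirror the proof of Theorem \thmref{lagrange} (the Lagrange Theorem) and reduce the statement to the corresponding theorem for Bruck loops of odd order, which is due to Glauberman. Concretely, I would proceed as follows.

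First, I form the associated Bruck loop $(Q,\circ)$ via Proposition \prpref{bruckloop}; this is legitimate because, by Corollary \corref{odd-2div}, an automorphic loop of odd order is uniquely $2$-divisible. The underlying set of $(Q,\circ)$ is $Q$ itself, so $(Q,\circ)$ is a Bruck loop of the same odd order as $Q$, and in particular $p$ divides $|(Q,\circ)|$.

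Second, I invoke Glauberman's Cauchy Theorem for Bruck loops of odd order (the same source used for the Lagrange Theorem, cited in the excerpt as \cite{Gl1}): there exists $x\in Q$ whose order in $(Q,\circ)$ is exactly $p$. Finally, by the powers clause of Proposition \prpref{bruckloop}, the powers of $x$ in $(Q,\circ)$ coincide with the powers of $x$ in $Q$. Hence the order of $x$ in $Q$ is also $p$, which is what was to be shown.

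There is essentially no obstacle here: the preparatory work --- building $(Q,\circ)$, verifying that powers are preserved, and establishing unique $2$-divisibility in odd order --- has already been carried out in Sections \secref{cores} and \secref{bruck}, so the proof is a two-line reduction to Glauberman's theorem, strictly parallel to the argument for \thmref{lagrange}.
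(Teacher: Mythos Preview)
Your proposal is correct and matches the paper's proof essentially verbatim: form the associated Bruck loop $(Q,\circ)$, apply Glauberman's Cauchy Theorem for Bruck loops of odd order \cite[Coro.~1, p.~394]{Gl1}, and transfer the element of order $p$ back to $Q$ via the powers clause of Proposition~\prpref{bruckloop}. The paper compresses this into two lines, but the content is identical.
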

\begin{proof}
By \cite[Coro 1, p. 394]{Gl1}, the associated Bruck loop $(Q,\circ)$ contains an element of order $p$ and thus so does $Q$ by Proposition \prpref{bruckloop}.
\end{proof}

\begin{corollary}
\corlabel{orderpisgroup}
Every automorphic loop of prime order is a group.
\end{corollary}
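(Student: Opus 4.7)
The plan is to split into two cases by the parity of the prime. If $|Q| = 2$, then the loop axioms force a unique multiplication table (the identity acts trivially on both sides and the remaining row/column entry must be $1$), so $Q$ is the cyclic group of order $2$; this case is immediate.

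Now suppose $|Q| = p$ is an odd prime. I would apply the Cauchy Theorem for automorphic loops of odd order (Theorem \ref{thm:cauchy}) to extract an element $x \in Q$ of order $p$. By power-associativity of automorphic loops (Proposition \ref{prp:power-associative}), the cyclic subloop $\langle x\rangle$ generated by $x$ is in fact a cyclic group, and it has order $p$. Since $|\langle x\rangle| = p = |Q|$, we conclude $\langle x\rangle = Q$, so $Q$ itself is a cyclic group of order $p$.

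An equally short alternative route avoids Cauchy: for odd $p$, the Lagrange Theorem (Theorem \ref{thm:lagrange}) forces every subloop of $Q$ to have order $1$ or $p$. Picking any non-identity element $x \in Q$, the subloop $\langle x\rangle$ is nontrivial, hence must equal $Q$, and power-associativity again makes $Q = \langle x\rangle$ a group. Either variant takes only a few lines; there is no substantive obstacle, as the work has already been done in establishing Theorems \ref{thm:lagrange} and \ref{thm:cauchy}, and the corollary is really just packaging their immediate consequence for prime orders.
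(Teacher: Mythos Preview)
Your proposal is correct and follows essentially the same approach as the paper: the paper's proof says only that the case $p=2$ is trivial and that the odd case follows from the Cauchy Theorem (Theorem~\ref{thm:cauchy}), and you have simply spelled out the implicit step (an element of order $p$ generates a cyclic group equal to $Q$ by power-associativity). Your alternative via Lagrange is equally valid but is not the route the paper chose.
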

\begin{proof}
This is trivial for $p = 2$, while for $p$ odd, it follows from Theorem \thmref{cauchy}.
\end{proof}

\section{A correspondence with Lie rings}
\seclabel{correspondence}

Following Wright \cite{Wright2}, if $(A,+,\cdot)$ is an algebra (over some field), define $(A,\diamond)$ by $x\diamond y = x + y - xy$. By \cite[Prop. 8]{Wright2}, $(A,\diamond)$ is a loop if and only if the mappings $y\mapsto y-yx$, $y\mapsto y-xy$ are bijections of $A$. We will now specialize this construction to Lie rings, and establish its partial inverse.

Recall that a \emph{Lie ring} $(Q,+,[\cdot,\cdot])$ is an abelian group $(Q,+)$ such that the bracket $[\cdot,\cdot]$ is biadditive, satisfies the Jacobi identity $[x,[y,z]]+[y,[z,x]]+[z,[x,y]]=0$, and is alternating, that is, $[x,x]=0$. Consequently, Lie rings are skew-symmetric, $[x,y]=-[y,x]$.

As usual, for $x\in Q$ define $\mathrm{ad}(x):Q\to Q$; $y\mapsto [y,x]$. Thanks to skew-symmetry, the mappings from Wright's construction take on the form
\begin{align*}
    r_x &= \mathrm{id}_Q-\mathrm{ad}(x);\; y\mapsto y-[y,x],\\
    \ell_x &= \mathrm{id}_Q+\mathrm{ad}(x);\; y\mapsto y+[y,x].
\end{align*}
Note that all $r_x$, $\ell_x$ are homomorphisms of $(Q,+)$.

In this context, Wright's construction can be stated as follows:

\begin{lemma}\label{Lm:LieWright}
Let $(Q,+,[\cdot,\cdot])$ be a Lie ring. Then $(Q,\diamond)$ defined by
\begin{displaymath}
    x\diamond y = x + y - [x,y]\tag{$\diamond$}
\end{displaymath}
is a loop (with neutral element $0$) if and only if $(Q,+,[\cdot,\cdot])$ satisfies \eqref{Eq:Wright}, that is, if and only if the mappings $r_x$, $\ell_x$ are invertible for every $x\in Q$.

For $x\in Q$ let $R_x^\diamond$, $L_x^\diamond$ be the right and left translation by $x$ in the groupoid $(Q,\diamond)$. Then
\begin{align*}
    &y\diamond x = yR_x^\diamond = x+yr_x,\\
    &x\diamond y = yL_x^\diamond = x+y\ell_x.
\end{align*}
If $(Q,\diamond)$ is a loop then also
\begin{align*}
    y(R_x^\diamond)^{-1} &= (y-x)r_x^{-1},\\
    y(L_x^\diamond)^{-1} &= (y-x)\ell_x^{-1},\\
    R_y^\diamond R_z^\diamond (R_{y\diamond z}^\diamond)^{-1} &= r_yr_zr_{y\diamond z}^{-1},\\
    L_y^\diamond L_z^\diamond (L_{z\diamond y}^\diamond)^{-1} &= \ell_y\ell_z\ell_{z\diamond y}^{-1},\\
    R_y^\diamond (L_y^\diamond)^{-1} &= r_y\ell_y^{-1}.
\end{align*}
\end{lemma}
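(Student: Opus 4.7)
The plan is a direct computation. Every assertion in the lemma reduces to unpacking the definition of $\diamond$ using only the biadditivity and skew-symmetry of the bracket; the Jacobi identity is not needed.

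First I would verify that $0$ is a two-sided neutral element, which is immediate from $[0,y]=[y,0]=0$. Next I would rewrite the translations directly from the definition. One has
\[
y \diamond x = y + x - [y,x] = x + (y - [y,x]) = x + yr_x,
\]
and, using skew-symmetry $[x,y]=-[y,x]$,
\[
x \diamond y = x + y + [y,x] = x + (y + [y,x]) = x + y\ell_x.
\]
This gives the stated formulas for $yR_x^\diamond$ and $yL_x^\diamond$, and shows that each translation is the composition of the additive endomorphism $r_x$ (respectively $\ell_x$) with the affine shift $u\mapsto x+u$ of $(Q,+)$. Since translation by a constant is always a bijection of $(Q,+)$, the translation $R_x^\diamond$ (respectively $L_x^\diamond$) is a bijection if and only if $r_x$ (respectively $\ell_x$) is. This proves the loop criterion, and solving $zr_x = y-x$ and $z\ell_x = y-x$ yields the formulas for $(R_x^\diamond)^{-1}$ and $(L_x^\diamond)^{-1}$.

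For the inner-mapping generators, I would compose right translations, using additivity of $r_z$:
\[
wR_y^\diamond R_z^\diamond = (y + wr_y)R_z^\diamond = z + (y + wr_y)r_z = z + yr_z + wr_yr_z.
\]
The key observation is that the constant term $z + yr_z$ is precisely $y\diamond z$, because $y\diamond z = y + z - [y,z] = (y - [y,z]) + z = yr_z + z$. Hence applying $(R_{y\diamond z}^\diamond)^{-1}\colon v \mapsto (v - y\diamond z)r_{y\diamond z}^{-1}$ absorbs this constant exactly and leaves the additive homomorphism $r_y r_z r_{y\diamond z}^{-1}$. The left-sided analogue is symmetric, using $z\diamond y = y\ell_z + z$. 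Finally, for $R_y^\diamond (L_y^\diamond)^{-1}$: applying $R_y^\diamond$ sends $w$ to $y + wr_y$, and then $(L_y^\diamond)^{-1}$ sends $v$ to $(v - y)\ell_y^{-1}$, so the composition is $w\mapsto w r_y \ell_y^{-1}$.

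There is no substantive obstacle here; the entire lemma is a bookkeeping exercise. The only mildly pleasant point is the cancellation $z + yr_z = y\diamond z$ (and its left analogue), which is exactly what makes the inner-mapping generators collapse to purely additive expressions in $r_\bullet, \ell_\bullet$, setting up the subsequent use of the Lie bracket in the automorphic condition \eqref{Eq:WrightAutomorphic}.
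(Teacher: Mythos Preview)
Your proof is correct and follows essentially the same approach as the paper: a direct unpacking of the definition of $\diamond$ using biadditivity and skew-symmetry, with the same key cancellation $z+yr_z=y\diamond z$ (and its left analogue) in the inner-mapping computation. The paper merely cites Wright for the loop criterion rather than spelling out the affine-shift argument, but otherwise the computations are identical.
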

\begin{proof}
The first part of the statement is a special case of Wright's result, and the formulae for the translations $R_x^\diamond$, $L_x^\diamond$ follow from ($\diamond$) and skew-symmetry. For the rest of the proof suppose that $(Q,\diamond)$ is a loop. We immediately get the formulae for $(R_x^\diamond)^{-1}$ and $(L_x^\diamond)^{-1}$. Finally, $xR_y^\diamond R_z^\diamond (R_{y\diamond z}^\diamond)^{-1} = (z+(y+xr_y)r_z - (y\diamond z))r_{y\diamond z}^{-1} = (z+yr_z+xr_yr_z - (z+yr_z))r_{y\diamond z}^{-1} = xr_yr_zr_{y\diamond z}^{-1}$, $xL_y^\diamond L_z^\diamond (L_{z\diamond y}^\diamond)^{-1} = (z+(y+x\ell_y)\ell_z - (z\diamond y))\ell_{z\diamond y}^{-1} = (z+y\ell_z + x\ell_y\ell_z - (z+y\ell_z))\ell_{z\diamond y}^{-1} = x\ell_y\ell_z\ell_{z\diamond y}^{-1}$, and $xR_y^\diamond (L_y^\diamond)^{-1} = ((y+xr_y)-y)\ell_y^{-1} = xr_y\ell_y^{-1}$.
\end{proof}

The Lie ring construction sometimes yields automorphic loops:

\begin{proposition}
\label{Pr:LieToAut}
Let $(Q,+,[\cdot,\cdot])$ be a Lie ring satisfying \eqref{Eq:Wright} and \eqref{Eq:WrightAutomorphic}. Then $(Q,\diamond)$ defined by $(\diamond)$ is an automorphic loop, and the commutant and nuclei of $(Q,\diamond)$ are given by
\begin{align*}
    C(Q,\dia) &= \setof{a\in Q}{2[a,x]=0,\ \forall x\in Q}\\
    N_{\lambda}(Q,\dia) &= \setof{a\in Q}{[[a,x],y] = 0,\ \forall x,\,y\in Q}\\
    N_{\mu}(Q,\dia) &= \setof{a\in Q}{[[x,y],a] = 0,\ \forall x,\,y\in Q}.
\end{align*}
In particular, $(Q,\diamond)$ is a group if and only if $[[x,y],z]=0$ for all $x$, $y$, $z\in Q$.
\end{proposition}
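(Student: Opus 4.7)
The plan is to rewrite the generators of the inner mapping group of $(Q,\dia)$ via Lemma \ref{Lm:LieWright} as compositions of the additive maps $r_x$ and $\ell_x$, then show that hypothesis \eqref{Eq:WrightAutomorphic} forces these maps to respect the Lie bracket; this immediately yields automorphicity. The commutant and nuclear formulas then fall out from direct expansion of $\dia$.

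By Lemma \ref{Lm:LieWright} together with \eqref{Eq:Wright}, $(Q,\dia)$ is a loop, and the inner mapping generators, viewed as permutations of $Q$, equal
\[
T_y = r_y\ell_y^{-1}, \qquad R_{y,z} = r_y r_z r_{y\dia z}^{-1}, \qquad L_{y,z} = \ell_y\ell_z \ell_{z\dia y}^{-1}.
\]
Each is already an additive endomorphism of $(Q,+)$, so to prove it is an automorphism of $(Q,\dia)=(Q,+-[\cdot,\cdot])$ it suffices to prove it preserves the bracket. Since Lie ring automorphisms form a group, it is enough to prove the key claim: under \eqref{Eq:WrightAutomorphic}, each $r_x$ (and by the same token each $\ell_x$) is a Lie ring automorphism. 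Expanding gives
\[
[yr_x, zr_x] \;=\; [y,z] \,-\, [y,[z,x]] \,-\, [[y,x],z] \,+\, [[y,x],[z,x]].
\]
The last term vanishes by \eqref{Eq:WrightAutomorphic}; applying the Jacobi identity to $[y,[z,x]]$ and using skew-symmetry yields $[y,[z,x]] + [[y,x],z] = [[y,z],x]$, whence $[yr_x, zr_x] = [y,z] - [[y,z],x] = [y,z]r_x$. The identity $[y\ell_x, z\ell_x] = [y,z]\ell_x$ is analogous. This establishes that $(Q,\dia)$ is automorphic.

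For the commutant and nuclei I would expand $\dia$ directly. From $a\dia x - x\dia a = -2[a,x]$ one reads off $C(Q,\dia)$. For $N_\lambda$, expanding $(a\dia x)\dia y = a\dia(x\dia y)$ and cancelling additive terms reduces the condition to $[[a,x],y] - [a,[x,y]] = 0$ for all $x,y$; applying Jacobi to $[a,[x,y]]$ and using skew-symmetry shows this difference equals $-[x,[a,y]]$, so $a\in N_\lambda(Q,\dia)$ iff $[[a,y],x]=0$ for all $x,y$, which is the asserted formula after renaming. A parallel expansion of $(x\dia a)\dia y = x\dia(a\dia y)$ reduces to $[a,[x,y]]=0$, giving the formula for $N_\mu(Q,\dia)$. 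Since $(Q,\dia)$ is automorphic, Proposition \prpref{nuclei} yields $N(Q,\dia)=N_\lambda(Q,\dia)$, so $(Q,\dia)$ is a group if and only if $N_\lambda(Q,\dia)=Q$, that is, $[[x,y],z]=0$ for all $x,y,z\in Q$.

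The main obstacle is the key computation that $r_x$ preserves the bracket; this is the single place where both the Jacobi identity and the full strength of \eqref{Eq:WrightAutomorphic} are used, and everything else in the proof is a bookkeeping expansion of the operation $\dia$.
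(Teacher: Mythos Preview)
Your proof is correct and follows essentially the same route as the paper: both show that each $r_x$ and $\ell_x$ is an automorphism of $(Q,\dia)$ via the same expansion using Jacobi and \eqref{Eq:WrightAutomorphic}, then invoke Lemma~\ref{Lm:LieWright} to conclude that the inner mapping generators lie in $\langle r_x,\ell_x\rangle\le\Aut(Q,\dia)$. The only cosmetic differences are that you phrase the key computation as ``$r_x$ preserves the bracket'' rather than ``$r_x$ preserves $\dia$'' (equivalent, since $r_x$ is additive), and the paper computes the associator $(x\dia y)\dia z - x\dia(y\dia z) = [[x,z],y]$ once and specializes, whereas you treat $N_\lambda$ and $N_\mu$ separately.
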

\begin{proof}
By Lemma \ref{Lm:LieWright}, $(Q,\diamond)$ is a loop. For all $x$, $y$, $z\in Q$, we have
\begin{align*}
    x r_z\dia y r_z
    &= x - [x,z] + y - [y,z] - [x-[x,z],y-[y,z]] \\
    &= x + y - [x,y] - [x + y,z] + [x,[y,z]] + [[x,z],y] - [[x,z],[y,z]] \\
    &= x + y - [x,y] - [x + y,z] + [[x,y],z] \\
    &= (x\dia y) r_z \,,
\end{align*}
where we have used both the Jacobi identity and the condition \eqref{Eq:WrightAutomorphic} in the third equality. Thus for each $z\in Q$ we have $r_z\in\Aut(Q,\dia)$. Similarly, $\ell_z\in\Aut(Q,\dia)$. By Lemma \eqref{Lm:LieWright}, the standard generators $R_y^\diamond R_z^\diamond (R_{y\diamond z}^\diamond)^{-1}$, $L_y^\diamond L_z^\diamond (L_{z\diamond y}^\diamond)^{-1}$ and $R_y^\diamond (L_y^\diamond)^{-1}$ of $\Inn(Q,\diamond)$ are elements of $\sbl{r_x,\,\ell_x\ |\ x\in Q}\le \Aut(Q,\dia)$, and hence $(Q,\diamond)$ is an automorphic loop.

The characterization of the commutant is clear from ($\dia$). For the nuclei, we compute
\[
\big((x\dia y)\dia z\big) - \big(x\dia (y\dia z)\big) =
[[x,y],z] - [x,[y,z]] = [[x,z],y]\,,
\]
using the Jacobi identity. Thus a triple $x$, $y$, $z$ associates in $(Q,\dia)$ if and only if $[[x,z],y] = 0$. All remaining claims easily follow.
\end{proof}

\begin{corollary}
\corlabel{glie}
Let $Q$ be a Lie ring satisfying \eqref{Eq:Wright} and \eqref{Eq:WrightAutomorphic}, and let $(Q,\dia)$ be the corresponding automorphic loop.
\begin{enumerate}
\item[(i)] If $Q$ has characteristic $2$, then $(Q,\dia)$ is commutative.
\item[(ii)] If the abelian group $(Q,+)$ is uniquely $2$-divisible, then $C(Q,\diamond) = Z(Q,\diamond)$ is equal to the center of the Lie ring $Q$.
\end{enumerate}
\end{corollary}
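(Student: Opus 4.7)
The plan for part (i) is essentially a one-liner: in characteristic $2$, the skew-symmetry identity $[x,y] = -[y,x]$ collapses to symmetry $[x,y] = [y,x]$, since $-1 = 1$. Hence
\[
x\dia y = x + y - [x,y] = y + x - [y,x] = y \dia x,
\]
so $(Q,\dia)$ is commutative.

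For part (ii), my strategy is to read everything off the explicit commutant and nuclear descriptions already supplied by Proposition \ref{Pr:LieToAut}, combined with Proposition \prpref{nuclei}, which in any automorphic loop gives $N_\lambda = N_\rho \subseteq N_\mu$. First I compute $C(Q,\dia)$ itself: Proposition \ref{Pr:LieToAut} describes it as $\{a \in Q : 2[a,x] = 0 \text{ for all } x \in Q\}$, and unique $2$-divisibility of $(Q,+)$ says the doubling map on $Q$ is a bijection, hence injective. Thus $2[a,x] = 0$ forces $[a,x] = 0$, and $C(Q,\dia)$ reduces to $\{a \in Q : [a,x] = 0 \text{ for all } x \in Q\}$, which is by definition the center of the Lie ring $Q$.

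The remaining task is $Z(Q,\dia) = C(Q,\dia)$, where, per the standard loop-theoretic convention, $Z(Q,\dia) = C(Q,\dia) \cap N(Q,\dia)$. The inclusion $Z(Q,\dia) \subseteq C(Q,\dia)$ is tautological, so I only need $C(Q,\dia) \subseteq N(Q,\dia)$. Given $a \in C(Q,\dia)$, the first step produces $[a,x] = 0$ for every $x$, which trivially gives $[[a,x],y] = 0$ for all $x$, $y$, placing $a$ in $N_\lambda(Q,\dia)$ by the nuclear formula of Proposition \ref{Pr:LieToAut}. Skew-symmetry then yields $[[x,y],a] = -[a,[x,y]] = 0$, so $a \in N_\mu(Q,\dia)$ as well. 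Proposition \prpref{nuclei} absorbs $N_\rho$ at no additional cost.

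I do not anticipate any genuine obstacle; the corollary is essentially bookkeeping on top of Proposition \ref{Pr:LieToAut}. The only points that deserve a moment of care are applying unique $2$-divisibility to the additive group $(Q,+)$ rather than to the loop $(Q,\dia)$ (for which the property has not been assumed), and invoking Proposition \prpref{nuclei} so that membership in $N_\lambda$ and $N_\mu$ already guarantees membership in the full nucleus $N(Q,\dia)$.
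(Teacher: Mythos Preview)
Your proposal is correct and is precisely the argument the paper has in mind: the corollary is stated without proof because it follows by direct bookkeeping from the commutant and nuclear descriptions in Proposition~\ref{Pr:LieToAut}, together with Proposition~\prpref{nuclei}. One minor simplification: once you have $a\in N_\lambda(Q,\dia)$, Proposition~\prpref{nuclei} already gives $N_\lambda = N_\rho \subseteq N_\mu$, so the separate verification that $a\in N_\mu(Q,\dia)$ is redundant (though harmless).
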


For the rest of this section we will be concerned with the question of whether it is possible to invert the construction of Proposition \ref{Pr:LieToAut} to obtain a Lie ring satisfying \eqref{Eq:Wright} and \eqref{Eq:WrightAutomorphic} from an automorphic loop. We identify suitable subclasses of Lie rings and automorphic loops when this is indeed the case.

For the rest of this section we will deal with uniquely $2$-divisible automorphic loops $Q$ for which the associated Bruck loop $(Q,\circ)$ is a group, hence an abelian group. For $x$ in such a loop $Q$, define the inner mapping
\[
\phi_x = R_x P_{x^{1/2}}\inv.   \tag{$\phi$}
\]
We will make heavy use of the fact that $\phi_x\in \Aut(Q)$, often without explicit reference.

\begin{lemma}
\lemlabel{op_corres}
Let $Q$ be a uniquely $2$-divisible automorphic loop for which the associated Bruck loop $(Q,\circ)$ is an abelian group. For all $x$, $y\in Q$, the following identities hold:
\begin{align}
xy &= (x)\phi_y \circ y,  \eqnlabel{op_corres} \\
x\circ (y\inv)\phi_x &= y\inv \circ (x)\phi_y.  \eqnlabel{flip}
\end{align}
\end{lemma}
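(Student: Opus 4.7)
The plan is to establish \eqnref{op_corres} first by direct manipulation of the $P$-operators, and then to deduce \eqnref{flip} from it using the AAIP of $Q$ together with the AIP of the Bruck loop $(Q,\circ)$.

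For \eqnref{op_corres}, I would begin by rewriting $(x)\phi_y = (xy)P_{y^{-1/2}}$, using $P_{y^{1/2}}^{-1} = P_{y^{-1/2}}$, which follows from \eqnref{Pinv} and Corollary \corref{Ppowers}. Setting $v = (x)\phi_y$, the definition of $\circ$ reduces the identity to showing $y^2 P_v = (xy)^2$. By the twisted-subgroup relation \eqnref{Ptwisted}, $P_v = P_{y^{-1/2}} P_{xy} P_{y^{-1/2}}$. A short computation using $P_a = L_{a^{-1}}^{-1} R_a$ and power-associativity shows that $P_{y^{1/2}}$ shifts powers of $y$, namely $(y^m) P_{y^{1/2}} = y^{m+1}$, from which $y^2 P_{y^{-1/2}} = y$. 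The goal thus becomes $y P_{xy} = (xy)^2 P_{y^{1/2}}$. By the definition of $\circ$, the left side equals $((xy)\circ y^{1/2})^2$ and the right side equals $(y^{1/2}\circ (xy))^2$; these agree because $(Q,\circ)$ is abelian. Taking square roots and invoking unique $2$-divisibility yields \eqnref{op_corres}.

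For \eqnref{flip}, I would invert both sides of \eqnref{op_corres}. By Proposition \prpref{AAIP}, $(xy)^{-1} = y^{-1} x^{-1}$, and by \eqnref{op_corres} applied to the pair $(y^{-1}, x^{-1})$ this equals $(y^{-1})\phi_{x^{-1}} \circ x^{-1}$. On the other hand, the AIP of the Bruck loop $(Q,\circ)$ together with the fact that $\phi_y \in \Aut(Q)$ preserves inverses give $((x)\phi_y \circ y)^{-1} = (x^{-1})\phi_y \circ y^{-1}$. Equating the two expressions for $(xy)^{-1}$ and then applying commutativity of $(Q,\circ)$ yields $x^{-1}\circ (y^{-1})\phi_{x^{-1}} = y^{-1}\circ (x^{-1})\phi_y$. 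Since inversion is a bijection on $Q$, replacing $x$ throughout by $x^{-1}$ produces \eqnref{flip}.

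The main obstacle is the manipulation in \eqnref{op_corres}: one must carefully track how the twisted-subgroup identity interacts with the shift property $(y^m)P_{y^{1/2}} = y^{m+1}$, and recognize that the remaining equality is precisely commutativity of $(Q,\circ)$. Note that associativity of $(Q,\circ)$ is never used; what is really needed is the combination of commutativity and the AIP, both of which are furnished by the abelian-group hypothesis. Once \eqnref{op_corres} is in hand, \eqnref{flip} is a fairly formal consequence via the AAIP.
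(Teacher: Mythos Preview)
Your argument is correct. For \eqnref{flip}, your derivation is essentially identical to the paper's: both invert \eqnref{op_corres} via the AAIP, use the AIP of $(Q,\circ)$ together with $\phi_y\in\Aut(Q)$ to compute $((x)\phi_y\circ y)^{-1}$, and finish by replacing $x$ with $x^{-1}$.

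For \eqnref{op_corres} the two routes differ. The paper works at the ``squared'' level: it writes $x^2y^2=(x^2)\phi_{y^2}P_y=[(x)\phi_{y^2}\circ y]^2$ and then invokes the abelian group structure of $(Q,\circ)$ to split the square as $((x)\phi_{y^2})^2\circ y^2=(x^2)\phi_{y^2}\circ y^2$, after which the substitution $x\mapsto x^{1/2}$, $y\mapsto y^{1/2}$ finishes. You instead expand $P_{(x)\phi_y}=P_{y^{-1/2}}P_{xy}P_{y^{-1/2}}$ via \eqnref{Ptwisted}, use the shift $(y^m)P_{y^{-1/2}}=y^{m-1}$, and reduce the claim to $((xy)\circ y^{1/2})^2=(y^{1/2}\circ(xy))^2$, which is bare commutativity of $\circ$. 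This buys you exactly what you point out: your proof of \eqnref{op_corres} uses only commutativity of $(Q,\circ)$, whereas the paper's square-splitting step $(a\circ b)^2=a^2\circ b^2$ tacitly leans on the full abelian group hypothesis. The paper's computation is shorter, but your version isolates the minimal hypothesis more cleanly.
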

\begin{proof}
For all $x$, $y\in Q$,
\[
x^2 y^2 = (x^2)R_{y^2} P_y\inv P_y = (x^2)\phi_{y^2} P_y
= [((x)\phi_{y^2})^2]P_y = [(x)\phi_{y^2} \circ y]^2\,.
 \]
Since $(Q,\circ)$ is an abelian group and powers in $(Q,\cdot)$, $(Q,\circ)$ coincide, we have $x^2 y^2 = ((x)\phi_{y^2})^2 \circ y^2 = (x^2)\phi_{y^2}\circ y^2$. Replacing $x$ with $x^{1/2}$ and $y$ with $y^{1/2}$, we obtain \eqnref{op_corres}.

Now using AAIP, we have
\[
(y\inv)\phi_{x\inv}\circ x\inv = y\inv x\inv = (xy)\inv = [(x)\phi_y\circ y]\inv
= (x\inv)\phi_y \circ y\inv\,.
\]
Replacing $x$ with $x\inv$, we obtain \eqnref{flip}.
\end{proof}

\begin{lemma}
\lemlabel{Pgroup}
Let $Q$ be a uniquely $2$-divisible automorphic loop for which the associated Bruck loop $(Q,\circ)$ is an abelian group. Then $P_Q = \sbl{P_Q}$ is an abelian group isomorphic to $(Q,\circ)$. In particular, for all $x$, $y\in Q$,
\begin{equation}
\eqnlabel{Pgroup}
    P_x P_y = P_{x\circ y}\,.
\end{equation}
\end{lemma}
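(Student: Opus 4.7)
The plan is to identify each $P_x$ explicitly as a left translation in the Bruck loop $(Q,\circ)$, namely as the left $\circ$-translation by $x^2$; once this identification is in hand, the identity \eqnref{Pgroup} and the subgroup/isomorphism claims become essentially immediate consequences of the abelianness of $(Q,\circ)$.

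The key preliminary step I would carry out is to prove that $(z)P_x = x^2\circ z$ for all $x,z\in Q$. The idea is to substitute $y = z^{1/2}$ into the defining formula $x\circ y = [(y^2)P_x]^{1/2}$ from $(\circ)$, obtaining $x\circ z^{1/2} = [(z)P_x]^{1/2}$, and therefore
\[
(z)P_x = (x\circ z^{1/2})^2.
\]
Since squaring in $(Q,\circ)$ agrees with squaring in $(Q,\cdot)$ by Proposition \prpref{bruckloop}, and since $(Q,\circ)$ is an abelian group, I may expand the right-hand side as $x^{\circ 2}\circ (z^{1/2})^{\circ 2} = x^2\circ z$, completing the identification.

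Given this, establishing \eqnref{Pgroup} is a two-line computation: for any $z\in Q$,
\[
(z)P_xP_y = y^2\circ(x^2\circ z) = (x\circ y)^2\circ z = (z)P_{x\circ y},
\]
using the identity $x^2\circ y^2 = (x\circ y)^2$ valid in any abelian group. Closure of $P_Q$ under composition now follows, and together with $P_1 = \id_Q$ and $P_x^{-1} = P_{x^{-1}}\in P_Q$ (from \eqnref{Pinv}) this gives $\sbl{P_Q} = P_Q\le\Mlt(Q)$. By Lemma \lemref{Pcorrespond} the map $x\mapsto P_x$ is a bijection $Q\to P_Q$, and by \eqnref{Pgroup} it is a group homomorphism from $(Q,\circ)$, hence an isomorphism; abelianness of $P_Q$ is inherited from $(Q,\circ)$.

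The main obstacle is conceptual rather than computational: one must recognize that although the defining formula for $\circ$ looks asymmetric in $x$ and $y$, its evaluation at $y = z^{1/2}$, together with the coincidence of squaring in the two operations and the assumed abelianness of $(Q,\circ)$, exactly amounts to saying that $P_x$ is the left $\circ$-translation by $x^2$. After this insight, the remaining verifications reduce to routine manipulations in an abelian group.
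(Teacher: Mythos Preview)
Your argument is correct. The key identity $(z)P_x = x^2\circ z$ follows exactly as you outline from the defining formula for $\circ$, Proposition~\prpref{bruckloop}, and the assumed abelianness of $(Q,\circ)$; the rest is routine.

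Your route differs slightly from the paper's. The paper first invokes Lemma~\lemref{LMltBruck} (that $\LMlt(Q,\circ)$ is conjugate to $\sbl{P_Q}$ via the squaring map) together with $(Q,\circ)\cong\LMlt(Q,\circ)$ to conclude that $\sbl{P_Q}$ is an abelian group isomorphic to $(Q,\circ)$; only then does it verify \eqnref{Pgroup} by computing inside this abelian group via the $\bullet$-operation: $P_{x\circ y}=P_x\bullet P_y=(P_xP_y^2P_x)^{1/2}=(P_x^2P_y^2)^{1/2}=P_xP_y$. You instead specialize the conjugacy relation directly to the abelian case, obtaining the explicit identification $P_x=L^{\circ}_{x^2}$, from which \eqnref{Pgroup} and the subgroup and isomorphism claims all fall out at once. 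Your version is a bit more concrete and avoids the detour through $\bullet$, while the paper's version makes clearer why the result should be expected from the general Bruck-loop machinery already set up; both are short and the underlying content is the same.
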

\begin{proof}
Since $(Q,\circ)$ is an abelian group, $(Q,\circ)\cong \LMlt(Q,\circ)\cong \sbl{P_Q}$ by Lemma \lemref{LMltBruck}. For \eqnref{Pgroup}, we have $P_{x\circ y} = P_x\bullet P_y = (P_x P_y^2 P_x)^{1/2} = (P_x^2 P_y^2)^{1/2} = P_x P_y$, since $\sbl{P_Q}$ is an abelian group.
\end{proof}

\begin{lemma}
\lemlabel{phigenerate}
Let $Q$ be a uniquely $2$-divisible automorphic loop for which the associated Bruck loop $(Q,\circ)$ is an abelian group. Then $\sbl{\phi_x\ |\ x\in Q} = \Inn(Q)$.
\end{lemma}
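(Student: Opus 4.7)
The plan is to prove the two inclusions separately. For $\sbl{\phi_x\ |\ x\in Q} \le \Inn(Q)$, note that $\phi_x = R_x P_{x^{1/2}}^{-1}$ lies in $\Mlt(Q)$ by construction, and a short power-associative calculation gives $(1)\phi_x = 1$, so $\phi_x \in (\Mlt(Q))_1 = \Inn(Q)$. Since $Q$ is automorphic, we use $\phi_x \in \Aut(Q)$ below without further comment.

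For the reverse inclusion, the strategy is to express each of the standard generators $R_{x,y}$, $L_{x,y}$, $T_x$ of $\Inn(Q)$ as a product of $\phi$'s. The starting point is the factorization $R_x = \phi_x P_{x^{1/2}}$, immediate from the definition of $\phi_x$. The crucial identity needed to handle $R_{x,y}$ is
\[
(xy)^{1/2} = (x^{1/2})\phi_y \circ y^{1/2},
\]
which I plan to derive from \eqnref{op_corres} by taking $\circ$-square-roots in the abelian group $(Q,\circ)$ and using that the automorphism $\phi_y$ commutes with the $1/2$-power map (since $Q$ is power-associative and powers in $(Q,\circ)$ and $(Q,\cdot)$ agree). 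Combining this with the multiplicativity \eqnref{Pgroup} of $P$ on $(Q,\circ)$ and the standard conjugation formula $\sigma^{-1}P_z\sigma = P_{(z)\sigma}$ for $\sigma \in \Aut(Q)$ yields
\[
P_{(xy)^{1/2}} = \phi_y^{-1} P_{x^{1/2}} \phi_y\, P_{y^{1/2}}.
\]
Substituting this into $R_x R_y R_{xy}^{-1}$ and cancelling the $P$-factors then gives the clean formula $R_{x,y} = \phi_x \phi_y \phi_{xy}^{-1}$.

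For $T_x$ and $L_{x,y}$, I first establish that $J$ centralizes each $\phi_x$. Using the AAIP identities \eqnref{aaip1}, \eqnref{aaip2} and \eqnref{Pinv}, together with $L_{x^{-1}} = R_x P_x^{-1}$ from $(P)$ and $P_x = P_{x^{1/2}}^2$ from Corollary~\corref{Ppowers}, a short direct computation yields
\[
\phi_x^J = L_{x^{-1}} P_{x^{1/2}} = R_x P_x^{-1} P_{x^{1/2}} = R_x P_{x^{1/2}}^{-1} = \phi_x.
\]
Conjugating the factorization $R_{x^{-1}} = \phi_{x^{-1}} P_{x^{-1/2}}$ by $J$ then produces $L_x = R_{x^{-1}}^J = \phi_{x^{-1}} P_{x^{1/2}}$, so $T_x = R_x L_x^{-1} = \phi_x \phi_{x^{-1}}^{-1}$. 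Finally, \eqnref{RxyLxy} reads $R_{x,y} = L_{x^{-1}, y^{-1}}$; equivalently $L_{x,y} = R_{x^{-1}, y^{-1}}$, which is a product of $\phi$'s by the preceding formula. This exhausts the standard generators, giving $\Inn(Q) \le \sbl{\phi_x\ |\ x\in Q}$.

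The main obstacle is the identity $(xy)^{1/2} = (x^{1/2})\phi_y \circ y^{1/2}$: once it is in place, the remaining computations are essentially mechanical rewriting inside the abelian group $\sbl{P_Q}$, combined with the AAIP-driven centralization of $J$ by each $\phi_x$.
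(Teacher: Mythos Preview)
Your proof is correct and follows essentially the same route as the paper: both use the factorization $R_x = \phi_x P_{x^{1/2}}$, the identity $(x^{1/2})\phi_y \circ y^{1/2} = (xy)^{1/2}$ from \eqnref{op_corres}, and \eqnref{Pgroup} to obtain $R_{x,y} = \phi_x \phi_y \phi_{xy}^{-1}$, then handle $L_{x,y}$ via \eqnref{RxyLxy} and arrive at $T_x = \phi_x \phi_{x^{-1}}^{-1}$. The only cosmetic difference is that for $T_x$ you first establish $\phi_x^J = \phi_x$ and deduce $L_x = \phi_{x^{-1}} P_{x^{1/2}}$, whereas the paper goes directly through $L_x^{-1} = P_{x^{-1}} R_{x^{-1}}^{-1}$; both computations are short and yield the same formula.
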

\begin{proof}
One inclusion is obvious. We have
\begin{equation}
\eqnlabel{phigen1}
R_{x,y} = R_x R_y R_{xy}\inv = \phi_x P_{x^{1/2}} \phi_y P_{y^{1/2}} P_{(xy)^{1/2}}\inv \phi_{xy}\inv
= \phi_x \phi_y P_{(x^{1/2})\phi_y} P_{y^{1/2}} P_{(xy)^{1/2}}\inv \phi_{xy}\inv\,,
\end{equation}
since $\phi_y\in \Aut(Q)$. Now by \eqnref{Pgroup}, $P_{(x^{1/2})\phi_y} P_{y^{1/2}} = P_{(x^{1/2})\phi_y\circ y^{1/2}}$. By the fact that $(Q,\circ)$ is an abelian group and \eqnref{op_corres}, $(x^{1/2})\phi_y\circ y^{1/2} = [(x)\phi_y\circ y]^{1/2} = (xy)^{1/2}$. Thus \eqnref{phigen1} reduces to $R_{x,y} = \phi_x \phi_y \phi_{xy}\inv$. By \eqnref{RxyLxy}, $L_{x,y} = R_{x\inv,y\inv} = \phi_{x\inv} \phi_{y\inv} \phi_{x\inv y\inv}\inv$. Finally,
\[
T_x = R_x L_x\inv = \phi_x P_{x^{1/2}} L_x\inv = \phi_x P_{x^{1/2}} P_{x\inv} R_{x\inv}\inv = \phi_x P_{x^{-1/2}} R_{x\inv}\inv = \phi_x \phi_{x\inv}\inv\,,
\]
where we used \eqnref{Pgroup} and $x^{1/2}\circ x\inv = x^{-1/2}$ in the fourth equality. It follows that $\Inn(Q)\leq \sbl{\phi_x\ |\ x\in Q}$.
\end{proof}

A Lie ring $(Q,+,[\cdot,\cdot])$ is said to be \emph{uniquely $2$-divisible} if the abelian group $(Q,+)$ is uniquely $2$-divisible.

\begin{theorem}[Partial correspondence between Lie rings and automorphic loops]
\thmlabel{correspondence}
Suppose that $(Q,+,[\cdot,\cdot])$ is a uniquely $2$-divisible Lie ring satisfying \eqref{Eq:Wright} and \eqref{Eq:WrightAutomorphic}. Then $(Q,\dia)$ defined by
\begin{displaymath}
    x\diamond y = x+y-[x,y]
\end{displaymath}
is a $2$-divisible automorphic loop whose associated Bruck loop $(Q,\circ)$ is an abelian group; in fact, $(Q,\circ) = (Q,+)$.

Conversely, suppose that $(Q,\cdot)$ is a uniquely $2$-divisible automorphic loop whose associated Bruck loop $(Q,\circ)$ is an abelian group. Then $(Q,\circ,[\cdot,\cdot])$ defined by
\[
[x,y] = x\circ y\circ (xy)\inv\tag{$[\cdot,\cdot]$}
\]
is a uniquely $2$-divisible Lie ring satisfying \eqref{Eq:Wright} and \eqref{Eq:WrightAutomorphic}.

Furthermore, the two constructions are inverses of each other. Subrings (resp. ideals) of the Lie ring are subloops (resp. normal subloops) of the corresponding automorphic loop, and subloops (resp. normal subloops) closed under square roots are subrings (resp. ideals) of the corresponding Lie ring.
\end{theorem}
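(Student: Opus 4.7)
The plan handles the forward direction (Lie ring $\to$ loop) via Proposition~\ref{Pr:LieToAut} plus a direct calculation of the associated Bruck loop, and the reverse direction (loop $\to$ Lie ring) by expressing the bracket in terms of the inner mappings $\phi_y$.

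For the forward direction, Proposition~\ref{Pr:LieToAut} already delivers that $(Q,\diamond)$ is automorphic. Unique $2$-divisibility follows from $x\diamond x = 2x-[x,x] = 2x$. To identify the associated Bruck loop with $(Q,+)$, I would compute $P_x$ in $(Q,\diamond)$ using $x^{-1}=-x$ (from $x\diamond(-x)=0$), Lemma~\ref{Lm:LieWright}, and the observation $r_x^{-1}(x)=x$ (since $xr_x = x - [x,x] = x$); a short calculation then yields $yP_x = y+2x$. Combined with $y^2=2y$ and Proposition~\prpref{bruckloop} (powers coincide, so square roots in $(Q,\diamond)$ are halves in $(Q,+)$), this gives $y\circ x = ((y^2)P_x)^{1/2} = (2y+2x)^{1/2} = x+y$.

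For the reverse direction I work additively in $(Q,\circ) = (Q,+)$. Lemma~\lemref{op_corres} in additive form reads $xy = \phi_y(x)+y$, which reduces the bracket to $[x,y] = x - \phi_y(x)$. Linearity of $[\cdot,y]$ is then immediate from Lemma~\lemref{bruck-aut} (each $\phi_y \in \Aut(Q,+)$); alternation $[x,x]=0$ reduces to $\phi_x(x) = x$, which is the short calculation $(x)P_{x^{1/2}} = x^2$; and skew-symmetry $\phi_y(x) + \phi_x(y) = x+y$ follows from \eqnref{flip} upon recalling that inverses in $(Q,\cdot)$ and $(Q,\circ)$ coincide (Proposition~\prpref{bruckloop}) and using additivity of $\phi_x$ on $(Q,+)$. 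Biadditivity in the second slot then comes from skew-symmetry. The Lie ring inherits unique $2$-divisibility from $(Q,+)$, and \eqref{Eq:Wright} is automatic because $x\diamond y = x+y-[x,y] = xy$ makes $(Q,\diamond) = (Q,\cdot)$ a loop.

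The crux is the Jacobi identity together with \eqref{Eq:WrightAutomorphic}. My key observation is that $\phi_z \in \Aut(Q,\cdot)$ forces $\phi_z$ to preserve the bracket: applying $\phi_z$ to $xy = x+y-[x,y]$ and using additivity on $(Q,+)$ yields $\phi_z([x,y]) = [\phi_z(x), \phi_z(y)]$. Writing $\phi_z = \id - \ad(z)$ and expanding via biadditivity collapses this to the single identity
\begin{displaymath}
    [[x,y],z] - [[x,z],y] - [x,[y,z]] = -[[x,z],[y,z]].
\end{displaymath}
The left-hand side is the Jacobi combinator (modulo skew-symmetry) and the right-hand side vanishes precisely when \eqref{Eq:WrightAutomorphic} holds, so the two conditions are coupled and the main obstacle is establishing one of them on its own. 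I would target \eqref{Eq:WrightAutomorphic} directly --- equivalently, show that $\phi_c$ acts as the identity on $[Q,z]$ whenever $c \in [Q,z]$ --- via the conjugation identity $\phi_z \phi_y \phi_z^{-1} = \phi_{\phi_z(y)}$ (itself a restatement of $\phi_z \in \Aut(Q,\cdot)$); turning this into a clean fixed-point argument is the main technical difficulty. Once \eqref{Eq:WrightAutomorphic} is in hand, Jacobi drops out of the displayed identity. The mutual-inverse claim reduces to $x\circ y - (x\diamond y) = [x,y]$ and $x\diamond y = xy$, both one-line computations, and the subobject correspondence follows from $xy = x+y-[x,y]$ and $[x,y] = x+y-xy$, together with Proposition~\prpref{bruckloop} (for the direction requiring square-root closure) and the observation that both normality in $(Q,\cdot)$ and ideal-ness in the Lie ring are characterized by invariance under $\{\phi_y : y\in Q\}$.
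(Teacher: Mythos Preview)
Your plan tracks the paper's proof closely: the forward direction is the same (your computation $yP_x=y+2x$ is a minor variant of the paper's), and in the reverse direction you arrive at exactly the paper's key identity---in your notation, Jacobiator$(x,y,z)=-[[x,z],[y,z]]$, which is the paper's \eqnref{lietmp} up to the sign coming from your use of $\phi_z=\id-\ad(z)$ rather than the paper's $\phi_{z\inv}=\id+\ad(z)$. The substructure and mutual-inverse arguments are likewise the same, your appeal to ``normality $\Leftrightarrow$ invariance under $\{\phi_y\}$'' being precisely Lemma~\lemref{phigenerate}.

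The one genuine gap is the step you flag yourself: establishing \eqref{Eq:WrightAutomorphic}. Your proposed route via the conjugation identity $\phi_z\inv\phi_y\phi_z=\phi_{(y)\phi_z}$ is not obviously easier---that identity is equivalent to $\phi_z$ preserving the bracket, which is what you have already used to get the displayed relation, so it carries no new information. The paper's resolution is short and worth knowing: since the Jacobiator is invariant under cyclic permutations of $(x,y,z)$, so is the right-hand side of your displayed identity, giving
\[
[[x,z],[y,z]]=[[y,x],[z,x]].
\]
Linearize this in $x$ (replace $x$ by $x+u$, expand by biadditivity, and cancel the two instances of the identity itself) to obtain $[[y,u],[z,x]]+[[y,x],[z,u]]=0$; setting $u=y$ gives $[[y,x],[z,y]]=0$, which is \eqref{Eq:WrightAutomorphic}. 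Jacobi then follows from your displayed identity.

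Incidentally, there is an even quicker finish hiding in your own setup: running your expansion with $\phi_{z\inv}=\id+\ad(z)$ instead of $\phi_z=\id-\ad(z)$ flips the sign of the right-hand side, so the Jacobiator equals both $-[[x,z],[y,z]]$ and $+[[x,z],[y,z]]$; unique $2$-divisibility then kills $[[x,z],[y,z]]$ immediately.
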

\begin{proof}
Suppose that $(Q,+,[\cdot,\cdot])$ is a uniquely $2$-divisible Lie ring satisfying \eqref{Eq:Wright} and \eqref{Eq:WrightAutomorphic}. By Proposition \ref{Pr:LieToAut}, $(Q,\diamond)$ is an automorphic loop. Note that $x\diamond x = 2x$, $x^{-1}=-x$ and $x^{1/2} = \frac{1}{2}x$. The multiplication in the Bruck loop $(Q,\circ)$ associated with $(Q,\diamond)$ therefore has the form $x\circ y = ((2y)(L_{-x}^\diamond)^{-1}R_x^\diamond)\frac{1}{2} = ((2y)R_x^\diamond (L_{-x}^\diamond)^{-1})\frac{1}{2}$, where the second equality follows by Proposition \prpref{commutes}. Showing $x\circ y = x+y$ is therefore equivalent to proving $(2y)\diamond x = (2y)R_x^\dia = (2x+2y)L_{-x}^\dia = (-x)\dia(2x+2y)$. But $(2y)\dia x = 2y+x-[2y,x]=(-x)+(2x+2y)-[-x,2x+2y] = (-x)\dia (2x+2y)$.

Conversely, suppose that $(Q,\cdot)$ is a uniquely $2$-divisible automorphic loop whose associated Bruck loop $(Q,\circ)$ is an abelian group. By \eqnref{op_corres}, we have $[x,y] = x\circ y\circ (xy)^{-1} = x\circ y \circ ((x)\phi_y\circ y)^{-1} = x\circ y \circ (x\inv)\phi_y \circ y\inv =
x\circ (x\inv)\phi_y$. Since $(x\inv)\phi_x = x\inv$, we have $[x,x] = 1$. Next,
\begin{displaymath}
[x,y]\circ [y,x] = x\circ (x\inv)\phi_y \circ y \circ (y\inv)\phi_x
    = x\circ (y\inv)\phi_x \circ y\circ (x\inv)\phi_y
    = (x)\phi_y\circ y\inv \circ y\circ (x\inv)\phi_y
    = 1,
\end{displaymath}
where we have used \eqnref{flip} in the third equality and $\phi_y\in\Aut(Q)\le\Aut(Q,\circ)$ in the last equality. For biadditivity, we compute
\begin{gather*}
[x\circ y,z] = x\circ y \circ [(x\circ y)\inv]\phi_z = x\circ (x\inv)\phi_z \circ y\circ (y\inv)\phi_z = [x,z]\circ [y,z],\\
[x,y\circ z] = [y\circ z,x]\inv = ([y,x]\circ [z,x])\inv = [y,x]\inv\circ [z,x]\inv = [x,y]\circ [x,z].
\end{gather*}

So far we have shown that $(Q,\circ,[\cdot,\cdot])$ is an alternating, biadditive (nonassociative) ring with underlying abelian group $(Q,\circ)$. In what follows the symbols $+$ and $-$ will refer to sums and differences of endomorphisms of $(Q,\circ)$. Rearranging the definition of $[\cdot,\cdot]$ and using the skew-symmetry, we have $xy = x\circ y\circ [x,y]\inv = y\circ (x)(\id_Q - \ad(y))$. Comparing this with \eqnref{op_corres}, we see that $\id_Q - \ad(x) = \phi_x$ and also $\id_Q + \ad(x) = \phi_{x\inv}$. In particular, property \eqref{Eq:Wright} holds.

Now using biadditivity, we have
\[
[(x)(\id_Q + \ad(z)),(y)(\id_Q + \ad(z))]
= [x,y]\circ [x,[y,z]]\circ [[x,z],y] \circ [[x,z],[y,z]]\,,
\]
and also
\[
[x,y](\id_Q + \ad(z)) = [x,y] \circ [[x,y],z].
\]
Since $\id_Q + \ad(x)=\phi_x \in \Aut(Q)\leq \Aut(Q,[\cdot,\cdot])$, the results of these two calculations are equal. Canceling common terms and rearranging using skew-symmetry, we obtain
\begin{equation}
\eqnlabel{lietmp}
[[x,y],z]\circ [[y,z],x]\circ [[z,x],y] = [[x,z],[y,z]]
\end{equation}
for all $x$, $y$, $z\in Q$. Since the left side of \eqnref{lietmp} is invariant under cyclic permutations of $x$, $y$, $z$, so is the right side, and so we have
\begin{equation}
\eqnlabel{lietmp2}
[[x,z],[y,z]] = [[y,x],[z,x]]
\end{equation}
for all $x$, $y$, $z\in Q$. Replace $x$ in this last identity with $x\circ u$ and use biadditivity to get
\[
[[x,z],[y,z]]\circ [[u,z],[y,z]] = [[y,x],[z,x]]\circ [[y,u],[z,x]] \circ [[y,x],[z,u]]\circ [[y,u],[z,u]].
\]
Canceling terms on both sides using \eqnref{lietmp2}, we obtain
$1 = [[y,u],[z,x]]\circ [[y,x],[z,u]]$ for all $x$, $y$, $z$, $u\in Q$. Taking $u = y$, we get
$1 = [[y,x],[z,y]]$, which is equivalent to \eqref{Eq:WrightAutomorphic}. It follows that the right side of \eqnref{lietmp} is equal to $1$, and so the Jacobi identity holds. Therefore, $(Q,\circ,[\cdot,\cdot])$ is a Lie ring satisfying \eqref{Eq:Wright} and \eqref{Eq:WrightAutomorphic}.

Let us now show that the two constructions are inverse to each other. Suppose that the constructions yield $(Q,\cdot)\mapsto (Q,\circ,[\cdot,\cdot])\mapsto (Q,\dia)$. Then $x\dia y = x\circ y\circ[x,y]^{-1}$, and since $(Q,\circ)$ is an abelian group and $x\circ y\circ (xy)^{-1}=[x,y]$, we conclude that $x\dia y = xy$. In the other direction, let $(Q,+,[\cdot,\cdot])\mapsto (Q,\diamond)\mapsto (Q,\circ,\lceil \cdot,\cdot\rceil)$, where $(Q,\circ)$ is the Bruck loop associated with $(Q,\dia)$. We have already shown that $(Q,\circ)=(Q,+)$. Then $\lceil x,y\rceil = x\circ y\circ (x\dia y)^{-1} = x+y-(x+y-[x,y])=[x,y]$.

Finally, we show the correspondence of substructures. Suppose that $(Q,\cdot)$ corresponds to $(Q,+,[\cdot,\cdot])$. Lemma \ref{Lm:LieWright} shows that the three loop operations of $(Q,\cdot)$ (in fact, of $(Q,\dia)$, but $(Q,\dia)=(Q,\cdot)$ here) can be expressed in terms of $+$ and $[\cdot,\cdot]$.

If $S$ is a subring of $(Q,+,[\cdot,\cdot])$, then since $S$ is closed under $+$ and $[\cdot,\cdot]$, it is a subloop of $(Q,\cdot)$. If $S$ is an ideal of $(Q,+,[\cdot,\cdot])$, then it is invariant under the mappings $\id_Q - \ad(x) = \phi_x$ for all $x\in Q$ and hence $S$ is invariant under $\Inn(Q,\cdot)$ by Lemma \lemref{phigenerate}.

If $S$ is a subloop of $(Q,\cdot)$ closed under square roots, then by Proposition \prpref{bruckloop} $S$ is a subgroup of $(Q,\circ)$. Therefore $S$ is a subring of $(Q,+,[\cdot,\cdot])$ by definition of the bracket.

Finally, if $S$ is a normal subloop of $(Q,\cdot)$, then $S$ is invariant under all mappings $\id_Q - \ad(x) = \phi_x$. But then $(S)\ad(x)\subseteq S$ for all $x\in Q$, and so $S$ is an ideal of $(Q,+,[\cdot,\cdot])$.
\end{proof}

We conclude with the observation that in the uniquely $2$-divisible case the condition \eqref{Eq:WrightAutomorphic} already implies that $(Q,+,[\cdot,\cdot])$ is solvable of derived length at most $2$.

\begin{lemma}\label{Lm:SolvabilityFollows}
Let $(Q,+,[\cdot,\cdot])$ be a uniquely $2$-divisible Lie ring. Then $Q$ satisfies \eqref{Eq:WrightAutomorphic} if and only if $[[Q,Q],[Q,Q]]=0$.
\end{lemma}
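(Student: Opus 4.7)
My plan is to prove the two directions separately. The direction ``$\Leftarrow$'' is immediate: if $[[Q,Q],[Q,Q]]=0$, then in particular $[[Q,x],[Q,x]] \subseteq [[Q,Q],[Q,Q]] = 0$ for every $x \in Q$, which is \eqref{Eq:WrightAutomorphic}.

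For ``$\Rightarrow$'', the main idea is to exploit the fact that the hypothesis admits two genuinely distinct polarizations. First, substituting $x \mapsto x_1 + x_2$ in $[[y,x],[z,x]]=0$ and using bilinearity cancels the two diagonal terms (which vanish by hypothesis), yielding
\[
[[y,x_1],[z,x_2]] + [[y,x_2],[z,x_1]] = 0. \qquad (\ast)
\]
Second, by skew-symmetry of the Lie bracket, the hypothesis $[[y,x],[z,x]]=0$ is equivalent to $[[a,b],[a,c]]=0$ for all $a,b,c\in Q$ (apply bilinearity of $[\cdot,\cdot]$ to the two minus signs from $[y,x]=-[x,y]$ and $[z,x]=-[x,z]$). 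Substituting $a \mapsto a_1+a_2$ and polarizing in the same way gives
\[
[[a_1,b],[a_2,c]] + [[a_2,b],[a_1,c]] = 0. \qquad (\ast\ast)
\]

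After relabeling $a_1=y$, $a_2=z$, $b=x_1$, $c=x_2$, identity $(\ast\ast)$ reads $[[y,x_1],[z,x_2]] + [[z,x_1],[y,x_2]] = 0$. Adding this to $(\ast)$, the last terms cancel via the skew-symmetry identity $[[z,x_1],[y,x_2]] = -[[y,x_2],[z,x_1]]$, leaving
\[
2\,[[y,x_1],[z,x_2]] = 0
\]
for all $y,z,x_1,x_2 \in Q$. Unique $2$-divisibility of $(Q,+)$ then forces $[[y,x_1],[z,x_2]]=0$, and bilinearity extends this to $[[Q,Q],[Q,Q]]=0$. The only real obstacle is spotting the two independent polarizations; remarkably, the Jacobi identity plays no role, so the proof rests solely on bilinearity, skew-symmetry, and unique $2$-divisibility, which explains the necessity of the divisibility hypothesis.
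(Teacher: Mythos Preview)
Your proof is correct. Both your argument and the paper's rest on the same three ingredients---polarization of the hypothesis, skew-symmetry of the bracket, and unique $2$-divisibility---and both correctly avoid the Jacobi identity. The difference is organizational: the paper performs a single polarization (your identity $(\ast)$) and then iterates it through several skew-symmetry rewritings, eventually producing two equalities $[[x,y],[z,u]] = [[x,z],[u,y]]$ and $[[x,y],[z,u]] = -[[x,z],[u,y]]$ whose comparison yields $2[[x,y],[z,u]]=0$. You instead observe at the outset that skew-symmetry makes the hypothesis symmetric in the ``inner'' and ``outer'' repeated variable, so polarizing in each gives two independent linear relations $(\ast)$ and $(\ast\ast)$ that combine directly. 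Your route is shorter and makes the role of the two available polarizations transparent; the paper's route has the minor advantage of tracking a single identity throughout, but at the cost of several extra rewriting steps.
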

\begin{proof}
Clearly, if $[[Q,Q],[Q,Q]]=0$ then \eqref{Eq:WrightAutomorphic} follows. For the converse, suppose that $[[x,y],[z,y]]=0$ for all $x$, $y$, $z\in Q$. Replacing $y$ with $y+u$ and then using \eqref{Eq:WrightAutomorphic} itself to cancel terms, we obtain $[[x,y],[z,u]]+[[x,u],[z,y]] = 0$ or by skew-symmetry,
\begin{equation}
\eqnlabel{lietmp4}
[[x,y],[z,u]] = [[x,u],[y,z]]
\end{equation}
for all $x$, $y$, $z$, $u\in Q$. Now if we apply the identity \eqnref{lietmp4} to its own right hand side, we obtain $[[x,u],[y,z]] = [[x,z],[u,y]]$, which together with \eqnref{lietmp4} gives
\begin{equation}
\eqnlabel{lietmp5}
[[x,y],[z,u]] = [[x,z],[u,y]]
\end{equation}
for all $x$, $y$, $z$, $u\in Q$. On the other hand, \eqnref{lietmp4} is equivalent to
\begin{equation}
\eqnlabel{lietmp6}
[[x,y],[z,u]] = [[u,x],[z,y]]
\end{equation}
for all $x$, $y$, $z$, $u\in Q$, using skew-symmetry. If we apply \eqnref{lietmp6} to its own right hand side, we obtain $[[u,x],[z,y]] = [[y,u],[z,x]] = - [[x,z],[u,y]]$, using skew-symmetry in the last equality. This together with \eqnref{lietmp6} gives
\begin{equation}
\eqnlabel{lietmp7}
[[x,y],[z,u]] = -[[x,z],[u,y]]
\end{equation}
for all $x$, $y$, $z$, $u\in Q$. Comparing \eqnref{lietmp5} and \eqnref{lietmp7}, we have
\[
2[[x,y],[z,u]] = 0
\]
for all $x$, $y$, $z$, $u\in Q$. Since $(Q,+)$ is uniquely $2$-divisible, it follows that $[[x,y],[z,u]] = 0$ for all $x$, $y$, $z$, $u\in Q$.
\end{proof}

\section{Nilpotency and Solvability}
\seclabel{solvability}

In this section we prove the Odd Order Theorem for automorphic loops together with two other corollaries of Theorem \thmref{correspondence}. We start with automorphic loops of prime power order.

Let $p$ be a prime. By Corollary \corref{orderpisgroup}, an automorphic loop of order $p$ is isomorphic to $\mathbb Z_p$. The following result was first obtained by Cs\"org\H{o} \cite{Csorgo1}, using her signature method of connected transversals. We can now give a short proof based on Theorem \thmref{correspondence}. A proof that is both short and elementary remains elusive.

\begin{theorem}(Cs\"org\H{o})
\thmlabel{orderp2}
Let $p$ be a prime. Every automorphic loop of order $p^2$ is a group.
\end{theorem}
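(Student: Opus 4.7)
The plan is to reduce to the Lie ring setting of Theorem~\thmref{correspondence} and then rule out nontrivial Lie brackets by exploiting condition~\eqref{Eq:Wright}. The case $p=2$ is trivial since every loop of order at most $4$ is a group, so assume $p$ is odd and let $Q$ be an automorphic loop of order $p^2$. By Corollary~\corref{odd-2div}, $Q$ is uniquely $2$-divisible, so the associated Bruck loop $(Q,\circ)$ is defined and has odd order $p^2$.

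The first and most delicate step is to show that $(Q,\circ)$ is an abelian group. By Theorem~\thmref{cauchy} there is an element of order $p$ in $Q$. If $Q$ also contains an element of order $p^2$, then the cyclic subloop it generates equals $Q$ by Theorem~\thmref{lagrange}, so $Q$ is already a cyclic group and we are done. Otherwise $(Q,\circ)$ is a Bruck loop of order $p^2$ and exponent $p$, and a short argument -- either directly using Glauberman's Lagrange, Cauchy and Sylow theorems for Bruck loops of odd order together with the fact that Bol loops of prime order are cyclic, or by appealing to the classification of Bol loops of order $p^2$ -- shows that $(Q,\circ)$ must be an abelian group.

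Once this is in hand, Theorem~\thmref{correspondence} endows $Q$ with a Lie ring structure $(Q,+,[\cdot,\cdot])$ satisfying \eqref{Eq:Wright} and \eqref{Eq:WrightAutomorphic}, with $(Q,+)=(Q,\circ)$ and $(Q,\dia)=(Q,\cdot)$. The additive group has order $p^2$, so it is either $\mathbb Z_{p^2}$ or $\mathbb Z_p\oplus\mathbb Z_p$. If $(Q,+)$ is cyclic, generated by $g$, then biadditivity and skew-symmetry give $[ng,mg]=nm[g,g]=0$, so the bracket vanishes identically and $(Q,\cdot)=(Q,\dia)=(Q,+)$ is an abelian group.

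If instead $(Q,+)\cong\mathbb Z_p\oplus\mathbb Z_p$, fix a basis $e_1,e_2$ and write $[e_1,e_2]=ae_1+be_2$ with $a,b\in\mathbb Z_p$. For $x=\alpha e_1+\beta e_2$ a short computation gives $e_1\ad(x)=\beta(ae_1+be_2)$ and $e_2\ad(x)=-\alpha(ae_1+be_2)$, so $\ad(x)$ has rank at most $1$ with eigenvalues $0$ and $\beta a-\alpha b$. Condition~\eqref{Eq:Wright} forces neither $1$ nor $-1$ to be an eigenvalue of $\ad(x)$ for any $x$, so the $\mathbb Z_p$-linear form $(\alpha,\beta)\mapsto \beta a-\alpha b$ on $\mathbb Z_p^2$ must avoid $\pm 1$ entirely. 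When $(a,b)\ne(0,0)$ this form is surjective and in particular attains $1$, a contradiction. Hence $[e_1,e_2]=0$, the bracket vanishes, and $(Q,\cdot)$ is an abelian group. The main obstacle in the whole argument is the Bruck-loop step; the Lie ring calculation itself is just elementary linear algebra over $\mathbb Z_p$.
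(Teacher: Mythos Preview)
Your proof is correct and follows the same route as the paper: dispose of $p=2$, show the associated Bruck loop $(Q,\circ)$ is an abelian group (the paper simply cites Burn \cite{Burn} for Bruck loops of order $p^2$), pass to the Lie ring via Theorem~\thmref{correspondence}, and then rule out a nontrivial bracket using \eqref{Eq:Wright}. The only cosmetic difference is in the last step: the paper invokes the classification of two-dimensional Lie algebras (the nonabelian one has $[x,y]=y$, so $y(\id+\ad(x))=0$ violates \eqref{Eq:Wright}), whereas you reach the same conclusion by computing the eigenvalues of $\ad(x)$ directly.
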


\begin{proof}
Let $Q$ be an automorphic loop of order $p^2$. Every loop of order $4$ is associative \cite{Pflugfelder}, so assume $p>2$. Bruck loops of order $p^2$ are groups \cite{Burn}. If $(Q,\circ)$ is cyclic, then so is $Q$, so assume $(Q,\circ)$ is elementary abelian. Theorem \thmref{correspondence} and Lemma \ref{Lm:SolvabilityFollows} give an associated solvable Lie ring $(Q,\circ,[\cdot,\cdot])$ of derived length at most $2$. Since $(Q,\circ)$ is an elementary abelian, $(Q,\circ,[\cdot,\cdot])$ is a $2$-dimensional Lie algebra over $GF(p)$. Over any field, there are, up to isomorphism, only two $2$-dimensional Lie algebras, one abelian and the other nonabelian \cite{Humph}. The nonabelian Lie algebra of dimension $2$ has a basis $\{x,y\}$ such that $[x,y] = y$. But then $y(\id + \ad(x)) = 0$ so that condition \eqref{Eq:Wright} is not satisfied. Thus $(Q,\circ,[\cdot,\cdot])$ must be an abelian Lie algebra, that is, $[x,y] = 0$ for all $x$, $y\in Q$. Then $xy = x\circ y$, that is, $Q$ is an abelian group.
\end{proof}

Commutative automorphic loops of order $p^k$ are centrally nilpotent when $p$ is an odd prime \cite{Csorgo1,JKV3}. Commutative automorphic loops of order $p^3$ were classified up to isomorphism in \cite{dBGV}. There are additional nonassociative noncommutative automorphic loops of order $p^3$, $p$ and odd prime. A class of such loops with trivial nucleus was obtained in \cite{JKV3}. In particular, when $p$ is an odd prime, automorphic loops of order $p^3$ need not be centrally nilpotent. Here we present the construction of \cite{JKV3} in a new way, using the corresponding Lie algebras:

\begin{example}
\exmlabel{orderp3}
Let $F$ be a field and fix $A\in GL(2,F)$. On $Q = F\times F^2$, define an operation $[\cdot,\cdot]$ by
\[
[(a,x),(b,y)] = (0,(ay-bx)A)
\]
for all $a$, $b\in F$, $x$, $y\in F^2$. (Note that we think of elements of $F^2$ as row vectors so that $A$ acts on the right.) Then it is straightforward to verify that $(Q,+,[\cdot,\cdot])$ is a Lie algebra satisfying \eqref{Eq:WrightAutomorphic}. Let $r_x = \mathrm{id}_Q - \mathrm{ad}(x)$, $\ell_x = \mathrm{id}_Q+\mathrm{ad}(x)$ be as before. In block matrix form, we have
\begin{align*}
(a,x)r_{(b,y)} &= (a,x+(bx-ay)A) = (a,x)\begin{pmatrix} 1 & -yA \\ 0 & I + bA \end{pmatrix} \\
\intertext{and}
(b,y)\ell_{(a,x)} &= (b,y+(bx-ay)A) = (b,y)\begin{pmatrix} 1 & xA \\ 0 & I - aA \end{pmatrix}\,,
\end{align*}
where $I$ is the $2\times 2$ identity matrix. Thus condition \eqref{Eq:Wright} will hold precisely when $\det(I + \mu A)\neq 0$ for all $\mu\in F$, that is, when the characteristic polynomial of $A$ has no roots in $F$. (See \cite{JKV3} for an interpretation of this in terms of anisotropic planes.) Assume this property now holds for $A$.

We show that the left/right nucleus of the corresponding loop $(Q,\dia)$ is trivial. By Proposition \ref{Pr:LieToAut}, $N_{\lambda}(Q,\dia)$ consists of all elements $(a,x)$ such that $[[(a,x),(b,y)],(c,z)] = (0,c(ay-bx)A) = (0,0)$ for all $b$, $c\in F$, $y$, $z\in F^2$. Thus $c(ay-bx)A = 0$. Since $A\in GL(2,F)$ and taking $c\neq 0$, we have $ay=bx$ for all $b\in F$, $y\in F^2$. Taking $b\neq 0$, $y=0$ implies $x = 0$, while taking $b=0$, $y\neq 0$ implies $a=0$. Thus $N_{\lambda}(Q,\dia)$ is trivial.

Consider the particular case $F = GF(p)$. If $p = 2$, then by Corollary \corref{glie}, we obtain a commutative automorphic loop $(Q,\dia)$ of exponent $2$ and order $8$. There is precisely one such loop with trivial center, first constructed in \cite{JKV2}. As discussed in \cite{JKV3}, if $p = 3$, then this construction gives two isomorphism classes of (noncommutative) automorphic loops depending on the choice of $A$, while if $p = 5$, there are three isomorphism classes. For $p > 5$, it is conjectured that there are precisely three isomorphism classes \cite[Conj. 6.5]{JKV3}.
\end{example}

Returning to general automorphic loops of order $p^3$, $p$ odd prime, there is much that is still unknown, but we can at least say that for $p = 3$, such automorphic loops are necessarily given by the construction of Proposition \ref{Pr:LieToAut}:

\begin{lemma}
\lemlabel{order27}
Let $Q$ be an automorphic loop of order $27$ and exponent $3$. Then $Q$ is constructed from a Lie algebra satisfying \eqref{Eq:Wright} and \eqref{Eq:WrightAutomorphic} by the construction $(\dia)$.
\end{lemma}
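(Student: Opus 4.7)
The plan is to verify the hypotheses of Theorem \thmref{correspondence} and apply it directly. What is required is showing that the Bruck loop $(Q,\circ)$ associated with $Q$ is an abelian group; once this is established, Theorem \thmref{correspondence} will produce a uniquely $2$-divisible Lie ring $(Q,\circ,[\cdot,\cdot])$ satisfying \eqref{Eq:Wright} and \eqref{Eq:WrightAutomorphic} with $Q=(Q,\diamond)$ via $(\diamond)$, and because $(Q,\circ)\cong\mathbb{Z}_3^3$ will be an $\mathbb{F}_3$-vector space, this ring will in fact be a Lie algebra over $\mathbb{F}_3$.

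The setup is quick. Since $|Q|=27$ is odd, Corollary \corref{odd-2div} implies $Q$ is uniquely $2$-divisible, so $(Q,\circ)$ is well-defined. By Proposition \prpref{bruckloop}, powers in $(Q,\circ)$ coincide with powers in $Q$; in particular, $(Q,\circ)$ has order $27$ and exponent $3$, and every subloop of $Q$ (being closed under square roots, since $x^{1/2}=x^2$ when $x^3=1$) is also a subloop of $(Q,\circ)$.

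The central step is showing that any Bruck loop of order $27$ and exponent $3$ is an elementary abelian group. The route I would take: identify $(Q,\circ)$ with a commutative Moufang loop of exponent $3$ (the left Bol identity together with AIP and $x^3=1$ collapses to the Moufang identities and then forces commutativity, since Moufang plus AIP yields $x^{-1}y^{-1}=y^{-1}x^{-1}$), and then invoke Bruck's classical theorem that the smallest nonassociative commutative Moufang loop has order $81$. Consequently every CML of order $27$ is an abelian group, and being of exponent $3$ it must be elementary abelian. As a backup, one can bypass the CML identification and argue directly using Glauberman's structure theory \cite{Gl1,Gl2}: $(Q,\circ)$ is a solvable, nilpotent Bruck $3$-loop, and at order $27$ with exponent $3$ the associator and commutator subloops are forced to be trivial via an analysis of the center, using also that $\Aut(Q)\le\Aut(Q,\circ)$ by Lemma \lemref{bruck-aut} supplies abundant automorphisms inherited from the automorphic structure of $Q$.

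Once $(Q,\circ)$ is established as elementary abelian, the proof concludes by invoking Theorem \thmref{correspondence}. The main obstacle is the middle step: in principle commutative Moufang loops of exponent $3$ exhibit nontrivial nilpotency class in larger orders, so the argument is critically dependent on the size constraint $|Q|=27$ (and on the identification of Bruck loops of exponent $3$ with CMLs of exponent $3$). The remaining pieces, verifying $2$-divisibility and applying the correspondence, are routine uses of the machinery developed in Sections \secref{cores}, \secref{bruck}, and \secref{correspondence}.
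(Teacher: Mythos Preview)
Your proposal is correct and follows essentially the same route as the paper: identify the associated Bruck loop of exponent $3$ with a commutative Moufang loop (the paper cites Robinson \cite{Rob} for this), use that (commutative) Moufang loops of order $27$ are associative so that $(Q,\circ)$ is elementary abelian, and then apply Theorem \thmref{correspondence}. The paper's proof is terser and omits your setup and backup arguments, but the key ideas are identical.
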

\begin{proof}
Every Bruck loop of exponent $3$ is a commutative Moufang loop \cite{Rob}. Moufang loops of order $3^n$ for $n\leq 3$ are associative. Thus the associated Bruck loop $(Q,\circ)$ is an elementary abelian $3$-group. By Theorem \thmref{correspondence}, we have an associated solvable Lie ring $(Q,\circ,[\cdot,\cdot])$ satisfying \eqref{Eq:Wright}, \eqref{Eq:WrightAutomorphic}. Since $(Q,\circ)$ is elementary abelian, $(Q,\circ,[\cdot,\cdot])$ is a Lie algebra over $GF(3)$. By Theorem \thmref{correspondence}, $(Q,\cdot)$ is equal to the loop $(Q,\dia)$ obtained from $(Q,\circ,[\cdot,\cdot])$ by $(\dia)$.
\end{proof}

Lemma \lemref{order27} cannot be easily extended to Bruck loops of order $p^3$ and exponent $p$ for $p > 3$ because there are nonassociative Bruck loops of such orders.

We now start working toward the Odd Order Theorem.

If $Q$ is a loop and $S\leq Q$, the \emph{relative multiplication group of} $S$, denoted by $\Mlt(Q;S)$, is the subgroup of $\Mlt(Q)$ generated by all $R_x$, $L_x$, $x\in S$. The \emph{relative inner mapping group of} $S$ is $\Inn(Q;S) = (\Mlt(Q;S))_1 = \Mlt(Q;S)\cap \Inn(Q)$.

\begin{lemma}
\lemlabel{odd-subloop}
Let $Q$ be a finite automorphic loop of odd order. A subloop $S$ of the associated Bruck loop $(Q,\circ)$ is a subloop of $Q$ if and only if $S h = S$ for every $h\in \Inn(Q;S)$.
\end{lemma}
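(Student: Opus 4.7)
The forward direction is immediate: if $S\leq Q$, every generator $R_x, L_x$ (with $x\in S$) of $\Mlt(Q;S)$ preserves $S$ by closure of $S$ under $\cdot,\ldiv,\rdiv$, so all of $\Mlt(Q;S)$ does, and in particular so does $\Inn(Q;S)\subseteq \Mlt(Q;S)$.

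For the converse, the plan is to show $L_w(S)\subseteq S$ for every $w\in S$ and then to derive $R_w(S)\subseteq S$, which together with $1\in S$ will force $S\leq Q$. Two preliminary observations, neither of which uses the hypothesis on $\Inn(Q;S)$, will be needed. First, since $S$ is a subloop of the Bruck loop $(Q,\circ)$ of odd order and $\cdot$-powers coincide with $\circ$-powers by Proposition~\prpref{bruckloop}, $S$ is closed under $\cdot$-inverses and $\cdot$-square roots; in particular $x^{-2}\in S$ whenever $x\in S$. Second, from the defining identity $(x\circ y)^2 = (y^2)P_x$ and the fact that $(x\circ y)^2\in S$ for all $x,y\in S$, the map $P_x$ preserves $S$ for every $x\in S$ and, by finiteness, restricts to a bijection on $S$.

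The crucial step is to exhibit, for each $x\in S$, an element of $\Inn(Q;S)$ that encodes $L_{x^{-2}}$. I plan to use $h_x := P_x L_{x^{-2}}$. Since $x$, $x^{-1}$, $x^{-2}$ all lie in $S$, we have $h_x\in \Mlt(Q;S)$; and using power-associativity of $\langle x\rangle$, a short calculation gives $P_x(x^{-2}) = (x^{-2}\cdot x)L_{x^{-1}}^{-1} = x^{-1}\ldiv x^{-1} = 1$, so $h_x(1)=1$ and hence $h_x\in \Inn(Q;S)$. Applying the hypothesis yields $P_x(x^{-2}\cdot y)\in S$ for every $y\in S$, and since $P_x$ is a bijection on $S$ this forces $x^{-2}\cdot y\in S$. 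As $x$ ranges over $S$, the element $x^{-2}$ ranges over all of $S$, so $L_w(S)\subseteq S$ for every $w\in S$. The identity $R_x = P_x L_{x^{-1}}$ combined with $P_x(S)=S$ and $L_{x^{-1}}(S)\subseteq S$ then yields $R_x(S)\subseteq S$; finiteness upgrades all inclusions to equalities, so $\Mlt(Q;S)$ preserves $S$ and $S$ is closed under $\cdot,\ldiv,\rdiv$.

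The main obstacle is isolating this right element of $\Inn(Q;S)$. The permutation $P_x$ itself automatically preserves $S$ thanks to the Bruck loop structure, but fails to be an inner mapping because $P_x(1)=x^2\ne 1$. The key observation is that $P_x^{-1}(1) = x^{-2}$ happens to lie in $S$, which is precisely what allows $P_x L_{x^{-2}}$ to both fix $1$ and remain inside $\Mlt(Q;S)$, so that the hypothesis can be applied to recover $L_{x^{-2}}$-invariance of $S$.
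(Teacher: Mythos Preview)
Your argument is correct. The key identity $R_x = P_x L_{x^{-1}}$ does hold in automorphic loops (in either composition convention, since $L_{x^{-1}}$ commutes with $R_x$ by \eqnref{RLLR}), and your construction of $h_x$ as the composite of $L_{x^{-2}}$ followed by $P_x$ gives a genuine element of $\Inn(Q;S)$ whose invariance, combined with $P_x(S)=S$, yields $L_{x^{-2}}(S)\subseteq S$. The remaining steps are routine.

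Your route is genuinely different from the paper's. The paper does not isolate $P_x$ as a map preserving $S$; instead it uses the inner mapping $T_u$ together with Lemma~\lemref{bruck-aut} (that $\Aut(Q)\le\Aut(Q,\circ)$) to compute $(u\circ v^{1/2})^2 T_u = v R_u^2 L_u^{-1} L_{u^{-1}}^{-1}$, then applies the hypothesis to both $T_u$ and $L_{u^{-1}}L_u$ to obtain $vR_u^2\in S$, inducts to get $vR_u^{2k}\in S$, and finally exploits the odd order of $u$ via $R_u^{2n+1}\in\Inn(Q;S)$ to extract $vu\in S$ and $v\rdiv u\in S$. Your approach avoids the induction and the second application of the hypothesis by recognizing up front that the Bruck-loop identity $(x\circ y)^2=(y^2)P_x$ already forces $P_x(S)=S$; this lets you peel off $L_{x^{-2}}$ from a single inner mapping. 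Both arguments ultimately lean on odd order (yours through the bijectivity of $x\mapsto x^{-2}$ on $S$, the paper's through $|u|$ being odd), but your packaging is more economical. One minor point: your composition convention is opposite to the paper's right-action convention, so if this is to be inserted you should write $h_x = L_{x^{-2}}P_x$ to match the ambient notation.
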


\begin{proof}
The ``only if'' direction is trivial, so assume the hypothesis of the converse assertion. Fix $u$, $v \in S$. Since powers agree in $(Q,\circ)$ and $Q$, we have $u\inv, v\inv\in S$. Set $w = v^{1/2}$ and note that $v\in S$ as well. By Lemma \lemref{bruck-aut}, $\Aut(Q,\cdot)\le\Aut(Q,\circ)$. Thus $S$ also contains
\[
(u \circ w)^2 T_u = (uT_u \circ wT_u)^2 = (u \circ wT_u)^2
= (u\inv \ldiv [w T_u]^2) u = v T_u L_{u\inv}^{-1} R_u = v R_u^2 L_u\inv L_{u\inv}\inv ,
\]
using \eqnref{RLLR}. Since $L_{u\inv} L_u \in \Inn(Q)$, $S$ also contains $v R_u^2 = (u\circ w)^2 T_u L_{u\inv} L_u$. By induction, $v R_u^{2k}\in S$ for all integers $k$. Now let $2n+1$ be the order of $u$. Then $R_u^{2n+1} \in \Inn(Q)$, and so $S$ contains $v R_u^{2n+1} R_u^{-2n} = v u$, and also $v R_u^{2n+1} R_u^{2(-n-1)} = v \rdiv u$. Thus $S$ is closed under multiplication and right division. By the AAIP, $S$ is also closed under left division, and hence is a subloop.
\end{proof}

\begin{lemma}
\lemlabel{Bruck_characteristic}
Let $Q$ be a uniquely $2$-divisible automorphic loop, and let $(Q,\circ)$ be the associated Bruck loop. Then every characteristic subloop of $(Q,\circ)$ is a normal subloop of $Q$.
\end{lemma}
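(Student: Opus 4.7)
The plan is to verify the two requirements for $S$ to be a normal subloop of $Q$: that $S$ is a subloop of $(Q,\cdot)$, and that $\Inn(Q)$ stabilizes $S$ setwise. The second requirement is essentially automatic from the hypothesis. By Lemma~\lemref{bruck-aut} we have $\Inn(Q) \leq \Aut(Q) \leq \Aut(Q,\circ)$, so characteristicity of $S$ in $(Q,\circ)$ immediately gives $(S)\varphi = S$ for every inner mapping $\varphi$ of $Q$.

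For the substantive step, showing $S$ is closed under $\cdot$, $\ldiv$, $\rdiv$, my approach adapts the strategy used in the proof of Lemma~\lemref{odd-subloop}. For $u, v \in S$ and $w = v^{1/2}$, one computes $(u \circ w)^2 T_u$ in two ways: as an $\circ$-product of elements of $S$ acted on by the inner mapping $T_u$ (manifestly in $S$), and as $v R_u^2 L_u\inv L_{u\inv}\inv$. Since $L_u L_{u\inv} \in \Inn(Q)$ also stabilizes $S$, this forces $vu^2 \in S$, and iterating yields $vu^{2k} \in S$ for every $k \in \mathbb{Z}$.

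The proof faces two obstacles. First, one needs $w = v^{1/2} \in S$: in the finite odd-order setting of Lemma~\lemref{odd-subloop} this is automatic because $v^{1/2}$ is then a positive integer power of $v$, but here I would argue it from characteristicity, using that the halving map on $Q$ commutes with every $\varphi \in \Aut(Q,\circ)$ (since $\varphi(x^{1/2})^2 = \varphi(x)$ forces $\varphi(x^{1/2}) = \varphi(x)^{1/2}$), so that the set $\{v^{1/2} : v \in S\}$ is again $\Aut(Q,\circ)$-invariant and should be forced to lie inside $S$. Second, one must bridge from $vu^{2k} \in S$ to $vu \in S$; Lemma~\lemref{odd-subloop} accomplishes this via $R_u^{|u|} \in \Inn(Q)$ with $|u|$ odd, a move unavailable without finite order, so my plan is to use unique $2$-divisibility together with the AAIP to factor $R_u$ through maps already known to preserve $S$. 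This bridge is where I expect the hardest work to lie.
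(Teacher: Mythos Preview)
Your approach matches the paper's exactly: first use Lemma~\lemref{bruck-aut} together with characteristicity of $S$ to obtain $\Inn(Q)$-invariance, then invoke Lemma~\lemref{odd-subloop} to conclude that $S$ is a subloop of $(Q,\cdot)$. The paper's proof consists of precisely those two sentences.

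You are right, however, to flag the scope issue. As stated, Lemma~\lemref{Bruck_characteristic} is for an arbitrary uniquely $2$-divisible automorphic loop, while Lemma~\lemref{odd-subloop} is proved only for finite automorphic loops of odd order (its proof explicitly uses that each $u$ has finite odd order $2n+1$). So the paper's own argument literally covers only the finite odd-order case---which is all that is ever used, since the lemma is applied only inside the proof of Theorem~\thmref{ft}. Your attempt to push the argument to the general uniquely $2$-divisible setting does not go through as written. For the first obstacle, the observation that $S^{1/2} = \{v^{1/2} : v \in S\}$ is $\Aut(Q,\circ)$-invariant does not by itself force $S^{1/2} \subseteq S$: invariance is a symmetry statement, not a containment statement, and $S^{1/2}$ need not even be a subloop of $(Q,\circ)$. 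For the second obstacle you correctly note that the odd-order trick $R_u^{2n+1} \in \Inn(Q)$ is unavailable, and you do not supply a replacement. In the finite odd-order case both obstacles simply dissolve (square roots are positive integer powers, and $|u|$ is odd), which is exactly the regime the paper relies on.
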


\begin{proof}
If $S$ is a characteristic subloop of $(Q,\circ)$, then by Lemma \lemref{bruck-aut}, $S$ is invariant under $\Inn(Q)$. By Lemma \lemref{odd-subloop}, $S$ is subloop of $Q$.
\end{proof}

\begin{theorem}[Odd Order Theorem]
\thmlabel{ft}
Every automorphic loop of odd order is solvable.
\end{theorem}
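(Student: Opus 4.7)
The plan is to induct on $|Q|$. The base case $|Q|=1$ is trivial. For the inductive step the goal is to exhibit a proper nontrivial normal subloop $N \normal Q$; then $N$ and $Q/N$ are automorphic loops of odd order (subloops and factor loops of automorphic loops are automorphic, and $|N|$ divides $|Q|$ by Theorem \thmref{lagrange}), both of order strictly less than $|Q|$, so the inductive hypothesis makes both solvable, which forces $Q$ itself to be solvable.

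To locate $N$, I would pass to the associated Bruck loop $(Q,\circ)$ from Section \secref{bruck}, which has odd order by Corollary \corref{odd-2div} and is therefore solvable by Glauberman's theorem on Bruck loops of odd order. If $(Q,\circ)$ is not an abelian group, then its derived subloop $(Q,\circ)'$ is a proper subloop (by solvability and nontriviality of $(Q,\circ)$) and is nontrivial (since $(Q,\circ)$ is not an abelian group). Derived subloops are characteristic, so Lemma \lemref{Bruck_characteristic} makes $(Q,\circ)'$ a normal subloop of $Q$, and I would take $N = (Q,\circ)'$.

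The remaining case is when $(Q,\circ)$ is an abelian group. Here Theorem \thmref{correspondence} produces a uniquely $2$-divisible Lie ring $(Q,\circ,[\cdot,\cdot])$ satisfying \eqref{Eq:Wright} and \eqref{Eq:WrightAutomorphic} whose $(\dia)$-loop equals $(Q,\cdot)$, and Lemma \ref{Lm:SolvabilityFollows} gives $[[Q,Q],[Q,Q]]=0$. If $[Q,Q]=0$ then $(Q,\cdot)=(Q,\circ)$ is itself an abelian group and solvability is immediate. Otherwise $[Q,Q]$ is nontrivial and proper (else $Q=[Q,Q]=[[Q,Q],[Q,Q]]=0$, a contradiction), and Theorem \thmref{correspondence} turns this ideal into the required proper nontrivial normal subloop $N$ of $Q$.

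The main obstacle I anticipate is the second case: characteristic subloops of $(Q,\circ)$ yield no information once $(Q,\circ)$ is already an abelian group. Overcoming it relies on the Lie ring correspondence of Section \secref{correspondence} together with Lemma \ref{Lm:SolvabilityFollows}, which together reduce the problem to a Lie ring of derived length at most $2$ and let one extract the derived subring as a proper nontrivial ideal corresponding to a normal subloop of $Q$.
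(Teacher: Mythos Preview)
Your proof is correct and follows essentially the same route as the paper: reduce via Glauberman's solvability of odd-order Bruck loops and Lemma \lemref{Bruck_characteristic} to the case where $(Q,\circ)$ is abelian, then invoke the Lie ring correspondence (Theorem \thmref{correspondence}) together with Lemma \ref{Lm:SolvabilityFollows}. Your handling of the abelian case is in fact slightly more direct than the paper's---the paper first passes through the characteristic subloop $M_p=\{x:x^p=1\}$ to force $(Q,\circ)$ to be elementary abelian (hence a Lie \emph{algebra} over $GF(p)$) before invoking the simple-or-abelian dichotomy, whereas you extract the derived ideal $[Q,Q]$ immediately from the Lie ring, which already suffices.
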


\begin{proof}
Let $Q$ be a minimal counterexample. If $1 < S\propnormal Q$, then by minimality, both $S$ and $Q/S$ are solvable automorphic loops of odd order. This contradicts the nonsolvability of $Q$. Therefore $Q$ is simple.

Let $(Q,\circ)$ be the associated Bruck loop and let $D$ denote the derived subloop of $(Q,\circ)$. By \cite[Thm. 14(b)]{Gl2}, $(Q,\circ)$ is solvable and so $D$ is a proper subloop. Since $D\Char(Q,\circ)$, it follows from Lemma \lemref{Bruck_characteristic} that $D\unlhd Q$. Since $Q$ is simple, $D = \{1\}$. Therefore $(Q,\circ)$ is an abelian group.

Now let $p$ be a prime divisor of $|Q|$ and let $M_p=\{x\in Q\ |\ x^p=1\}$. Then $M_p\Char(Q,\circ)$, and so by Lemma \lemref{Bruck_characteristic} again, $M_p\unlhd Q$. By Theorem \thmref{cauchy}, $M_p$ is nontrivial, and so since $Q$ is simple, $M_p = Q$. Thus $Q$ has exponent $p$, $(Q,\circ)$ has exponent $p$ by Proposition \prpref{bruckloop}, and $(Q,\circ)$ is an elementary abelian $p$-group.

By Theorem \thmref{correspondence}, $(Q,\circ,[\cdot,\cdot])$ defined by ($[\cdot,\cdot]$) is a Lie ring satisfying \eqref{Eq:Wright} and \eqref{Eq:WrightAutomorphic}. By Lemma \ref{Lm:SolvabilityFollows}, $(Q,\circ,[\cdot,\cdot])$ is solvable. Since $(Q,\circ)$ is an elementary abelian $p$-group, we may view $(Q,\circ,[\cdot,\cdot])$ as a finite dimensional Lie algebra over $GF(p)$. Since $Q$ is simple as a loop, Theorem \thmref{correspondence} also implies that $(Q,\circ,[\cdot,\cdot])$ is either simple as a Lie algebra or else is abelian. The former case contradicts the solvability of $(Q,\circ,[\cdot,\cdot])$, and so $(Q,\circ,[\cdot,\cdot])$ is abelian. But then $xy = x\circ y\circ [x,y] = x\circ y$, so that $Q$ is an abelian group, a contradiction with nonsolvability of $Q$.
\end{proof}

We remark that the proof of \cite[Thm. 14(b)]{Gl2} depends on the Feit-Thompson Odd Order Theorem for groups, and hence so does our proof of Theorem \thmref{ft}.

\section{Finite Simple Automorphic Loops}
\seclabel{simplicity}

The main open problem in the theory of automorphic loops is the existence or nonexistence of a nonassociative finite simple automorphic loop, \emph{cf.}, Problem \prbref{simple}. By Theorem \thmref{ft} and by the main results of \cite{GKN}, such a loop would be noncommutative and of even order, though not a $2$-loop.

Simple loops can be studied via primitive permutation groups thanks to this classic theorem of Albert \cite{albert}:

\begin{proposition}
\prplabel{albert}
A loop $Q$ is simple if and only if $\Mlt(Q)$ is primitive.
\end{proposition}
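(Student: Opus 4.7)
The plan is to exhibit the correspondence between normal subloops of $Q$ and blocks of $\Mlt(Q)$ containing the identity element $1$. Transitivity of $\Mlt(Q)$ on $Q$ is automatic, since $L_x$ sends $1$ to $x$, so primitivity of $\Mlt(Q)$ reduces to the statement that the only blocks containing $1$ are $\{1\}$ and $Q$. Once I establish the bijection, the equivalence in the statement follows at once: nontrivial normal subloops correspond to nontrivial blocks through $1$, so simplicity of $Q$ is equivalent to the absence of such blocks, which is equivalent to primitivity.

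First I would verify that a normal subloop $S \normal Q$ is always a block of $\Mlt(Q)$. Indeed, the partition of $Q$ into cosets of $S$ is a congruence on $Q$, and every $L_x$ and $R_x$ (hence every element of $\Mlt(Q)$) descends to the quotient loop $Q/S$ and therefore respects this partition. In particular, for any $g \in \Mlt(Q)$, the set $Sg$ is again a coset of $S$, which is either equal to $S$ or disjoint from $S$; this is exactly the block condition.

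Conversely, suppose $B$ is a block of $\Mlt(Q)$ containing $1$. For any $a \in B$, $L_a(1) = a \in B$, so $L_a(B) \cap B \neq \emptyset$, and the defining property of a block forces $L_a(B) = B$; thus $aB = B$. The analogous computation with $R_a$ gives $Ba = B$, so $B$ is closed under multiplication. Since $L_a$ is a bijection and $L_a(B) = B$, we also get $L_a^{-1}(B) = B$, whence $a \ldiv c = L_a^{-1}(c) \in B$ for every $c \in B$; the same for right division via $R_a$. Therefore $B$ is a subloop. Finally, every $h \in \Inn(Q) = \Mlt(Q)_1$ fixes $1 \in B$, so $h(B) \cap B \ni 1$, and the block property yields $h(B) = B$. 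Hence $B$ is invariant under all inner mappings, i.e., $B \normal Q$.

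The two assignments $S \mapsto S$ and $B \mapsto B$ are mutually inverse, giving the desired bijection and completing the argument. There is no serious obstacle here (this is Albert's classical 1943 theorem); the only point requiring care is verifying that a block through $1$ is closed under all the quasigroup operations, and this follows immediately from combining the block condition with the fact that the translations are bijections of $Q$.
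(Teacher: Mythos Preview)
The paper does not supply a proof of this proposition; it is quoted as Albert's classical 1943 result with a citation and left unproved. Your argument is correct and is exactly the standard proof: establish the bijection between normal subloops of $Q$ and blocks of the transitive group $\Mlt(Q)$ containing $1$, by showing that cosets of a normal subloop form a system of imprimitivity, and conversely that a block $B\ni 1$ is a subloop (via $L_a(B)=B=R_a(B)$ for $a\in B$) which is $\Inn(Q)$-invariant (since inner mappings fix $1$). There is nothing to compare against in the paper itself.
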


\begin{lemma}
\lemlabel{centralizer}
Let $Q$ be a simple nonassociative automorphic loop with inversion map $J$. If $J\ne\mathrm{id}_Q$ then $C_{\Mlt(Q)}(J) = \Inn(Q)$.
\end{lemma}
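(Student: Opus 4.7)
The plan is to combine the automorphic property (which gives $\Inn(Q) \le C_{\Mlt(Q)}(J)$) with the primitivity of $\Mlt(Q)$ (which gives maximality of $\Inn(Q)$), then eliminate the only other possibility using the hypothesis $J \ne \mathrm{id}_Q$.

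First I would show the inclusion $\Inn(Q) \le C_{\Mlt(Q)}(J)$. Since $Q$ is automorphic, every $\varphi \in \Inn(Q)$ is an automorphism of $Q$, and automorphisms commute with the inversion map: $(x)\varphi J = ((x)\varphi)^{-1} = (x^{-1})\varphi = (x)J\varphi$. So $\Inn(Q) \subseteq C_{\Mlt(Q)}(J)$. (Note that $J$ fixes $1$ and normalizes $\Mlt(Q)$ by Corollary \corref{AAIP}, so the centralizer is well-defined and contains the stabilizer of $1$.)

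Next, since $Q$ is simple, Proposition \prpref{albert} tells us that $\Mlt(Q)$ is primitive on $Q$. By a standard property of primitive permutation groups, the point stabilizer $\Mlt(Q)_1 = \Inn(Q)$ is a maximal subgroup of $\Mlt(Q)$. Therefore the intermediate subgroup $C_{\Mlt(Q)}(J)$ equals either $\Inn(Q)$ or all of $\Mlt(Q)$.

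The final step is to rule out $C_{\Mlt(Q)}(J) = \Mlt(Q)$ using $J \ne \mathrm{id}_Q$. If $J$ centralized all of $\Mlt(Q)$, then in particular $JL_x = L_x J$ for every $x \in Q$. Applying both sides to $y \in Q$ and using the AAIP (Proposition \prpref{AAIP}),
\[
x y^{-1} = (y)JL_x = (y)L_x J = (xy)^{-1} = y^{-1} x^{-1}
\]
for all $x, y \in Q$. Setting $y = 1$ yields $x = x^{-1}$ for every $x$, i.e., $J = \mathrm{id}_Q$, contradicting the hypothesis. Hence $C_{\Mlt(Q)}(J) = \Inn(Q)$, as desired. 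I do not anticipate a serious obstacle here; the only subtle point is citing the maximality of a point stabilizer in a primitive group, which is standard. Note that nonassociativity of $Q$ is not actually used in this argument, only simplicity and $J \ne \mathrm{id}_Q$.
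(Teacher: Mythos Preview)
Your proof is correct and follows essentially the same route as the paper: inclusion of $\Inn(Q)$ in the centralizer via the automorphic property, maximality of $\Inn(Q)$ via primitivity, and elimination of the full group using $J\ne\id_Q$ by showing some $L_x$ fails to commute with $J$. Your observation that nonassociativity is not needed is also correct.
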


\begin{proof}
Since $Q$ is automorphic, $J$ commutes with every inner mapping. Therefore $\Inn(Q)\leq C_{\Mlt(Q)}(J)$. Since $\Mlt(Q)$ is primitive by Proposition \prpref{albert}, $\Inn(Q)$ is a maximal subgroup of $\Mlt(Q)$. Since $J \neq \id_Q$, there is $x\in Q$ such that $x\ne x^{-1}$, and so $xJL_x =1\ne x^{-2}=xL_xJ$. Hence $C_{Mlt(Q)}(J) \neq \Mlt{Q}$, and so the desired equality holds.
\end{proof}

Recall that the \emph{socle} $\Soc(G)$ of a group $G$ is the subgroup generated by the minimal normal subgroups of $G$.
By the O'Nan-Scott Theorem \cite[Thm. 4.1A]{DM}, the analysis of a finite primitive group $G$ divides into two cases depending on whether or not $\Soc(G)$ is regular.

\begin{proposition}
Let $Q$ be a finite simple nonassociative automorphic loop. Then the socle $\Soc(\Mlt(Q))$ is not regular.
\end{proposition}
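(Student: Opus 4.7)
The plan is to suppose, for contradiction, that $N:=\Soc(\Mlt(Q))$ acts regularly on $Q$, and to derive a contradiction by analyzing the action of the inversion map $J$ on $N$. Set $G=\Mlt(Q)$ and $G_1=\Inn(Q)$; regularity of $N$ gives $G_1\cap N=1$ and $G=N\rtimes G_1$. By Corollary~\corref{AAIP}, $J$ normalizes $G$ in $\Sym(Q)$, and since $N$ is characteristic in $G$ as its socle, conjugation by $J$ restricts to an automorphism $\tau$ of $N$ with $\tau^2=\id_N$.

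First I would dispatch the case $J=\id_Q$: then every element of $Q$ is its own inverse, so the AAIP yields $xy=(xy)\inv=y\inv x\inv=yx$, making $Q$ commutative and contradicting the main theorem of \cite{GKN}. So assume $J\neq\id_Q$. Lemma~\lemref{centralizer} gives $C_G(J)=G_1$, and $G_1\cap N=1$ then forces $C_N(J)=1$, so $\tau$ is fixed-point-free on $N$. The subtle step is ruling out $\tau=\id_N$: if $\tau=\id_N$ then $J\in C_{\Sym(Q)}(N)$, and the centralizer of a transitive subgroup of $\Sym(Q)$ is semi-regular (for any $q\in Q$ one has $q=n(1)$ for some $n\in N$, and any $\sigma$ commuting with $N$ then satisfies $\sigma(q)=n\sigma(1)$, so $\sigma$ is determined by $\sigma(1)$), so $J(1)=1$ would force $J=\id_Q$, a contradiction.

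Hence $\tau$ has order exactly $2$ and acts on $N$ without nontrivial fixed points. I would then invoke the classical theorem: the map $g\mapsto g\,\tau(g)\inv$ on $N$ is injective by fixed-point-freeness of $\tau$, hence bijective, so every $x\in N$ has the form $g\,\tau(g)\inv$, yielding $\tau(x)=x\inv$; applying the homomorphism property of $\tau$ to products then forces $N$ abelian, and Cauchy's theorem together with fixed-point-freeness rules out involutions in $N$, so $|N|$ is odd. Since $N$ is regular on $Q$, $|Q|=|N|$ is odd, so by the Odd Order Theorem (Theorem~\thmref{ft}) $Q$ is solvable; combined with the simplicity of $Q$ this forces $Q$ to be an abelian group, contradicting nonassociativity.

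The main obstacle I anticipate is the careful handling of the case $\tau=\id_N$, which requires combining the AAIP-driven fact that $J$ normalizes $G$ in $\Sym(Q)$ with the elementary semi-regularity of the centralizer of a transitive permutation group. Once that case is eliminated, the remainder reduces to the classical fixed-point-free automorphism theorem combined with the Odd Order Theorem for automorphic loops already established in the paper.
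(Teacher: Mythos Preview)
Your argument is correct. Note, though, that the step you call ``subtle'' is actually redundant: once you have $C_N(J)=N\cap C_G(J)=N\cap G_1=1$ from Lemma~\lemref{centralizer} and regularity, the automorphism $\tau$ already has no nontrivial fixed points, so $\tau=\id_N$ is impossible since $N\neq 1$. The separate semi-regularity argument is not needed.

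The paper's proof reaches the same setup (conjugation by $J$ preserves the socle, and by Lemma~\lemref{centralizer} any fixed point of $J$ in $S$ lies in $\Inn(Q)\cap S=1$) but then finishes in the opposite direction. Rather than invoking the Burnside--style theorem on fixed-point-free involutions to force $|S|$ odd and then contradict Theorem~\thmref{ft}, the paper applies Theorem~\thmref{ft} first to conclude $|S|=|Q|$ is even, and then a simple parity count (an involution acting on a set of even size with one known fixed point, namely $\id_Q\in S$, must have a second) produces a nonidentity element of $S\cap\Inn(Q)$, contradicting regularity directly. The paper's route is slightly shorter and avoids the classical fixed-point-free automorphism theorem; your route is more self-contained in that it re-derives the odd-order conclusion structurally rather than citing it upfront. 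Both handle the case $J=\id_Q$ via commutativity and \cite{GKN} (the paper additionally passes through the $2$-loop results of \cite{JKV1}).
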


\begin{proof}
Suppose $S = \Soc(\Mlt(Q))$ is regular. Recall that $J$ normalizes $\Mlt(Q)$ in $\Sym(Q)$ by Corollary \corref{AAIP}. Thus since $S$ is characteristic in $\Mlt(Q)$, $S$ is normalized by $J$. By Theorem \thmref{ft}, $|S| = |Q|$ is even. Thus $J$ fixes a nonidentity element $s \in S$. If $J \neq \mathrm{id}_Q$, then by Lemma \lemref{centralizer}, $s \in \Inn(Q)$. But then $(1)s = 1$, which contradicts the regularity of $S$. Therefore $J = \mathrm{id}_Q$ and so $Q$ has exponent $2$. By \cite[Thm. 6.2]{JKV1}, $Q$ has order a power of $2$ and then by \cite[Thm. 3]{GKN}, $Q$ is solvable, a contradiction.
\end{proof}

By the O'Nan-Scott Theorem, it follows that $\Mlt(Q)$ is of almost simple type, of diagonal type or of product type \cite{DM}.

Although the classification of finite simple automorphic loops remains open, results from group theory about characteristic subgroups hold analogously for characteristic subloops of automorphic loops with essentially the same proofs (\emph{cf}. the closing remarks of \cite{GKN}). Part (ii) of the following result is \cite[Thm. 2.2(ii)]{BP}. 

\begin{theorem}
\thmlabel{characteristic}
Let $Q$ be an automorphic loop.
\begin{enumerate}
\item[(i)] If $T \Char S\normal Q$, then $T \normal Q$.
\item[(ii)] Every characteristic subloop of $Q$ is normal.
\item[(iii)] If $Q$ is finite and characteristically simple, then $Q$ is a direct product of isomorphic simple loops.
\end{enumerate}
\end{theorem}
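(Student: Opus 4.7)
For part (i), the proof is a direct two-line consequence of the hypothesis that $Q$ is automorphic: every $\varphi\in\Inn(Q)$ lies in $\Aut(Q)$, normality $S\normal Q$ gives $(S)\varphi=S$ so that $\varphi$ restricts to an automorphism of $S$, and then $T\Char S$ forces $(T)\varphi=T$. Since this holds for all $\varphi\in\Inn(Q)$, $T$ is $\Inn(Q)$-invariant, i.e., $T\normal Q$. Part (ii) is the special case $S=Q$, which is the cited result of Bruck and Paige.

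For part (iii), the plan is to transplant the classical argument for finite characteristically simple groups, using part (i) as the bridge from ``characteristic in a normal subloop'' to ``normal in $Q$''. Proceed by strong induction on $|Q|$. Choose a minimal normal subloop $N\normal Q$, which exists by finiteness. The $\Aut(Q)$-orbit $\{(N)\alpha : \alpha\in\Aut(Q)\}$ generates a subloop that is $\Aut(Q)$-invariant, hence characteristic in $Q$, and nontrivial; characteristic simplicity forces it to be all of $Q$.

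Now extract from this orbit a maximal subfamily $\{N_1,\ldots,N_r\}$ whose join in $Q$ is an internal direct product. For any other conjugate $N'=(N)\alpha$, the intersection $N'\cap N_1\cdots N_r$ is a normal subloop of $N'$, so by maximality it is nontrivial and then by minimality of $N'$ it equals $N'$; hence $N'\le N_1\cdots N_r$, the join exhausts the orbit, and $Q=N_1\times\cdots\times N_r$. Part (i) shows that each $N_i$ is itself characteristically simple: any $T\Char N_i$ satisfies $T\normal Q$ by (i), and since $T\le N_i$ with $N_i$ minimal normal in $Q$, either $T=1$ or $T=N_i$. If $r=1$ then $Q=N$ is a minimal normal subloop of itself, so $Q$ is simple. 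Otherwise $|N_i|<|Q|$, the inductive hypothesis presents each $N_i$ as a direct product of isomorphic simple loops, and since the $N_i$ are $\Aut(Q)$-images of one another these simple factors are mutually isomorphic.

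The main obstacle is the genuine loop-theoretic content hidden in the phrase ``internal direct product''. In groups, $N_1\cap N_2=1$ for normal $N_1,N_2$ automatically yields elementwise commutation via $[N_1,N_2]\le N_1\cap N_2=1$, making the join into a direct product for free. In an automorphic loop one must additionally verify that the relevant commutators and associators between elements of distinct factors $N_i$ and $N_j$ vanish, using both normality of each $N_i$ (so that commutators and associators are forced to lie in the appropriate intersection) and $\Inn(Q)\le\Aut(Q)$ (to track how inner mappings move elements across factors). Once this bookkeeping is in place the induction runs exactly as stated, mirroring the group-theoretic proof essentially verbatim.
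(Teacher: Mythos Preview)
Your proofs of (i) and (ii) match the paper's exactly. For (iii), your argument is correct but takes a longer route than the paper's. The paper avoids induction entirely: after writing $Q = S_1 \times \cdots \times S_m$ as the product of the $\Aut(Q)$-orbit of a minimal normal subloop $S$, it simply observes that $S$, being simultaneously a minimal normal subloop of $Q$ and a direct factor, must already be simple---any proper nontrivial normal subloop of $S$ would be normal in $Q$ (normal subloops of a direct factor are normal in the product), contradicting minimality of $S$. This single step replaces your induction through characteristically simple $N_i$; your use of part (i) to deduce that each $N_i$ is characteristically simple is not needed in the paper's argument. On the other hand, your ``maximal subfamily'' extraction is more careful than the paper's bare assertion that pairwise trivial intersection suffices.

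Your closing concern, however, is misplaced: the fact that a product of normal subloops with trivial mutual intersections is an internal direct product is a general loop-theoretic result (the paper cites Bruck for it), valid because loops form a Mal'cev variety in which congruences permute. It does not depend on $\Inn(Q)\le\Aut(Q)$, so the associator/commutator bookkeeping you anticipate is already handled in full generality and is not where the automorphic hypothesis is doing work.
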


\begin{proof}
Every inner mapping leaves $S$ invariant, hence acts as an automorphism of $S$. Since $T$ is characteristic in $S$, $T\varphi = T$ for all $\varphi\in \Inn(Q)$. This establishes (i), and (ii) follows from (i) by taking $S = Q$. Now suppose $Q$ is finite and characteristically simple, and let $S=S_1$ be a minimal normal subloop. Consider the orbit $\{S_1,\ldots,S_m\}$ of $S$ under $\Aut(Q)$. Each $S_i$, being the image of a minimal normal subloop of $Q$ under an automorphism, is also a minimal normal subloop of $Q$. Since each $S_i\cap S_j$ is normal in $Q$, it follows from minimality that the subloops $S_i$ intersect pairwise trivially. Thus $S_1\cdots S_m$ is a direct product \cite{Bruck}. Since automorphisms map the direct factors of $S_1\cdots S_m$ to each other, the direct product is characteristic in $Q$. Thus $Q=S_1\cdots S_m$ because $Q$ is characteristically simple. Since $S$ is both a minimal normal subloop and a direct factor of $Q$, $S$ must be simple. This establishes (iii).
\end{proof}

\begin{corollary}
\corlabel{minimal_normal}
A minimal normal subloop of a finite automorphic loop is a direct product of isomorphic simple loops.
\end{corollary}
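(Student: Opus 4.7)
The plan is to deduce this directly from Theorem \thmref{characteristic}, essentially by mimicking the standard group-theoretic argument that a minimal normal subgroup of a finite group is characteristically simple. First, I would let $S$ be a minimal normal subloop of a finite automorphic loop $Q$. Since subloops of automorphic loops are automorphic (the automorphic property is equational, by Proposition \prpref{basic}), $S$ is itself a finite automorphic loop, so Theorem \thmref{characteristic}(iii) will apply to it once I show that $S$ is characteristically simple.

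To establish that $S$ has no nontrivial characteristic subloops, I would take any $T \Char S$ and invoke Theorem \thmref{characteristic}(i): since $T \Char S \normal Q$, we get $T \normal Q$. Minimality of $S$ as a normal subloop of $Q$ then forces $T = 1$ or $T = S$. Hence $S$ is characteristically simple as a loop.

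Applying Theorem \thmref{characteristic}(iii) to $S$ then yields the desired decomposition of $S$ as a direct product of isomorphic simple loops, completing the proof. There is no real obstacle here; the whole argument is a two-step appeal to the preceding theorem, with the only subtlety being the verification that $S$ inherits the automorphic property, which is immediate from the equational characterization in Proposition \prpref{basic}.
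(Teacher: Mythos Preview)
Your proposal is correct and follows essentially the same two-step approach as the paper's proof: use Theorem \thmref{characteristic}(i) to conclude that $S$ is characteristically simple, then apply Theorem \thmref{characteristic}(iii). Your version is slightly more explicit in noting that $S$ inherits the automorphic property (needed to invoke part (iii)), but the argument is otherwise identical.
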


\begin{proof}
If $S$ is a minimal normal subloop of $Q$, then by Theorem \thmref{characteristic}(i), $S$ is characteristically simple, and we are done by Theorem \thmref{characteristic}(iii).
\end{proof}

\begin{proposition}
Let $Q$ be an automorphic loop.
\begin{enumerate}
\item[(i)] $Q^{(n)} \normal Q$ for each $n > 0$.
\item[(ii)] If $Q$ is solvable, then the \emph{derived series} $Q \backnormal Q' \backnormal Q'' \backnormal \cdots \backnormal Q^{(n)} = 1$ is a normal series, that is, $Q^{(k)} \normal Q$ for all $k > 0$.
\end{enumerate}
\end{proposition}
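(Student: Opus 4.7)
The plan is to prove both parts simultaneously by a short induction, leveraging two facts already established in the excerpt: the derived subloop of any loop $Q$ is characteristic in $Q$ (observed in Section \secref{preliminaries} from the fact that automorphisms permute commutators and associators), and Theorem \thmref{characteristic}(i), which says that if $T\Char S\normal Q$, then $T\normal Q$.

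For part (i), I would argue by induction on $n$. The base case $n=1$ is immediate: $Q'\Char Q$, so $Q'\normal Q$ by Theorem \thmref{characteristic}(ii). For the inductive step, assume $Q^{(n)}\normal Q$. Since $Q^{(n)}$ is itself a subloop of an automorphic loop, hence automorphic, its derived subloop $Q^{(n+1)} = (Q^{(n)})'$ is characteristic in $Q^{(n)}$, again by the general fact about commutators and associators being permuted by automorphisms. Applying Theorem \thmref{characteristic}(i) with $S = Q^{(n)}$ and $T = Q^{(n+1)}$ yields $Q^{(n+1)}\normal Q$, completing the induction.

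Part (ii) is then a direct corollary. If $Q$ is solvable, by definition $Q^{(n)} = 1$ for some $n$, and each $Q^{(k)}$ with $k \ge 1$ is normal in $Q$ by (i). The inclusions $Q^{(k+1)}\le Q^{(k)}$ are immediate from the definition of the derived series, and $Q^{(k+1)}\normal Q^{(k)}$ since $Q^{(k+1)}$ is characteristic in $Q^{(k)}$ (indeed normal in $Q$, which is stronger). Thus the derived series is a normal series, as claimed.

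There is no substantive obstacle here; the proposition is essentially a formal consequence of Theorem \thmref{characteristic}(i) once one observes that the derived subloop is characteristic, a fact that was recorded but not exploited in this form earlier. The only thing to be careful about is distinguishing ``$Q^{(n+1)}$ is characteristic in $Q^{(n)}$'' (which is what we need) from ``$Q^{(n+1)}$ is characteristic in $Q$'' (which would also follow, but is not required for the argument).
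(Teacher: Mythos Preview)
Your proposal is correct and follows essentially the same route as the paper: observe that $Q^{(k)}$ is characteristic in $Q^{(k-1)}$ for each $k\geq 1$, then apply Theorem \thmref{characteristic}(i) inductively to conclude $Q^{(n)}\normal Q$, with (ii) following immediately from (i). One minor remark: the fact that $(Q^{(n)})'\Char Q^{(n)}$ holds for \emph{any} loop, so you need not invoke that $Q^{(n)}$ is automorphic at that step.
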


\begin{proof}
Each $Q^{(k)}$ is characteristic in $Q^{(k-1)}$ for all $k\geq 1$. By Theorem \thmref{characteristic}(i), each $Q^{(n)} \normal Q$. This proves (i), and (ii) follows from (i).
\end{proof}

\section{Split Middle Nuclear Extensions}
\seclabel{mid_nuc}

In this brief section we will examine automorphic loops which are split extensions by their middle nuclei. The following proposition shows that this notion can be defined in either of the usual group theoretic ways.

\begin{proposition}
\prplabel{split}
Let $Q$ be a loop with normal middle nucleus $N_{\mu}=N_{\mu}(Q)$. The following conditions are equivalent.
\begin{enumerate}
\item[(i)] The natural homomorphism $\eta: Q\to Q/N_{\mu}$ splits, that is, there is a homomorphism $\sigma: Q/N_{\mu} \to Q$ such that $\sigma\eta = \id_{Q/N_{\mu}}$.
\item[(ii)] There exists a subloop $S$ of $Q$ such that $Q = SN_{\mu}$ and $S\cap N_{\mu} = 1$.
\end{enumerate}
\end{proposition}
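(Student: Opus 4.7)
The plan is to adapt the standard group-theoretic splitting argument to loops, relying on two facts that hold because $N_\mu$ is normal in $Q$: the cosets $qN_\mu$ partition $Q$ and $Q/N_\mu$ is a loop under $(qN_\mu)(rN_\mu)=(qr)N_\mu$; and two elements $q,r\in Q$ lie in the same coset iff $q\ldiv r\in N_\mu$, in which case $r=q\cdot(q\ldiv r)$. These are general facts about loops modulo normal subloops and require nothing about automorphicity or about $N_\mu$ being the middle nucleus.

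For (i)$\Rightarrow$(ii), I would take $S=(Q/N_\mu)\sigma$, which is a subloop of $Q$ because $\sigma$ is a homomorphism. To verify $S\cap N_\mu=\{1\}$: if $s=(x)\sigma\in N_\mu$ then $(s)\eta=1$, but also $(s)\eta=(x)(\sigma\eta)=x$, so $x=1$ and hence $s=(1)\sigma=1$. To verify $Q=SN_\mu$: given $q\in Q$, set $s=((q)\eta)\sigma\in S$; then $(s)\eta=(q)\eta$, so $s$ and $q$ lie in the same coset of $N_\mu$, and therefore $q=s\cdot n$ with $n=s\ldiv q\in N_\mu$.

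For the converse (ii)$\Rightarrow$(i), I would first show that every coset of $N_\mu$ meets $S$ in exactly one element. Existence: if $q=sn$ with $s\in S$, $n\in N_\mu$, then $qN_\mu=sN_\mu$. Uniqueness is the main (small) technical point: if $s_1N_\mu=s_2N_\mu$ with $s_i\in S$, then by the coset characterization $s_1\ldiv s_2\in N_\mu$; on the other hand $s_1\ldiv s_2\in S$ since $S$ is a subloop, so $s_1\ldiv s_2\in S\cap N_\mu=\{1\}$, giving $s_1=s_2$. Define $\sigma:Q/N_\mu\to Q$ by sending each coset to its unique $S$-representative. By construction $\sigma\eta=\id_{Q/N_\mu}$, and $\sigma$ is a loop homomorphism because for $x,y\in Q/N_\mu$ the product $(x)\sigma\cdot(y)\sigma$ lies in $S$ (a subloop) and in the coset $xy$, so uniqueness forces $(xy)\sigma=(x)\sigma\cdot(y)\sigma$.

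The only genuinely loop-specific issue is that the uniqueness argument in (ii)$\Rightarrow$(i) must be phrased via the left division $\ldiv$ rather than via subtraction or a group-theoretic inverse product; once that is done, the argument is a transcription of the group case. In particular, no identity special to automorphic loops enters the proof, and the fact that $N_\mu$ is specifically the middle nucleus is not used beyond its normality, which is exactly what the statement assumes.
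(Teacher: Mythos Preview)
Your proof is correct and follows the same overall strategy as the paper's. The only substantive difference is in the uniqueness step of (ii)$\Rightarrow$(i): you argue purely via the coset characterization $s_1N_\mu=s_2N_\mu\Leftrightarrow s_1\ldiv s_2\in N_\mu$, whereas the paper takes $sa=tb$ and manipulates directly, writing $s=(tb)\cdot a^{-1}=t\cdot(ba^{-1})$ using that $b\in N_\mu$ (and that $N_\mu$, being associative, is a group so $a^{-1}$ makes sense). Thus the paper's argument genuinely uses the middle nucleus property at this one point, while yours does not; your observation that only normality of $N_\mu$ is needed is correct and is a mild sharpening of the paper's proof. Either way the argument is short and routine.
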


\begin{proof}
Assume (i) holds. Let $S = \sigma(Q/N_{\mu})$, which is a subloop of $Q$ since $\sigma$ is a homomorphism. Clearly $S\cap N_{\mu} = 1$. For $x\in Q$, let $s = (x)\eta\sigma = (x N_{\mu})\sigma$ and let $a = s\backslash x$. Then $(a)\eta = N_{\mu}$, that is, $a\in \ker(\eta) = N_{\mu}$. Therefore $Q = SN_{\mu}$ and (ii) holds.

Assume (ii) holds. Suppose $sa = tb$ for $s,t\in S$ and $a$, $b\in N_{\mu}$. Then $s = tb\cdot a\inv = t\cdot ba\inv$ since $a$, $b\in N_{\mu}$. Hence $t\ldiv s = ba\inv \in S\cap N_{\mu} = 1$, and so $t=s$ and $b=a$. Thus each $x\in Q$ has a unique factorization $x = sa$ for some $s\in S$, $a\in N_{\mu}$. In particular, the subloop $S$ is a complete set of left coset representatives of $N_{\mu}$. Therefore setting $(sN_{\mu})\sigma = s$ for each $s\in S$ yields a well-defined map $\sigma : Q/N_{\mu} \to Q$ with $(sN_{\mu})\eta\sigma = sN_{\mu}$. Finally $\sigma$ is a homomorphism by the definition of coset multiplication.
\end{proof}

We will say that an automorphic loop $Q$ is a \emph{split middle nuclear extension} (of $S$ by $N_{\mu}$) if either, and hence both, of the conditions of Proposition \prpref{split} hold. In automorphic loops, the multiplication in a split middle nuclear extension has a very specific form:

\begin{proposition}
\prplabel{mid_nuc}
Let $Q$ be an automorphic loop. For all $a$, $b\in N_{\mu}(Q)$ and all $x$, $y\in Q$,
\begin{equation}
\eqnlabel{mid_nuc_mult}
xa\cdot yb = xy\cdot ((a)T_y\cdot b)L_{y,x}\,.
\end{equation}
\end{proposition}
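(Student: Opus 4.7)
The plan is to expand both sides of the claimed identity until it reduces to a single associator identity $(ay)b=a(yb)$ for $a,b\in N_\mu(Q)$ and $y\in Q$, and then prove that identity using the inner mapping $T_y$ together with the AAIP.

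First I would simplify the right-hand side. Because $L_{y,x}=L_yL_xL_{xy}^{-1}$, for any $q\in Q$ one has $xy\cdot(q)L_{y,x}=x(yq)$. Applying this with $q=(a)T_y\cdot b$ reduces the right-hand side to $x\bigl(y\cdot((a)T_y\cdot b)\bigr)$. Since $N_\mu=N_\mu(Q)$ is normal by Proposition~\prpref{nuclei}, we have $(a)T_y\in N_\mu$, and the middle-nuclear property gives $y\cdot((a)T_y\cdot b)=(y\cdot(a)T_y)\cdot b$; as $(a)T_y=y\backslash(ay)$ forces $y\cdot(a)T_y=ay$, the right-hand side collapses to $x\cdot((ay)b)$. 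On the left side, $a\in N_\mu$ directly gives $xa\cdot yb=x\cdot a(yb)$. So the identity reduces to
\[
(ay)b=a(yb)\qquad\text{for all $a,b\in N_\mu$ and $y\in Q$.}
\]

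To establish this associator identity, set $a'=(a)T_y$ and $b'=(b)T_{y^{-1}}$; both lie in $N_\mu$ by normality. From the definition of $T_y$ we get $ay=y\cdot a'$. A parallel computation shows $by^{-1}=y^{-1}\cdot b'$, which combined with the AAIP (Proposition~\prpref{AAIP}) and the fact that the automorphism $T_{y^{-1}}$ commutes with inversion yields $yb=b'\cdot y$. Middle-nuclear associativity of $a'\in N_\mu$ and of $b'\in N_\mu$ then gives $(ay)b=(ya')b=y(a'b)$ and $a(yb)=a(b'y)=(ab')y$. It remains to verify $(ab')y=y(a'b)$. Using the identity $zy=y\cdot(z)T_y$ with $z=ab'$, this is equivalent to $T_y(ab')=a'b$; and since $T_y\in\Aut(Q)$ and $T_yT_{y^{-1}}=\id_Q$ by \eqnref{Tinv}, we compute $T_y(ab')=T_y(a)\cdot T_y(b')=a'\cdot T_y(T_{y^{-1}}(b))=a'b$, as required.

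The principal obstacle is exactly this associator step: although $a$ and $b$ lie in $N_\mu$, the element $y$ sits between them, and middle-nuclear associativity alone does not allow one to reposition $a$ past $y$ (recall from Proposition~\prpref{nuclei} that $N_\mu$ may properly contain $N_\lambda=N_\rho$). The decisive trick is to move $a$ across $y$ using $T_y$ in one direction and $b$ across $y$ using $T_{y^{-1}}$ in the other, after which the relation $T_yT_{y^{-1}}=\id_Q$ matches the two resulting expressions.
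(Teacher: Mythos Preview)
Your proof is correct, and the overall architecture coincides with the paper's: both reduce \eqnref{mid_nuc_mult} to the single associator identity $a(yb)=(ay)b$ for $a,b\in N_\mu(Q)$ (the paper records this as \eqnref{mid_nuc_tmp}), via exactly the same chain of middle-nuclear moves and the unwinding of $L_{y,x}$.

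The difference lies in how that key identity is established. The paper conjugates by the \emph{nuclear} element: since $b\in N_\mu$ one has $T_b=R_bL_{b\inv}$ with $(z)T_b=b\inv zb$, and applying $T_b\in\Aut(Q)$ to $ay$ yields the result in one line. You instead conjugate by the \emph{arbitrary} element $y$, pushing $a$ rightward via $T_y$ and $b$ leftward via $T_{y\inv}$, then reconciling with $T_yT_{y\inv}=\id_Q$. Your route is a touch longer (the AAIP detour to get $yb=b'y$ is unnecessary---it follows immediately from $b'=(b)T_y\inv$ and the definition of $T_y$), but it has the virtue of treating $a$ and $b$ symmetrically and never needing the explicit conjugation formula for $T_b$. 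Either argument works; the paper's is marginally more economical.
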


\begin{proof}
First we prove
\begin{equation}
\eqnlabel{mid_nuc_tmp}
a\cdot yb = ay\cdot b\,.
\end{equation}
Since $b\in N_{\mu}=N_{\mu}(Q)$, we have $T_b = R_b L_{b\inv}$. Thus we compute
\begin{displaymath}
    ay\cdot b = b\cdot (ay)T_b = b[(a)T_b\cdot (y)T_b] =  b[b\inv ab\cdot b\inv yb] = b[b\inv a\cdot yb] = (b\cdot b\inv a)\cdot yb = a\cdot yb,
\end{displaymath}
where we have used $T_b\in \Aut(Q)$ in the second equality, $b,b\inv\in N_{\mu}$ in the fourth equality and $b\inv a\in N_{\mu}$ in the fifth equality. This establishes \eqnref{mid_nuc_tmp}.

For \eqnref{mid_nuc_mult}, we compute
\[
xa\cdot yb = x(a\cdot yb) \byeqn{mid_nuc_tmp} x(ay\cdot b)
= x\cdot (y\cdot (a)T_y)b = x\cdot y((a)T_y\cdot b)
= xy\cdot ((a)T_y\cdot b)L_{y,x}\,,
\]
where we have used $(a)T_y\in N_{\mu}$ (since $N_{\mu} \normal Q$ by Proposition \prpref{nuclei}) in the fourth equality.
\end{proof}

\begin{corollary}
\corlabel{split_mult}
Let $Q$ be an automorphic loop which is a split middle nuclear extension $Q = SN_{\mu}$. Then for all $s$, $t\in S$, $a$, $b\in N_{\mu}$,
\begin{equation}
\eqnlabel{split_mult}
sa\cdot tb = st\cdot ((a)T_t\cdot b)L_{t,s}\,,
\end{equation}
where the right hand side is the unique factorization of the left side into an element $st\in S$ and an element $((a)T_t\cdot b)L_{t,s}\in N_{\mu}$.
\end{corollary}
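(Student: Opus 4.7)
The plan is that Corollary \corref{split_mult} is essentially a bookkeeping consequence of Proposition \prpref{mid_nuc} combined with Proposition \prpref{split}, so the proof should be very short. The identity itself is simply Proposition \prpref{mid_nuc} applied with $x=s$ and $y=t$; no further calculation is needed to establish the equation.

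What remains is to verify that the displayed factorization is genuinely a decomposition of the form ``element of $S$ times element of $N_\mu$'' and that such decompositions are unique. First I would note $st\in S$ since $S$ is a subloop of $Q$. Next, I would verify $((a)T_t\cdot b)L_{t,s}\in N_\mu$: by Proposition \prpref{nuclei}(ii) the middle nucleus $N_\mu=N_\mu(Q)$ is a normal subloop of $Q$, so it is invariant under every inner mapping. Applying this to $T_t$ shows $(a)T_t\in N_\mu$, so $(a)T_t\cdot b\in N_\mu$ since $N_\mu$ is a subloop, and a further application to the inner mapping $L_{t,s}$ yields $((a)T_t\cdot b)L_{t,s}\in N_\mu$, as required.

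For uniqueness, I would simply refer back to the argument already carried out in the proof of Proposition \prpref{split}: under the hypothesis that $Q=SN_\mu$ with $S\cap N_\mu=1$, if $sa=tb$ with $s,t\in S$ and $a,b\in N_\mu$, then one deduces $t\ldiv s = ba\inv\in S\cap N_\mu=1$, forcing $s=t$ and $a=b$. Since by hypothesis $Q$ is a split middle nuclear extension, this uniqueness applies and pins down the factorization on the right-hand side of \eqnref{split_mult}.

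There is no serious obstacle here; the only potential hiccup is keeping track of which pieces really do lie in $N_\mu$ (so that Proposition \prpref{mid_nuc} is being read as a factorization and not merely as an equation in $Q$), but that is handled by the normality of $N_\mu$ together with the fact that both $T_t$ and $L_{t,s}$ are inner mappings. Thus the corollary follows with a two-line argument: invoke \eqnref{mid_nuc_mult}, observe membership in $S$ and $N_\mu$ of the two factors, and cite the uniqueness argument from Proposition \prpref{split}.
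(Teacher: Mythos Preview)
Your proposal is correct and matches the paper's approach: the paper states the corollary without proof, treating it as an immediate consequence of Proposition \prpref{mid_nuc} (for the identity) together with the unique factorization established in the proof of Proposition \prpref{split}. Your explicit verification that $st\in S$ and $((a)T_t\cdot b)L_{t,s}\in N_\mu$ via the normality of $N_\mu$ under inner mappings is exactly the tacit reasoning the paper relies on.
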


Just as split extensions of groups (internal semidirect products) lead naturally to external semidirect products, so do split middle nuclear extensions of automorphic loops lead to an ``external'' construction of automorphic loops. The input data are a loop $S$, a group $N$, a mapping $\phi : S\to \Aut(N)$ satisfying $(1)\phi=1$ and a mapping $\alpha : S\times S\to \Aut(N)$ satisfying $(1,s)\alpha = (s,1)\alpha = 1$ for all $s\in S$. On $Q := S\times N$, we define operations by
\begin{align*}
(s,a)\cdot (t,b) &= (st,(a^{(t)\phi}b)^{(t,s)\alpha})\,, \\
(s,a)\ldiv (t,b) &= (s\ldiv t, (a\inv)^{(s\ldiv t)\phi} b^{((s\ldiv t,s)\alpha)\inv})\,, \\
(s,a)\rdiv (t,b) &= (s\rdiv t, (a^{((t,s\rdiv t)\alpha)\inv}b\inv)^{((t)\phi)\inv})\,.
\end{align*}
Then it is easy to show $(Q,\cdot,\ldiv,\rdiv)$ is a loop with neutral element $(1,1)$. To get an automorphic loop, it is necessary that $S$ be automorphic and there are various conditions which must be satisfied by $\phi$ and $\alpha$. It is straightforward to find these conditions by simply calculating inner mappings in $Q$ and assuming them to be automorphisms. However, the calculations and the conditions themselves are both lengthy and unenlightening in their full generality. Since we are only going to examine a special case in detail in the next section, we omit the general construction.

\section{Dihedral Automorphic Loops}
\seclabel{dihedral}

We begin with a construction of automorphic loops motivated by Corollary \corref{split_mult}.

\begin{proposition}
Let $(A,+)$ be an abelian group and fix $\alpha\in \Aut(A)$. Let $\Dih(A,\alpha)$ be defined on $\mathbb Z_2\times A$ by
\begin{equation}
\eqnlabel{gen_dihedral}
(i,u)\cdot (j,v) = (i + j, ((-1)^ju + v)\alpha^{ij}).
\end{equation}
Then $(\Dih(A,\alpha),\cdot)$ is an automorphic loop. If $\alpha\neq\id_A$, then $N_{\mu} = \{0\}\times A \cong A$.
\end{proposition}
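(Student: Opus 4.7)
The approach is to reduce the automorphic condition to Proposition \prpref{halfcheck} and then verify the remaining identities by direct coordinate calculation. First I check the loop axioms: $(0,0)$ is plainly a two-sided identity, and the equations $(i,u)(x,y)=(j,v)$ and $(x,y)(i,u)=(j,v)$ have unique solutions, as one reads $x=j-i$ off the first coordinate (in $\mathbb{Z}_2$) and then solves for $y$ using invertibility of $\alpha$. By \prpref{halfcheck}, to establish that $\Dih(A,\alpha)$ is automorphic it then suffices to verify $(A_m)$ and $(A_\ell)$, i.e., that every $T_x$ and every $L_{x,y}$ is an automorphism.

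For $(A_m)$ I compute $T_{(i,u)} = R_{(i,u)} L_{(i,u)}^{-1}$ directly: the $\alpha$-exponents introduced by $R_{(i,u)}$ and $L_{(i,u)}^{-1}$ cancel, giving the clean formula
\[
T_{(i,u)}(j,v) = \bigl(j,\; (-1)^i v + (1-(-1)^j)u\bigr).
\]
Expanding $T_{(i,u)}\bigl((j_1,v_1)(j_2,v_2)\bigr)$ and $T_{(i,u)}(j_1,v_1)\cdot T_{(i,u)}(j_2,v_2)$ and subtracting, the only nontrivial residue is $(1-(-1)^{j_1+j_2})(u - u\alpha^{j_1j_2})$. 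This vanishes in every case: when $j_1=j_2$ the first factor is zero, while when $j_1\neq j_2$ we have $j_1j_2 = 0$ in $\mathbb{Z}_2$, so $\alpha^{j_1j_2} = \mathrm{id}_A$ kills the second factor.

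For $(A_\ell)$ I carry out the analogous computation for $L_{(i,u),(j,v)} = L_{(i,u)}L_{(j,v)}L_{(j,v)(i,u)}^{-1}$. The $\alpha$-exponents simplify via $j(i+k)-(j+i)k = ji-ik$, and a short calculation yields the strikingly $u$-free formula
\[
L_{(i,u),(j,v)}(k,w) = \bigl(k,\; (w + (-1)^{i+k}v(\alpha^{-ik}-1))\alpha^{ji}\bigr).
\]
Multiplicativity is then checked by splitting on $(k_1,k_2)\in\mathbb{Z}_2\times\mathbb{Z}_2$; each of the four cases collapses after one uses that $k_1k_2 = 0$ whenever $k_1\neq k_2$ and that $\alpha^{-ik_a}-1 = 0$ whenever $ik_a=0$.

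Finally, for the middle nucleus, the inclusion $\{0\}\times A \subseteq N_\mu$ is immediate from \eqnref{gen_dihedral}: multiplication by $(0,a)$ on either side merely shifts the second coordinate by $\pm a$ without altering the $\alpha^{ij}$-twist, so the identity $\bigl((i,u)(0,a)\bigr)(j,v)=(i,u)\bigl((0,a)(j,v)\bigr)$ holds unconditionally. For the reverse inclusion, assume $\alpha \neq \mathrm{id}_A$ and suppose $(1,a)\in N_\mu$. Specializing the middle-nucleus identity to $x=(1,t)$, $y=(1,0)$ and expanding both sides, after cancellation one obtains $(t-a)\alpha = t-a\alpha$; this must hold for every $t\in A$, forcing $t\alpha = t$ for all $t$ and contradicting $\alpha \neq \mathrm{id}_A$. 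Hence $N_\mu = \{0\}\times A \cong A$. The main obstacle throughout is purely bookkeeping of signs and of $\alpha$-exponents (in particular, separating $\mathbb{Z}_2$-reduction in the first coordinate from the integer exponents of $\alpha$); no conceptual obstruction appears.
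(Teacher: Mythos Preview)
Your proof is correct and follows essentially the same route as the paper: verify the loop axioms, compute $T_{(i,u)}$ and $L_{(i,u),(j,v)}$ explicitly (your formulas agree with the paper's up to relabeling of indices), check multiplicativity by a $\mathbb{Z}_2\times\mathbb{Z}_2$ case split, and invoke Proposition~\prpref{halfcheck}. The only minor divergence is in the middle-nucleus argument: the paper characterizes $N_\mu$ via the condition that $L_{(j,v),(i,u)}$ fix every element, whereas you verify the associator identity directly; both work and yours is arguably cleaner.
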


\begin{proof}
Throughout the proof, the exponent of $\alpha$ in \eqnref{gen_dihedral} is calculated in $\mathbb Z_2$. Clearly $(0,0)$ is the neutral element. Setting
\begin{align*}
(i,u)\ldiv (j,v) &= (i+j, v\alpha^{-i(j+i)} - (-1)^{i+j}u ),\\
(i,u)\rdiv (j,v) &= (i+j, (-1)^j (u \alpha^{-(i+j)j} - v),
\end{align*}
it is straightforward to show that $\ldiv$ and $\rdiv$ satisfy the properties of divisions in a loop.

The generalized conjugation $T_{(i,u)}$ is given by
\[
(j,v)T_{(i,u)} = (j, (-1)^i v + (1-(-1)^j)u )\,,
\]
as can be readily checked. Note that this is independent of $\alpha$. We check that this is an automorphism. First,
\begin{align*}
[(j,v)\cdot(k,w)]T_{(i,u)} &= (j + k, ((-1)^k v + w)\alpha^{jk} )T_{(i,u)}\\
&= (j+k, (-1)^i((-1)^k v + w)\alpha^{jk} + (1-(-1)^{j+k})u)\\
&= (j+k, (-1)^{i+k}v\alpha^{jk} + (-1)^iw\alpha^{jk} + (1-(-1)^{j+k})u).
\end{align*}
On the other hand, $(j,v)T_{(i,u)}\cdot (k,w)T_{(i,u)} =$
\begin{align*}
&= (j, (-1)^i v + (1-(-1)^j)u )\cdot (k, (-1)^i w + (1-(-1)^k)u )\\
&= (j+k, [(-1)^k((-1)^i v + (1-(-1)^j)u) + (-1)^i w + (1-(-1)^k)u]\alpha^{jk} )\\
&= (j+k, (-1)^{i+k}v\alpha^{jk} + (-1)^iw\alpha^{jk} + h),
\end{align*}
where
\[
h = [(-1)^k(1-(-1)^j)+(1-(-1)^k)]u\alpha^{jk} =
(1-(-1)^{j+k})u\alpha^{jk}\,.
\]
Checking all four possibilities, we see that $(1 - (-1)^{j+k})u\alpha^{jk} = (1 - (-1)^{j+k})u$ for $j,k\in \mathbb{Z}_2$. Thus $T_{(i,u)}$ is an automorphism.

Next, we check that the left inner mappings $L_{(j,v),(i,u)}$ are automorphisms. A lengthy calculation gives
\[
(k,w)L_{(j,v),(i,u)} = (k,[(-1)^{j+k}u(\alpha^{-jk}-\id_A) + w]\alpha^{ij})\,.
\]
Note that this is independent of $v$. We have
\begin{align*}
    (k,w)&L_{(j,v),(i,u)}\cdot (\ell,x)L_{(j,v),(i,u)} \\
    &=
    (k,[(-1)^{j+k}u(\alpha^{-jk}-\id_A) + w]\alpha^{ij})\cdot (\ell,[(-1)^{j+\ell}u(\alpha^{-j\ell}-\id_A) + x]\alpha^{ij})\\
    &=(k+\ell,\{(-1)^{\ell}[(-1)^{j+k}u(\alpha^{-jk}-\id_A) + w] +
    (-1)^{j+\ell}u(\alpha^{-j\ell}-\id_A) + x \}\alpha^{ij}\alpha^{k\ell})\\
    &= (k+\ell, [(-1)^{\ell}w + x + q]\alpha^{ij}\alpha^{k\ell} ),
\end{align*}
where
\begin{align*}
q &= (-1)^{\ell}u[(-1)^{j+k}(\alpha^{-jk}-\id_A) + (-1)^j(\alpha^{-j\ell}-\id_A)]\\
&= (-1)^{j + k + \ell}u(\alpha^{-jk}-\id_A + (-1)^k(\alpha^{-j\ell}-\id_A ))\\
&= (-1)^{j + k + \ell}u(\alpha^{-j(k+\ell)} - \id_A).
\end{align*}
The last equality follows by checking all possible values of $j$, $k$, $\ell\in \mathbb{Z}_2$.
On the other hand, we compute
\begin{align*}
    [(k,w)&\cdot (\ell,x)]L_{(j,v),(i,u)} = \\
    &= (k+\ell,[(-1)^{\ell}w + x]\alpha^{k\ell})L_{(j,v),(i,u)}\\
    &= (k+\ell,\{(-1)^{j+k+\ell}u(\alpha^{-j(k+\ell)}-\id_A) + [(-1)^{\ell}w + x]\alpha^{k\ell}\}\alpha^{ij})\\
    &= (k+\ell,\{(-1)^{j+k+\ell}u(\alpha^{-j(k+\ell)}-\id_A)\alpha^{-k\ell} +(-1)^{\ell}w + x\}\alpha^{k\ell}\alpha^{ij}).
\end{align*}
Now observe that $u(\alpha^{-j(k+\ell)}-\id_A)\alpha^{-k\ell} = u(\alpha^{-j(k+\ell)}-\id_A)$ for all $j$, $k$, $\ell\in \mathbb{Z}_2$ just by checking all possibilities. Thus we see that $L_{(j,v),(i,u)}$ is an automorphism.

Applying Proposition \prpref{halfcheck}, we have shown that $\Dih(A,\alpha)$ is an automorphic loop. It remains to characterize the middle nucleus when $\alpha\neq\id_A$. We have that $(j,v)\in N_m$ if and only if $(k,w) = (k,w)L_{(j,v),(i,u)}$ for all $i$, $k\in \mathbb{Z}_2$, $u$, $w\in A$. Thus matching second components, we require
\begin{equation}
\eqnlabel{mid_nuc_chk}
[(-1)^{j+k}u(\alpha^{-jk} - \id_A) + w]\alpha^{ij} = w
\end{equation}
for all $i$, $k\in \mathbb{Z}_2$, $u$, $w\in A$. Taking $u = 0$, $i = 1$, we must have $w\alpha^j = w$ for all $w\in A$. Thus $\alpha^j = \id_A$. Since $\alpha\neq\id_A$, we must have $j=0$. On the other hand, since \eqnref{mid_nuc_chk} is independent of $v$, it is clear that $(0,v)\in N_{\mu}$. This completes the proof.
\end{proof}

We call the loops $\Dih(A,\alpha)$ \emph{generalized dihedral automorphic loops}. $\Dih(A,\id_A)$ is the usual generalized dihedral group determined by the abelian group $A$. If $A = \mathbb{Z}$, then $\Aut(A) = \mathbb{Z}^* = \{\pm 1\}$. In this case we write $D_{\infty}(c) = \Dih(\mathbb{Z},c)$ where $c = \pm 1$ and refer to these loops as \emph{infinite dihedral automorphic loops}. If $A = \mathbb{Z}_n$, then $\Aut(A) = \mathbb{Z}_n^*$, the group of integers in $\{1,\ldots,n-1\}$ coprime to $m$. We write $D_{2n}(c) = \Dih(\mathbb{Z}_n,c)$ where $c\in \mathbb{Z}_n^*$ and refer to these loops simply as \emph{dihedral automorphic loops}.

In $D_{\infty}(c)$ or $D_{2n}(c)$, the multiplication specializes as follows:
\begin{equation}\label{Eq:Dihedral}
(i,j)\cdot (k,\ell) = (i + k, c^{ik}((-1)^k j + \ell)),
\end{equation}
where $c\in \{\pm 1\}$ in the former case and $c\in \mathbb{Z}_n^*$ in the latter case.

We now show that different values of the parameter $c$ give nonisomorphic dihedral automorphic loops, and we calculate their automorphism groups.

\begin{lemma}\label{Lm:DihedralGens}
Let $Q=D_{2n}(c)$. Then
\begin{enumerate}
\item[(i)] $(0,1)^m = (0,m)$ for every $m\in \mathbb Z$, and $\mathbb Z_n\cong 0\times \mathbb Z_n\le Q$,
\item[(ii)] $|(1,x)|=2$ for every $x\in\mathbb Z_n$,
\item[(iii)] $(1,0)\cdot(0,y) = (1,y)$ for every $y\in\mathbb Z_n$, and $Q = \langle (0,1),\,(1,0)\rangle$.
\end{enumerate}
\end{lemma}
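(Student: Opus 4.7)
The plan is to derive all three parts by direct substitution into the multiplication rule
\[
(i,j)\cdot(k,\ell) = (i+k,\ c^{ik}((-1)^k j + \ell))
\]
from \eqref{Eq:Dihedral}, exploiting the fact that whenever either $i$ or $k$ is $0$, the factor $c^{ik}$ collapses to $1$ and $(-1)^k$ collapses to $1$ in the appropriate places.

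For (i), I would first restrict the formula to $i=k=0$, obtaining $(0,j)\cdot(0,\ell) = (0,j+\ell)$. This shows that $0\times\mathbb Z_n$ is closed under multiplication, has identity $(0,0)$ and inverses $(0,j)^{-1}=(0,-j)$, and is a subloop isomorphic to the cyclic group $(\mathbb Z_n,+)$. Power-associativity (Proposition \prpref{power-associative}) then lets me conclude $(0,1)^m=(0,m)$ by a one-line induction for $m\ge 0$, extended to $m<0$ via inversion.

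For (ii), I would compute $(1,x)\cdot(1,x) = (1+1,\ c^{1}((-1)^{1}x + x)) = (0,\ c(-x+x)) = (0,0)$, so $(1,x)$ has order dividing $2$; since $(1,x)\neq (0,0)$, its order is exactly $2$. For (iii), specializing the formula to $(i,j)=(1,0)$ and $(k,\ell)=(0,y)$ gives $(1,0)\cdot(0,y) = (1,\ c^{0}(0+y)) = (1,y)$. Combined with (i), this shows every element of $Q$ is obtained from $(0,1)$ and $(1,0)$: an element $(0,y)$ is the $y$-th power of $(0,1)$, and an element $(1,y)$ is $(1,0)\cdot (0,1)^y$.

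Since each item reduces to a single application of \eqref{Eq:Dihedral} in a regime where the exponents annihilate, there is no real obstacle; the only thing worth being careful about is confirming that $c^{ik}=1$ whenever one of $i,k$ equals $0$, which is immediate, and that the arithmetic $-x+x=0$ holds in $\mathbb Z_n$ independently of $c$, which is what makes every element of the form $(1,x)$ an involution.
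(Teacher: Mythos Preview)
Your proposal is correct and follows essentially the same approach as the paper: both arguments reduce each item to a direct substitution into the multiplication formula \eqref{Eq:Dihedral}, invoke power-associativity to handle the induction for powers in (i), and deduce generation in (iii) from the computations in (i) and the formula $(1,0)\cdot(0,y)=(1,y)$. The only cosmetic difference is that you explicitly verify the subloop structure of $0\times\mathbb Z_n$ before doing the induction, whereas the paper does the induction first and says ``the rest is clear.''
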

\begin{proof}
(i) Since automorphic loops are power-associative, the power $(0,1)^m$ is well-defined for every $m\in\mathbb Z$. The claim holds for $m=0$, since $(0,0)$ is the neutral element of $Q$. Suppose the claim holds for some $m\ge 0$. Then $(0,1)^{m+1} = (0,1)^m\cdot(0,1) = (0,m)\cdot(0,1) = (0,m+1)$. Since $(0,-m)\cdot(0,m)=(0,0)$, it follows that $(0,1)^{-m} = (0,-m)$. The rest is clear.

(ii) For any $x\in\mathbb Z_n$ we have $(1,x)\cdot(1,x) = (0,c(-x+x)) = (0,0)$.

(iii) The formula $(1,0)\cdot(0,y) = (1,y)$ follows immediately from \eqref{Eq:Dihedral}. Then $Q=\langle (0,1),(1,0)\rangle$ follows from (i).
\end{proof}

By Lemma \ref{Lm:DihedralGens}, a loop homomorphism $f:D_{2n}(c)\to Q$ is determined by its values $(0,1)f$, $(1,0)f$. If $n>2$ then $0\times\mathbb Z_n$ is the unique subloop of $D_{2n}(c)$ isomorphic to $\mathbb Z_n$, by Lemma \ref{Lm:DihedralGens}(iii). Hence, if $f:D_{2n}(c)\to D_{2n}(d)$ is an isomorphism, it follows that $(0,1)f=(0,\alpha)$ for some $\alpha\in \mathbb Z_n^*$, and $(1,0)f=(1,\beta)$ for some $\beta\in\mathbb Z_n$. Using Lemma \ref{Lm:DihedralGens} again, we then have
\begin{align*}
    (0,x)f &= ((0,1)^x)f = ((0,1)f)^x = (0,\alpha)^x = (0,x\alpha),\\
    (1,x)f &= ((1,0)\cdot(0,x))f = (1,0)f\cdot(0,x)f = (1,\beta)\cdot(0,x\alpha) = (1,\beta+x\alpha)
\end{align*}
for every $x\in\mathbb Z_n$.

Given any $\alpha\in\mathbb Z_n^*$, $\beta\in\mathbb Z_n$, let us denote the mapping $f:D_{2n}(c)\to D_{2n}(d)$ satisfying $(0,x)f = (0,x\alpha)$, $(1,x)f = (1,\beta+x\alpha)$ for all $x\in\mathbb Z_n$ by $f_{\alpha,\beta}$. (Note that the definition of $f_{\alpha,\beta}$ does not require knowledge of $c$, $d$, so we will consider $f_{\alpha,\beta}$ to be a mapping from $D_{2n}(c)$ to $D_{2n}(d)$ for any $c$, $d\in\mathbb Z_n^*$.)

\begin{lemma}\label{Lm:NearHomomorphism}
Let $c$, $d\in\mathbb Z_n^*$, $\alpha\in Z_n^*$ and $\beta\in \mathbb Z_n$. Then $f=f_{\alpha,\beta}:D_{2n}(c)\to D_{2n}(d)$ is a bijection that satisfies $((0,x)\cdot(0,y))f = (0,x)f\cdot(0,y)f$, $((0,x)\cdot(1,y))f = (0,x)f\cdot(1,y)f$ and $((1,x)\cdot(0,y))f = (1,x)f\cdot(0,y)f$ for every $x$, $y\in\mathbb Z_n$. Moreover, $f$ is an isomorphism if and only if $c=d$.
\end{lemma}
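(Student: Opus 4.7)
The claim has three pieces: bijectivity, the three ``mixed'' multiplicativity identities, and the criterion for $f_{\alpha,\beta}$ to extend to a full homomorphism. Bijectivity is immediate from the definition: $f_{\alpha,\beta}$ preserves the first coordinate, and on each fibre $\{i\}\times \mathbb Z_n$ it is an affine map with invertible linear part $\alpha\in\mathbb Z_n^*$, hence bijective.

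For the three partial multiplicativity claims, I will simply expand both sides using the formula \eqref{Eq:Dihedral}. The crucial observation is that in each of the products $(0,x)(0,y)$, $(0,x)(1,y)$, $(1,x)(0,y)$, the exponent $ij$ equals $0$ in $\mathbb Z_2$, so the automorphism factor $c^{ij}$ (resp.\ $d^{ij}$) collapses to the identity. The multiplication therefore reduces to a purely affine operation in $\mathbb Z_n$ that depends neither on $c$ nor on $d$. For example, $(0,x)\cdot(1,y) = (1,y-x)$ in either loop, and applying $f_{\alpha,\beta}$ to this and comparing with $(0,x\alpha)\cdot(1,\beta + y\alpha) = (1,-x\alpha + \beta + y\alpha)$ gives equality; the other two cases are analogous. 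These identities hold for every choice of $c$ and $d$.

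The decisive case is $(1,x)\cdot(1,y)$, where $ij=1$ and the parameters $c$, $d$ genuinely appear. A direct expansion yields $((1,x)\cdot(1,y))f_{\alpha,\beta} = (0, c(y-x)\alpha)$ on one side and $(1,\beta + x\alpha)\cdot(1,\beta + y\alpha) = (0, d(y-x)\alpha)$ on the other. Since $\alpha$ is invertible in $\mathbb Z_n$, equality for all $x,y$ is equivalent (take $y-x=1$) to $c = d$; conversely, if $c=d$ this case also holds and, combined with the three partial identities already established, shows that $f_{\alpha,\beta}$ is a homomorphism and hence an isomorphism. I expect no genuine obstacle: the whole lemma is a calculation, and the only care needed is to track the $c^{ij}$/$d^{ij}$ factors so that the contrast between the three ``free'' cases and the one discriminating case comes out cleanly.
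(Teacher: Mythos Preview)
Your proposal is correct and follows essentially the same approach as the paper's proof: both argue bijectivity from the invertibility of $\alpha$, verify the three ``easy'' cases by direct expansion (your observation that $ij=0$ collapses the $c^{ij}$ factor is exactly why those cases go through uniformly), and then isolate the $(1,x)\cdot(1,y)$ case to extract the condition $c=d$. The only cosmetic difference is that the paper writes out all three easy cases explicitly rather than doing one and declaring the others analogous.
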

\begin{proof}
Since $\alpha\in\mathbb Z_n^*$, it is clear from the definition of $f=f_{\alpha,\beta}$ that it is a bijection $D_{2n}(c)\to D_{2n}(d)$. For $x$, $y\in\mathbb Z_n$ we have $((0,x)\cdot(0,y))f = (0,x+y)f = (0,(x+y)\alpha) = (0,x\alpha)\cdot(0,y\alpha) = (0,x)f\cdot(0,y)f$, $((0,x)\cdot(1,y))f = (1,-x+y)f = (1,\beta + (-x+y)\alpha) = (0,x\alpha)\cdot(1,\beta+y\alpha) = (0,x)f\cdot(1,y)f$, and $((1,x)\cdot(0,y))f = (1,x+y)f = (1,\beta + (x+y)\alpha) = (1,\beta+x\alpha)\cdot(0,y\alpha) = (1,x)f\cdot(0,y)f$. Finally, we have $((1,x)\cdot(1,y))f = (0,c(-x+y))f = (0,c(-x+y)\alpha)$, while $(1,x)f\cdot (1,y)f = (1,\beta + x\alpha)\cdot(1,\beta+y\alpha) = (0,d(-(\beta+x\alpha)+\beta+y\alpha)) = (0,d(-x+y)\alpha)$, so $f$ is an isomorphism if and only if $c=d$.
\end{proof}

\begin{corollary}\label{Cr:DihedralNonisomorphic}
For an integer $n\ge 2$, the loops $D_{2n}(c)$, $c\in\mathbb Z_n^*$ are pairwise nonisomorphic.
\end{corollary}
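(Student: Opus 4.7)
The plan is to combine Lemma \ref{Lm:NearHomomorphism} with the discussion in the paragraph that immediately precedes it; essentially all of the work has already been done.

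For the boundary case $n=2$, observe that $\mathbb Z_2^* = \{1\}$ is a singleton, so the family $\{D_{2n}(c) : c\in\mathbb Z_n^*\}$ consists of the single loop $D_4(1)$, and the claim is vacuous.

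Now suppose $n > 2$ and let $f: D_{2n}(c)\to D_{2n}(d)$ be a loop isomorphism. The text preceding Lemma \ref{Lm:NearHomomorphism} shows that $f$ is determined by its values on the two generators $(0,1)$ and $(1,0)$ (by Lemma \ref{Lm:DihedralGens}), and that the uniqueness of $0\times\mathbb Z_n$ as a subloop of $D_{2n}(c)$ isomorphic to $\mathbb Z_n$, which holds precisely because $n>2$, forces $(0,1)f = (0,\alpha)$ for some $\alpha\in\mathbb Z_n^*$ and, consequently, $(1,0)f = (1,\beta)$ for some $\beta\in\mathbb Z_n$. In other words, $f = f_{\alpha,\beta}$. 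Applying Lemma \ref{Lm:NearHomomorphism}, which states precisely that $f_{\alpha,\beta} : D_{2n}(c) \to D_{2n}(d)$ is an isomorphism if and only if $c = d$, we conclude $c = d$.

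There is essentially no obstacle: once Lemma \ref{Lm:NearHomomorphism} is in hand alongside the preceding structural analysis of homomorphisms out of $D_{2n}(c)$, the corollary follows in a single line. The only mild subtlety is separating off the trivial case $n=2$, since the hypothesis $n > 2$ is needed in order to pin down $0\times\mathbb Z_n$ as the unique subloop isomorphic to $\mathbb Z_n$.
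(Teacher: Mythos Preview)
Your proof is correct and follows exactly the approach intended by the paper: the corollary is stated without proof precisely because it is an immediate consequence of Lemma \ref{Lm:NearHomomorphism} together with the preceding paragraph showing that any isomorphism $D_{2n}(c)\to D_{2n}(d)$ (for $n>2$) must be of the form $f_{\alpha,\beta}$. Your explicit treatment of the vacuous case $n=2$ is a nice touch that the paper leaves implicit.
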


\begin{proposition}
\label{Pr:AutomorphismGroup}
Let $c\in\mathbb Z_n^*$ and $Q=D_{2n}(c)$. Then $\mathrm{Aut}(Q)$ is isomorphic to the holomorph $\mathrm{Aut}(\mathbb Z_n)\rtimes \mathbb Z_n = \mathbb Z_n^*\rtimes \mathbb Z_n$ with multiplication $(\alpha,\beta)(\gamma,\delta) = (\alpha\gamma,\beta + \alpha\delta)$.
\end{proposition}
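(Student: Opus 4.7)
The plan is to combine the parametrization of loop homomorphisms already set up before Lemma \ref{Lm:NearHomomorphism} with a short composition calculation, restricted to $n>2$ (the cases $n\leq 2$ are degenerate: for $n=2$, $Q\cong\mathbb Z_2\times\mathbb Z_2$ and $\Aut(Q)\cong S_3$, which is in fact the stated holomorph $\{1\}\rtimes\mathbb Z_2$ only when read carefully; the real content is $n>2$, inherited from the preceding hypothesis).

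First, I would recall the discussion immediately before Lemma \ref{Lm:NearHomomorphism}: for $n>2$, Lemma \ref{Lm:DihedralGens}(i),(iii) imply that any $f\in\Aut(Q)$ is determined by $(0,1)f$ and $(1,0)f$, and Lemma \ref{Lm:DihedralGens}(ii) forces $0\times\mathbb Z_n$ to be the unique subloop of $Q$ isomorphic to $\mathbb Z_n$ (since all elements of $1\times\mathbb Z_n$ have order $2$). Hence $(0,1)f=(0,\alpha)$ for some $\alpha\in\mathbb Z_n^*$ and $(1,0)f=(1,\beta)$ for some $\beta\in\mathbb Z_n$, so $f=f_{\alpha,\beta}$. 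Conversely Lemma \ref{Lm:NearHomomorphism}, specialized to $d=c$, guarantees that every $f_{\alpha,\beta}$ with $\alpha\in\mathbb Z_n^*$, $\beta\in\mathbb Z_n$ is an automorphism. Thus the map $\Phi\colon\mathbb Z_n^*\times\mathbb Z_n\to\Aut(Q)$, $(\alpha,\beta)\mapsto f_{\alpha,\beta}$, is a bijection.

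Next, I would compute the composition law using the paper's right-action convention. Directly,
\begin{align*}
(0,x)(f_{\alpha,\beta}f_{\gamma,\delta}) &= (0,x\alpha)f_{\gamma,\delta} = (0,x\alpha\gamma),\\
(1,x)(f_{\alpha,\beta}f_{\gamma,\delta}) &= (1,\beta+x\alpha)f_{\gamma,\delta} = (1,(\delta+\beta\gamma)+x\alpha\gamma),
\end{align*}
so $f_{\alpha,\beta}f_{\gamma,\delta}=f_{\alpha\gamma,\,\delta+\beta\gamma}$. Transported via $\Phi$, the group structure on $\mathbb Z_n^*\times\mathbb Z_n$ is
\[
(\alpha,\beta)\star(\gamma,\delta) = (\alpha\gamma,\;\delta+\beta\gamma).
\]
This is the holomorph of $\mathbb Z_n$, but presented with the roles of the two factors interchanged relative to the statement of the proposition.

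Finally, to match the explicit form $(\alpha,\beta)(\gamma,\delta)=(\alpha\gamma,\beta+\alpha\delta)$ demanded in the proposition, I would exhibit the twist $\psi(\alpha,\beta) = (\alpha^{-1},\beta\alpha^{-1})$ from $(\mathbb Z_n^*\times\mathbb Z_n,\star)$ to the stated holomorph. A direct verification, relying only on commutativity of multiplication in $\mathbb Z_n$ by elements of $\mathbb Z_n^*$, gives $\psi((\alpha,\beta)\star(\gamma,\delta))=\psi(\alpha,\beta)\cdot\psi(\gamma,\delta)$. Composing $\psi\circ\Phi^{-1}$ produces the desired isomorphism $\Aut(Q)\cong \Aut(\mathbb Z_n)\rtimes\mathbb Z_n$.

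No step is genuinely difficult: the content was already packaged in Lemmas \ref{Lm:DihedralGens} and \ref{Lm:NearHomomorphism}. The only mild pitfall is bookkeeping: because the paper composes maps under the right-action convention, naive composition of the $f_{\alpha,\beta}$ yields the \emph{opposite} semidirect product to the one presented in the proposition, and the twist $\psi$ (inverting the $\Aut$-component) is exactly what reconciles the two equivalent presentations of the holomorph.
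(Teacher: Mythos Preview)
Your proof is correct and follows the same route as the paper: parametrize $\Aut(Q)$ by the $f_{\alpha,\beta}$ via Lemmas \ref{Lm:DihedralGens} and \ref{Lm:NearHomomorphism}, then compute the composition law. The only difference is cosmetic bookkeeping. The paper simply composes in the other order, obtaining $f_{\gamma,\delta}f_{\alpha,\beta}=f_{\alpha\gamma,\beta+\alpha\delta}$ directly, so that $(\alpha,\beta)\mapsto f_{\alpha,\beta}$ is an anti-isomorphism onto the stated holomorph (and hence an isomorphism after inversion); you compute $f_{\alpha,\beta}f_{\gamma,\delta}=f_{\alpha\gamma,\delta+\beta\gamma}$ and then supply the explicit twist $\psi(\alpha,\beta)=(\alpha^{-1},\beta\alpha^{-1})$ to land on the stated presentation. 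Your remark that the argument as written needs $n>2$ is well taken and matches the paper's own implicit restriction in the discussion preceding Lemma \ref{Lm:NearHomomorphism}.
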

\begin{proof}
By the discussion preceding Lemma \ref{Lm:NearHomomorphism}, every automorphism of $Q$ is of the form $f_{\alpha,\beta}$ for some $\alpha\in\mathbb Z_n^*$, $\beta\in\mathbb Z_n$. By Lemma \ref{Lm:NearHomomorphism}, every such mapping $f_{\alpha,\beta}$ is an automorphism of $Q$. Now, if $\gamma\in\mathbb Z_n^*$, $\delta\in\mathbb Z_n$ and $x\in\mathbb Z_n$, we have $(0,x)f_{\gamma,\delta}f_{\alpha,\beta} = (0,x\gamma)f_{\alpha,\beta} = (0,x\gamma\alpha) = (0,x\alpha\gamma) = (0,x)f_{\alpha\gamma,\beta+\alpha\delta}$ and $(1,x)f_{\gamma,\delta}f_{\alpha,\beta} = (1,\delta+x\gamma)f_{\alpha,\beta} = (1,\beta+(\delta +x\gamma)\alpha) = (1,\beta+\alpha\delta + x\alpha\gamma) = (1,x)f_{\alpha\gamma,\beta+\alpha\delta}$.
\end{proof}

Results analogous to \ref{Lm:DihedralGens}--\ref{Pr:AutomorphismGroup} hold for the infinite dihedral automorphic loops $D_\infty(c)$, with every occurrence of $\mathbb Z_n$ replaced with $\mathbb Z$, and $2n$ replaced with $\infty$.

Commutative automorphic loops with middle nuclei of index $2$ were studied in detail in \cite{JKV2}. In the next result we examine the noncommutative case under the assumption that the middle nucleus is cyclic.

\begin{proposition}
\prplabel{index2}
Let $Q$ be a noncommutative automorphic loop with cyclic middle nucleus $N_{\mu}(Q) = \langle b\rangle$, and suppose that $Q$ is a split middle nuclear extension $Q = \langle a\rangle \langle b\rangle$ where $a^2 = 1$. If $Q$ is infinite, then $Q \cong D_{\infty}(c)$ for some $c\in \{\pm 1\}$. If $Q$ is finite, then $Q \cong D_{2n}(c)$ for some $n\in \mathbb{N}$ and some $c\in \mathbb{Z}_n^*$.
\end{proposition}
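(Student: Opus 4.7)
The plan is to apply Corollary \corref{split_mult} with the transversal $S = \langle a\rangle = \{1,a\}$ of $N_\mu = \langle b\rangle$. Every element of $Q$ then has a unique representation $a^{\epsilon}b^{j}$ with $\epsilon\in\{0,1\}$. Letting $A = \langle b\rangle$ (cyclic, so $A \cong \mathbb Z$ if $Q$ is infinite and $A \cong \mathbb Z_n$ with $n = |b|$ if $Q$ is finite), I identify $Q$ bijectively with $\mathbb Z_2\times A$. Corollary \corref{split_mult} yields
\[
(a^{\epsilon_1}b^j)(a^{\epsilon_2}b^k) = a^{\epsilon_1+\epsilon_2}\cdot \bigl(((b^j)T_{a^{\epsilon_2}}\cdot b^k)L_{a^{\epsilon_2},a^{\epsilon_1}}\bigr).
\]
Since $T_{a^0} = L_{a^0,a^\delta} = L_{a^\delta,a^0} = \id$, setting $\phi := T_a|_A$ and $\alpha := L_{a,a}|_A$ collapses this to
\[
(a^{\epsilon_1}b^j)(a^{\epsilon_2}b^k) = a^{\epsilon_1+\epsilon_2}\cdot\alpha^{\epsilon_1\epsilon_2}\bigl(\phi^{\epsilon_2}(b^j)\cdot b^k\bigr).
\]
Both $\phi$ and $\alpha$ lie in $\Aut(A)$, since $N_\mu\normal Q$ and inner mappings of $Q$ are automorphisms of $Q$; moreover $a = a^{-1}$ together with Lemma \lemref{RxyLxy} give $T_a^2 = \id$, so $\phi$ is an involution of $A$.

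The crucial step is to show that $\phi$ acts on $A$ as inversion. First, noncommutativity of $Q$ rules out $\phi = \id_A$: otherwise $T_a$ would fix $\langle b\rangle$ pointwise, and combined with $T_a(a) = a$ one would get $T_a = \id_Q$, putting $a$ into the commutant of $Q$ and, via the multiplication formula, forcing $Q$ to be commutative. When $Q$ is infinite, $A \cong \mathbb Z$ with $\Aut(\mathbb Z) = \{\pm\id\}$, so the only nontrivial involution is $-\id$ and $\phi = -\id$ follows at once. When $Q$ is finite, $\Aut(\mathbb Z_n)$ can harbor several nontrivial involutions, and pinning $\phi$ down to $-\id$ among them is the main obstacle; the strategy is to exploit the hypothesis $N_\mu(Q) = \langle b\rangle$ (which, by itself, forces $\alpha \ne \id_A$, for otherwise the collapsed multiplication would be associative and $N_\mu$ would coincide with $Q$) together with the requirement that additional inner mappings such as $L_{a,b}$ act as loop automorphisms, which will yield a relation that singles out $\phi(b) = b^{-1}$.

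Once $\phi(b^j) = b^{-j}$ has been established, the multiplication formula specializes to
\[
(\epsilon_1,j)(\epsilon_2,k) = \bigl(\epsilon_1+\epsilon_2,\ \alpha^{\epsilon_1\epsilon_2}((-1)^{\epsilon_2}j + k)\bigr),
\]
which is exactly the definition of $\Dih(A,\alpha)$. Writing $\alpha$ as multiplication by $c \in \{\pm 1\}$ when $A = \mathbb Z$, or by a unit $c \in \mathbb Z_n^*$ when $A = \mathbb Z_n$, yields $Q \cong D_\infty(c)$ or $Q \cong D_{2n}(c)$ respectively, as required.
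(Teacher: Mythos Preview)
Your overall strategy coincides with the paper's: both invoke Corollary~\corref{split_mult} to write the multiplication in terms of $\phi = T_a|_{\langle b\rangle}$ and $\alpha = L_{a,a}|_{\langle b\rangle}$, then argue that $\phi$ is inversion on $\langle b\rangle$, and finally identify $Q$ with a dihedral automorphic loop via $(i,j)\mapsto a^ib^j$. Your argument that $\phi=\id$ would force $Q$ to be commutative is exactly the paper's.

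The genuine gap in your proposal is precisely where you locate it: you do not actually prove $\phi(b)=b^{-1}$ in the finite case. You correctly observe that $\Aut(\mathbb Z_n)$ can contain involutions other than $\pm\id$, and you outline a strategy (exploit $N_\mu(Q)=\langle b\rangle$, hence $\alpha\ne\id$, and extract a relation from the automorphism condition on $L_{a,b}$), but you never carry it out; the crucial step is left as a promissory note. The paper does not take your proposed route. It argues directly: $T_a^2=\id_Q$ by \eqnref{Tinv} (since $a=a^{-1}$), and by normality $T_a$ restricts to an involutive automorphism of $\langle b\rangle$; the paper then asserts $(b)T_a\in\{b,b^{-1}\}$ and eliminates $(b)T_a=b$ via the noncommutativity argument you also give. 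With $\phi$ thus determined, the only remaining datum is $\alpha=L_{a,a}|_{\langle b\rangle}$, which is multiplication by some $c\in\{\pm 1\}$ or $c\in\mathbb Z_n^*$, and \eqnref{split_mult} then specializes to \eqnref{index2}, yielding the isomorphism. So your proposal is incomplete because you neither follow the paper's short step nor finish the alternative strategy you sketch.
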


\begin{proof}
Since $T_a^2 = \id_Q$ by Lemma \lemref{RxyLxy} (see \eqnref{Tinv}), we must have $(b)T_a = b$ or $(b)T_a = b\inv$ by the normality of $\langle b\rangle$ in $Q$. If the former situation holds, then $(a^i b^j)T_a = a^i b^j$ for all $i=0$, $1$ and all $j$ since $T_a$ is an automorphism. Therefore $T_a$ fixes every point of $Q$ and hence $a\in C(Q)$. It follows that $(a^i b^j)T_b = a^i b^j$ for all $i=0$, $1$ and all $j$ since $T_b$ is an automorphism. Thus $b\in C(Q)$. Therefore $C(Q) = Q$, that is, $Q$ is commutative, a contradiction. It follows that $(b)T_a = b\inv$.

We have that $T_1 = L_{1,1} = L_{1,a} = L_{a,1} =  \id_Q$, and so referring to \eqnref{split_mult}, we see that the multiplication in $Q$ is entirely determined by the automorphism $L_{a,a} \upharpoonright \langle b\rangle$. If $\langle b\rangle \cong \mathbb{Z}$, then $\Aut(\langle b\rangle) \cong \mathbb{Z}^* = \{\pm 1\}$ and there are two possible values for $L_{a,a} \upharpoonright \langle b\rangle$ determined by $(b)L_{a,a} = b^c$ where $c = \pm 1$. If $\langle b\rangle \cong \mathbb{Z}_n$, then $\Aut(\langle b\rangle) \cong \mathbb{Z}_n^*$, and the possible values for $L_{a,a} \upharpoonright \langle b\rangle$ are given by $(b)L_{a,a} = b^{c}$ where $c\in \mathbb{Z}_n^*$. In either case, we thus have $(b)L_{a^i,a^k} = b^{c^{ik}}$ for $i, k = 0,1$.

Fixing $c\in \mathbb{Z}^*$ or $\mathbb{Z}_n^*$, it follows from the preceding discussion that \eqnref{split_mult} specializes to the present setting as follows:
\begin{equation}
\eqnlabel{index2}
a^i b^j \cdot a^k a^{\ell} = a^{i+k} b^{c^{ij}((-1)^k j + \ell)}\,,
\end{equation}
for all $i$, $k\in \mathbb{Z}_2$, $j$, $\ell\in \mathbb{Z}$ or $\mathbb{Z}_n$. Finally, for $m = \infty$ or $2n$, define $\psi : D_m(c)\to Q$ by $(i,j)\psi = a^i b^j$. It is straightforward to check that $\psi$ is an isomorphism using \eqref{Eq:Dihedral} and \eqnref{index2}.
\end{proof}

As an application, we have the following classification results.

\begin{theorem}
\thmlabel{order2modd}
Let $Q$ be a finite automorphic loop with a cyclic subgroup of odd order $n$ and of index $2$. Then either $Q$ is a cyclic group or $Q\cong D_{2n}(c)$ for some $c\in \mathbb{Z}_n^*$.
\end{theorem}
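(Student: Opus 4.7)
The plan is to verify the hypotheses of Proposition \prpref{index2} and invoke it. Let $H = \langle b\rangle$ with $|b| = n$.

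By Corollary \corref{even}, $Q$ contains an element $a$ of order $2$; since $|a|=2$ is coprime to $|H|=n$, $a\notin H$, and both $H$ and $aH$ have $n$ elements with $H\cap aH=\emptyset$, so $Q=H\sqcup aH$ as a set. For any $x\notin H$ the bijection $L_x$ must send $H$ into $Q\setminus H$ (otherwise $xh\in H$ with $h\in H$ forces $x\in H$), so $L_x(H)=Q\setminus H$ and therefore $L_x(Q\setminus H)=H$. Hence the map $\epsilon:Q\to\mathbb Z_2$ sending $H$ to $0$ and its complement to $1$ is a loop homomorphism, exhibiting $H$ as a normal subloop of $Q$. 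Moreover, if some $x\in Q\setminus H$ had odd order $d$, then $\epsilon(x^d)=d\epsilon(x)=1\ne 0=\epsilon(1)$, a contradiction; therefore $H$ is precisely the set of odd-order elements of $Q$, hence characteristic and the unique subloop of order $n$.

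The main step, which I expect to be the principal obstacle, is to show $H\subseteq N_{\mu}(Q)$. Since $N_{\mu}(Q)$ is a normal subloop of $Q$ by Proposition \prpref{nuclei}, its order divides $|Q|=2n$; once the inclusion is established it forces $N_{\mu}(Q)\in\{H,Q\}$, and the case $N_{\mu}(Q)=Q$ implies that $Q$ is a group (the middle associativity law $(xa)y=x(ay)$ is just full associativity after renaming the middle variable). The argument for $H\subseteq N_{\mu}(Q)$ should exploit that $H$ is characteristic, so each inner mapping restricts to an element of $\Aut(H)=\mathbb Z_n^*$; that $T_b|_H=\id_H$ since $H$ is a cyclic group; and that $T_a^2=\id_Q$ by \eqnref{Tinv}, which together with $T_a(H)=H$ places strong constraints on how $b$ interacts with elements of $aH$, forcing each $b^k$ to associate in the middle with arbitrary products of $Q$.

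Once $H\subseteq N_{\mu}(Q)$ is secured, $Q=\langle a\rangle H$ with $\langle a\rangle\cap H=1$ and $a^2=1$ makes $Q$ a split middle nuclear extension by Proposition \prpref{split}. If $Q$ is noncommutative, Proposition \prpref{index2} immediately gives $Q\cong D_{2n}(c)$ for some $c\in\mathbb Z_n^*$. If instead $Q$ is commutative, the direct-product decomposition of finite commutative automorphic loops from \cite{JKV1} writes $Q=Q_{\mathrm{odd}}\times Q_2$ with $|Q_{\mathrm{odd}}|=n$ and $|Q_2|=2$; since $H$ is the unique subloop of order $n$, we must have $H=Q_{\mathrm{odd}}$ cyclic, and $\gcd(2,n)=1$ yields $Q\cong\mathbb Z_n\times\mathbb Z_2\cong\mathbb Z_{2n}$, a cyclic group, completing the dichotomy.
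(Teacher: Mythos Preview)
Your outline matches the paper's strategy closely: reduce to Proposition \prpref{index2} by showing that $H=\langle b\rangle$ lies in the middle nucleus, and handle the commutative case via \cite{JKV1}. The surrounding logic is fine.

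The gap is that you do not actually prove $H\subseteq N_\mu(Q)$. You correctly flag this as the principal obstacle, but the ingredients you list---$H$ characteristic, $T_b|_H=\id_H$, $T_a^2=\id_Q$---do not by themselves yield middle associativity. Knowing that each inner mapping restricts to some element of $\mathbb Z_n^*$ on $H$ tells you nothing about whether triples $(x,b^k,y)$ associate; that is a statement about how specific inner mappings of the form $L_{b^k,x}$ and $R_{b^k,y}$ act, and your constraints do not pin these down. The paper's argument here is genuinely nontrivial and does not follow from the soft facts you cite: it first uses the twisted-subgroup identity \eqnref{PRnorm} (namely $R_{x^{-1}}P_{xy}=P_yL_x$) together with $P_a=T_a$ (since $a^2=1$) to prove the key relation
\[
bab=a,
\]
and then bootstraps from this single identity, using that specific inner mappings such as $L_aR_bL_a^{-1}R_b^{-1}$, $L_{b^k}L_aL_{ab^k}^{-1}$, and $R_{b^k}R_{a^\ell b^r}R_{b^k\cdot a^\ell b^r}^{-1}$ are automorphisms fixing a generator (hence fixing everything), to obtain successively $ab^k\cdot b^r=ab^{k+r}$, then $a(b^k\cdot a^\ell b^r)=ab^k\cdot a^\ell b^r$, and dually on the left, until the full middle-nucleus condition is verified. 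Without something like the $P$-map machinery to produce $bab=a$, the argument does not get off the ground.
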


\begin{proof}
Let $\langle b\rangle$ be a cyclic subgroup of order $n$. This subloop is normal in $Q$ since it has index $2$. By Corollary \corref{even}, $Q$ also has an element $a$ of order $2$. By \eqnref{Tinv}, $(b)T_a = b$ or $(b)T_a = b\inv$, and by the same argument as in the proof of Proposition \prpref{index2}, we see that the former case leads to $Q$ being commutative. If $Q$ is commutative, then by \cite[Thm. 5.1]{JKV1}, $Q$ is isomorphic to the direct product $\mathbb{Z}_2 \times \mathbb{Z}_n\cong \mathbb{Z}_{2n}$. Thus we assume from now on that $Q$ is noncommutative, and so $(b)T_a = b\inv$.

It remains to show that $\langle b\rangle$ is the middle nucleus of $Q$. Since $Q$ is the disjoint union of $\langle b\rangle$ and $a\langle b\rangle$, every element of $Q$ has a unique representation in the form $a^i b^j$, $i = 0,1$, $0\leq j < n$. Thus to show $\langle b\rangle \subseteq N_\mu(Q)$, we must show $(a^i b^j\cdot b^k)\cdot a^{\ell} b^r =
a^i b^j\cdot (b^k\cdot a^{\ell} b^r)$ for all $0\leq i$, $\ell \leq 1$, $0\leq j$, $k$, $r < n$.

Our first step is to prove
\begin{equation}
\eqnlabel{basecase}
bab = a\,.
\end{equation}
Set $c = b^{(n+1)/2}$ so that $c^2 = b$.
We use \eqnref{PRnorm} to get $(x\inv)P_{xy} = (x\inv)R_{x\inv}\inv P_y L_x = xy^2$ for all $x$, $y\in Q$. Take $x = a\rdiv c$ and $y = c$ in this to get $(a\rdiv c)b = (a\rdiv c)c^2 = [(a/c)\inv ]P_a = [(a/c)\inv ]T_a$ because $P_a = T_a$ since $a^2 = 1$. We record this as $(a\rdiv c)b = [(a\rdiv c)\inv ]T_a$, and use this identity twice in the following:
\begin{align*}
b\cdot(a\rdiv c)b &= b\cdot [(a/c)\inv ]T_a = (b\inv)T_a \cdot [(a/c)\inv ]T_a = [b\inv (a\rdiv c)\inv]T_a\\
 &= [((a\rdiv c)b)\inv]T_a = [((a\rdiv c)b)T_a]\inv = (((a\rdiv c)\inv)T_a T_a)^{-1} = a\rdiv c,
\end{align*}
where we also used $T_a\in \Aut(Q)$ in the third and fifth equalities, and AAIP in the fourth. Hence $aR_c = aR_c R_b L_b = aR_b L_b R_c$ by Proposition \prpref{commutes}. Canceling, we obtain \eqnref{basecase}.

Recall that we work under the assumption $(b)T_a= b^{-1}$. By \eqnref{basecase}, we have $a = bab = (a\cdot (b)T_a)b = ab\inv\cdot b$. Thus the automorphism $L_a R_b L_a\inv R_b\inv$ fixes $b\inv$ and hence fixes each $b^k$, that is, $ab^k\cdot b = ab^{k+1}$ for $0\leq k < n$. Then the automorphism $L_{b^k} L_a L_{ab^k}\inv$ fixes $b$ and hence fixes each $b^r$, that is, $ab^k\cdot b^r = ab^{k+r}$ for $0\leq k, r < n$. Since $ab^k \cdot a = a\cdot b^ka$ (by Proposition \prpref{commutes}), $L_{b^k} L_a L_{ab^k}\inv$ also fixes $a$ and hence fixes each $a^{\ell}b^r$, that is, $ab^k\cdot a^{\ell}b^r = a(b^k\cdot a^{\ell}b^r)$ for $0\le \ell \le 1$, $0 \leq k,r < n$.

On the other hand, by \eqnref{basecase} again, we have $a = bab = b((b)T_a\inv\cdot a) = b((b)T_a\cdot a) = b\cdot b\inv a$. Dualizing the arguments of the preceding paragraph, we get $b^j (b^k\cdot a^{\ell}b^r) = b^{j+k}\cdot a^{\ell}b^r$ for $\ell = 0,1$, $0\leq j,k,r < n$. Combining this with the preceding paragraph, we see that $R_{b^k} R_{a^{\ell}b^r} R_{b^k\cdot a^{\ell}b^r}\inv$ fixes both $a$ and each $b^j$. It follows that $(a^i b^j\cdot b^k)\cdot a^{\ell} b^r = a^i b^j\cdot (b^k\cdot a^{\ell} b^r)$ for $0\leq i,\ell\leq 1$, $0\leq j,k,r < n$, as desired.

We have shown that $\langle b\rangle \subseteq N_\mu(Q)$. If $ab^i\in N_\mu(Q)$ for any $i$, then $a\in N_\mu(Q)$ since $N_\mu(Q)$ is a subloop. But then $Q = N_\mu(Q)$, a contradiction. Therefore $\langle b\rangle = N_\mu(Q)$. By Proposition \prpref{index2}, we have the desired result.
\end{proof}

Recently, P. Cs\"{o}rg\H{o} was able to establish the following result by group-theoretic means:

\begin{theorem}[Elementwise Lagrange Theorem \cite{Csorgo2}]
\thmlabel{elementwise_Lagrange}
Let $Q$ be a finite automorphic loop and let $a\in Q$. Then the order of $a$ divides the order of $Q$.
\end{theorem}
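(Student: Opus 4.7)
My plan is to proceed by induction on $|Q|$. Let $n$ denote the order of $a$.

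For the inductive step, suppose $Q$ admits a nontrivial proper normal subloop $N \unlhd Q$. Both $N$ and $Q/N$ are automorphic loops of order strictly less than $|Q|$, since automorphic loops form a variety. Let $m$ denote the order of the coset $aN$ in $Q/N$; then $m$ divides $n$, the element $a^m$ lies in $N$, and $a^m$ has order $n/m$ in $N$. The inductive hypothesis gives $m \mid |Q/N|$ and $n/m \mid |N|$, so $n \mid |Q/N| \cdot |N| = |Q|$.

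This reduces the problem to the case where $Q$ is simple. If $Q$ is associative, it is a simple group and classical Lagrange applies. If $Q$ has odd order, Theorem \thmref{lagrange} applied to the cyclic subloop $\langle a\rangle$ gives $n \mid |Q|$. The remaining case is a simple nonassociative automorphic loop of even order, whose existence is an open problem (Section \secref{simplicity}). For this case I would work inside $G = \Mlt(Q)$ with point stabilizer $H = \Inn(Q)$, so that $|Q| = [G:H]$, and look for a cyclic subgroup $C \le G$ of order $n$ that acts semiregularly on $Q$ --- equivalently, $C \cap gHg^{-1} = 1$ for every $g \in G$ --- which by orbit-counting forces $n \mid |Q|$. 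The natural candidates are $\langle L_a\rangle$ and $\langle P_a\rangle$, where $P_a$ satisfies $P_a^n = P_{a^n} = \id_Q$ by Corollary \corref{Ppowers}, and hence already has order dividing $n$.

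The main obstacle is exactly this semiregularity. Unlike in groups or in diassociative loops, $L_a^n$ is not a priori the identity in $G$: the iterated left product $a(a(\cdots(ax)\cdots))$ at an arbitrary $x$ is not forced to equal $x$ by power-associativity alone, and the order of $L_a$ in $G$ can be a proper multiple of $n$. Csörgő's argument presumably circumvents this by working simultaneously with $L_a$, $R_a$, and inner mappings, exploiting the two connected transversals $\{L_x\}_{x\in Q}$ and $\{R_x\}_{x\in Q}$ of $H$ in $G$ together with the automorphic hypothesis $H \le \Aut(Q)$, in order to manufacture a cyclic subgroup of order $n$ that does act freely on $Q$. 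The primitivity of $G$ from Proposition \prpref{albert} and the further structural constraints on simple automorphic loops derived in Section \secref{simplicity} would feed into the fine analysis of this case.
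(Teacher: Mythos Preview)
The paper does \emph{not} prove this theorem. It is stated as a result of Cs\"org\H{o} \cite{Csorgo2}, introduced by the sentence ``Recently, P.~Cs\"org\H{o} was able to establish the following result by group-theoretic means,'' and then used as a black box in the classification of automorphic loops of order $2p$. So there is no ``paper's own proof'' against which to compare your argument.

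Your reduction to the simple case is correct: automorphic loops are power-associative, so the order of $aN$ in $Q/N$ divides $n$, $a^m\in N$ has order $n/m$, and the inductive step goes through. The associative and odd-order simple cases are also dispatched correctly.

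The genuine gap is the one you identify yourself: the case of a simple nonassociative automorphic loop of even order. You do not close it; you only speculate about a semiregular cyclic subgroup of $\Mlt(Q)$ and explicitly note the obstacle that neither $\langle L_a\rangle$ nor $\langle P_a\rangle$ is known to have order $n$ or act freely. This is the entire content of the theorem---if one could assume such loops do not exist, the result would already follow from Theorems~\thmref{lagrange} and~\thmref{ft} together with group Lagrange. Cs\"org\H{o}'s proof, being unconditional, cannot rest on the nonexistence of such loops; it must work directly in $\Mlt(Q)$ without first reducing to simple, or handle that case by means not sketched here. As written, your proposal is an outline that stops exactly where the real work begins.
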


\begin{corollary} [Automorphic loops of order $2p$]
\corlabel{order2p}
Let $Q$ be an automorphic loop of order $2p$ where $p$ is an odd prime. Then $Q\cong D_{2p}(c)$ for some integer $1 \le c < p$, or $Q\cong \mathbb Z_{2p}$. Thus there are precisely $p$ automorphic loops of order $2p$, including the cyclic group $\mathbb Z_{2p}$ and the dihedral group $D_{2p}$.
\end{corollary}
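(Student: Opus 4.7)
The plan is to combine the elementwise Lagrange theorem (Theorem~\thmref{elementwise_Lagrange}) with the structural classification already obtained in Theorem~\thmref{order2modd}. By elementwise Lagrange, every element of $Q$ has order dividing $2p$, i.e.\ order $1$, $2$, $p$, or $2p$. If some element $x$ has order $2p$, then $\langle x\rangle$ is a cyclic subgroup of order $|Q|$, so $Q=\langle x\rangle\cong\mathbb Z_{2p}$ and we are done. Otherwise, every nonidentity element of $Q$ has order $2$ or $p$.

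The crux is then to produce an element of order $p$. If every nonidentity element of $Q$ had order $2$, then the inversion map $J$ would be the identity, and the AAIP (Proposition~\prpref{AAIP}) would force $xy=(y^{-1}x^{-1})^{-1}=yx$, making $Q$ commutative of exponent $2$. By \cite[Thm.~6.2]{JKV1}, $|Q|$ would then be a power of $2$, contradicting $|Q|=2p$ with $p$ an odd prime. Hence some element $b\in Q$ has order $p$, and $\langle b\rangle$ is a cyclic subgroup of odd order $n=p$ and index $2$ in $Q$. Theorem~\thmref{order2modd} now applies and gives that $Q$ is either cyclic, in which case $Q\cong\mathbb Z_{2p}$, or $Q\cong D_{2p}(c)$ for some $c\in\mathbb Z_p^*$.

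Finally, the $p$ loops thus produced are pairwise nonisomorphic: the $p-1$ dihedral loops $D_{2p}(c)$ ($c\in\mathbb Z_p^*$) are pairwise nonisomorphic by Corollary~\ref{Cr:DihedralNonisomorphic}, and none of them is isomorphic to $\mathbb Z_{2p}$, since $D_{2p}(1)$ is the usual (nonabelian) dihedral group of order $2p$, while for $c\neq 1$ the loop $D_{2p}(c)$ has proper middle nucleus $\{0\}\times\mathbb Z_p$ and hence is nonassociative. Combined with $\mathbb Z_{2p}$ itself, this gives exactly $p$ isomorphism classes. The main obstacle is the middle step: elementwise Lagrange alone does not force an element of order $p$ to exist, and one must exploit the AAIP together with the commutative $p$-loop classification to rule out the exponent-$2$ possibility before Theorem~\thmref{order2modd} can be invoked.
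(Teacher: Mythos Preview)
Your proof is correct and follows essentially the same approach as the paper: use elementwise Lagrange to bound element orders, rule out exponent $2$ via a ``$2$-power order'' result, produce an element of order $p$, and then invoke Theorem~\thmref{order2modd} and Corollary~\ref{Cr:DihedralNonisomorphic}. The only cosmetic differences are that the paper cites \cite[Thm.~8]{GKN} directly for the exponent-$2$ step (rather than first deducing commutativity from AAIP and then citing \cite[Thm.~6.2]{JKV1}), and that the paper does not separately treat the order-$2p$-element case nor spell out why the $D_{2p}(c)$ are not cyclic, leaving the latter implicit.
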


\begin{proof}
By Corollary \corref{even}, $Q$ has an element $a$ of order $2$. If every element of $Q$ had order $2$, then by \cite[Thm. 8]{GKN}, $Q$ itself would have order a power of $2$, a contradiction. By Theorem \thmref{elementwise_Lagrange}, every element of $Q$ has order dividing $2p$. Thus $Q$ must have an element $b$ of order $p$. Since $\langle a\rangle \cap \langle b\rangle = 1$, the desired isomorphism now follows from Theorem \thmref{order2modd}.

The $p-1$ dihedral automorphic loops $D_{2p}(c)$, $c\in\mathbb Z_p^*$ are pairwise nonisomorphic by Corollary \ref{Cr:DihedralNonisomorphic}.
\end{proof}

\section{Open Problems}
\seclabel{problems}

The main open problem in the theory of automorphic loops is the following:

\begin{problem}
\prblabel{simple}
Does there exist a (finite) simple, nonassociative automorphic loop?
\end{problem}

Also open are the Lagrange, Cauchy, Sylow and Hall theorems.

\begin{problem}
\prblabel{lagrange}
Let $Q$ be a finite automorphic loop and let $S\leq Q$. Does $|S|$ divide $|Q|$?
\end{problem}

\begin{problem}
\prblabel{sylow}
Let $Q$ be a finite automorphic loop.
\begin{enumerate}
\item[(i)] For each prime $p$ dividing $|Q|$, does $Q$ have an element of order $p$?
\item[(i)] For each prime $p$ dividing $|Q|$, does $Q$ have a Sylow $p$-subloop?
\item[(i)] If $Q$ is solvable and if $\pi$ is a set of primes, does $Q$ have a Hall $\pi$-subloop?
\end{enumerate}
\end{problem}

\begin{acknowledgment}
Our investigations were aided by the automated deduction tool \textsc{Prover9} and the finite model builder \textsc{Mace4}, both developed by McCune \cite{McCune}, and the LOOPS package \cite{LOOPS} for GAP \cite{GAP}. We thank Ian Wanless for the idea behind the proof of Lemma \lemref{odd-2div}(ii). We thank G\'{a}bor Nagy for remarking that standard group theory facts about characteristic subgroups should hold for characteristic subloops of automorphic loops.
\end{acknowledgment}

\end{document}